\documentclass[12pt,oneside]{amsart}
\usepackage{epsf}
\usepackage{amsmath,amscd}
\usepackage{amsmath, latexsym, amsfonts, amssymb, amsopn, amscd, mathtools, esvect, bigints, graphicx, float, indentfirst, inputenc, amssymb, pdfpages, multirow, multicol, physics, url,alltt, mathrsfs, amsthm, tikz-cd,tikz, pgfplots}
\usepackage{import, xifthen, transparent}
\usepackage{etoolbox, adjustbox, natbib}
\usepackage{enumitem, stmaryrd, subfiles, young, appendix}
\usetikzlibrary{calc}
\usepackage{subcaption}
\usepackage{hyperref,empheq}

\numberwithin{figure}{section}

\newsavebox{\savepar}

\theoremstyle{plain}
\newtheorem{thm}{Theorem}[section]
\newtheorem*{thm*}{Theorem}
\newtheorem{Def}[thm]{Definition}
\newtheorem{prop}[thm]{Proposition}
\newtheorem{Ex}[thm]{Example}
\newtheorem{cor}[thm]{Corollary}
\newtheorem{lem}[thm]{Lemma}
\newtheorem{cl}[thm]{Claim}

\newtheorem{rem}[thm]{Remark}

\newtheorem{prob*}{Problem}

\newcommand{\EE}{\mathcal{E}}

\newcommand{\FF}{\mathcal{F}}

\newcommand{\II}{\mathcal{I}}

\newcommand{\RRR}{\mathbb{R}}

\newcommand{\CCC}{\mathbb{C}}
\newcommand{\ZZZ}{\mathbb{Z}}

\renewcommand{\tilde}{\widetilde}

\renewcommand{\bar}{\overline}

\makeatletter
\renewcommand{\xleftrightarrow}[2][]{\ext@arrow 9999{\longleftrightarrowfill@}{#1}{#2}}
\newcommand{\longleftrightarrowfill@}{\arrowfill@\leftarrow\relbar\rightarrow}
\makeatother

\newcommand*\widefbox[1]{\fbox{\hspace{2em}#1\hspace{2em}}}

\newcommand{\im}{\mathrm{im}\,}

\newcommand{\Gr}{\operatorname{Gr}}

\newcommand{\diag}{\operatorname{diag}}

\makeatletter
\renewcommand\subsection{\@startsection{subsection}{3}{\z@}%
	{-3.25ex\@plus -1ex \@minus -.2ex}%
	{1.5ex \@plus .2ex}
	{\normalfont\normalsize\bfseries}}
\makeatother

\makeatletter
\renewcommand\subsubsection{\@startsection{subsubsection}{3}{\z@}%
	{-3.25ex\@plus -1ex \@minus -.2ex}%
	{1.5ex \@plus .2ex}
	{\normalfont\normalsize\bfseries}}
\makeatother

\pgfplotsset{compat=1.18}

\begin{document}
\title{Principal Minors and Absolute Values}

\author[F. Villacis]{Francisco Villacis}
\address{Department of Pure Mathematics\\ University of Waterloo\\
Waterloo, Ontario \; N2L 3G1 \\Canada}
\email{fvillacis@uwaterloo.ca}

\thanks{The author was partially supported by a Queen Elizabeth II Graduate Scholarship in Science and Technology (QEII-GSST)}

\begin{abstract}
	We prove that the principal minors of an orthogonal projection matrix determine the coordinatewise absolute value of its image. More precisely, if $W_1,W_2\subset \mathbb{C}^n$ are subspaces whose orthogonal projection matrices have equal corresponding principal minors of all orders, then $|W_1|=|W_2|$, where $|W|$ is the image of $W$ under the coordinatewise absolute value map. The proof is divided into different cases related to the connectivity of the associated matroids. We provide a stratification of the Grassmannian in terms of the connectivity levels and give a structure theorem for the elements of each stratum.
\end{abstract}

\maketitle


\section{Introduction}\label{sec:Intro}
	Given an $n\times n$ orthogonal projection matrix $A$, we uniquely obtain a subspace $W\coloneqq \im A\subset \CCC^n$ and we can consider the set
    \[
        |W|=\left\{(|w_1|,\dots ,|w_n|)\in \RRR^n_{\ge 0}: (w_1,\dots, w_n)\in W\right\},
    \]
    where $|w_i|$ is the absolute value of the $w_i\in \CCC$.
    In this paper we provide a proof of the following theorem.
    
	\begin{thm}
    	If $A$ and $B$ are two $n\times n$ orthogonal projection matrices that have the same corresponding principal minors of all orders, then
        \[
            |\im A|=|\im B|.
        \]
        \label{thm:same-principal-minors-have-equal-image}
    \end{thm}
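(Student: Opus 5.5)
Throughout, let $W_1=\im A$ and $W_2=\im B$.

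The plan rests on a classical fact about Hermitian matrices with equal principal minors: such matrices are related by diagonal unitary similarity and transposition on blocks. Since $A$ and $B$ are Hermitian projections, equal principal minors of orders $1$ and $2$ give
\[
	a_{ii}=b_{ii}, \qquad |a_{ij}|^2=|b_{ij}|^2 .
\]
The $3\times 3$ minors then determine $\operatorname{Re}(a_{ij}a_{jk}a_{ki})$ for every triangle. Higher minors determine the real parts of the longer cycle products $a_{i_1i_2}a_{i_2i_3}\cdots a_{i_ki_1}$, modulo lower-order terms.

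First I will reduce to the case where the support graph $G$ of $A$ (edges $ij$ with $a_{ij}\neq 0$) is connected. If $A$ decomposes into diagonal blocks after permutation, then $W_1$ is the direct sum of the blocks' images. Moreover $|W_1|$ is the product of the blocks' absolute-value images, since phases in different coordinate blocks can be chosen independently. The same decomposition holds for $B$, because the supports coincide, and minors of a block-diagonal matrix factor over the blocks. For connected $G$ I will interpret the connectivity via the matroid of $W$: $A$ is block diagonal precisely when $W$ splits as a direct sum along coordinates, i.e. the matroid is disconnected. I expect to reuse whatever stratification result by connectivity level the paper gives.

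The key structural step is to show that, within a connected component, either $B=DAD^{*}$ or $B=D\bar A D^{*}$ for a diagonal unitary $D$. By the Engel--Schneider/Loewy type results on Hermitian matrices with equal principal minors, this holds when the matrix is irreducible and has no "cut" forcing a partial transposition. The more general statement allows $B$ to be obtained by conjugating only some blocks in a suitable 2-connected decomposition. Both conclusions finish the proof whenever they apply globally. If $B=DAD^*$ then $W_2=DW_1$, and $D$ preserves absolute values coordinatewise. If $B=D\bar A D^*$ then $W_2=D\overline{W_1}$, and complex conjugation preserves absolute values as well.

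The main obstacle is the case of partial conjugation. Here $G$ has a 2-separation $\{S,T\}$ and $B$ agrees with $A$ up to diagonal unitary on one side and up to conjugate on the other, so $W_2$ is not globally a twist of $W_1$. I would handle this through the matroid 2-sum: the 2-separation should correspond to $W$ decomposing as a 2-sum of subspaces $W_S\subset\CCC^{S\cup\{p\}}$ and $W_T\subset\CCC^{T\cup\{p\}}$ glued along a basepoint $p$. The vectors of $W$ are then obtained by matching the $p$-coordinates of vectors in $W_S$ and $W_T$. Conjugating $W_T$ and twisting it by a unit phase preserves $|W_T|$, and after rephasing $p$ the matching condition only involves $|w_p|$. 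This would give $|W_1|=|W_2|$ provided I can verify that $|W|$ is determined by $|W_S|$ and $|W_T|$ in this way. That verification is the delicate point, since a match requires the same complex value at $p$ and not merely the same absolute value; I would address it using the freedom of a phase rotation within $W_T$. Finally, I will induct on the number of 2-separations to finish.
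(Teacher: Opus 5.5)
\paragraph{Where the proposal agrees with the paper.} Your reducible case matches the paper. So does your irreducible no-cut case via Loewy's theorem, once you check, as the paper does, that the diagonal factor can be taken unitary using irreducibility and $|a_{ij}|=|b_{ij}|$. Your gluing of absolute values across a 2-sum by a phase rotation is also the paper's argument.

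\paragraph{The gap: the cut case.} This is the heart of the theorem. You assert that $B$ is obtained from $A$ by a diagonal unitary on one side of a 2-separation and complex conjugation on the other, citing ``the more general statement.'' Loewy's theorem covers only the no-cut situation. What you actually need is a full classification of principal-minor equivalence for Hermitian projections, of the cut-transpose type mentioned in Section 2. You neither prove such a classification nor state one with verified hypotheses.

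Without it, ``induct on the number of 2-separations'' has nothing to act on. You have no information relating the 2-sum pieces of $W_2$ to those of $W_1$: $A[I]$ and $B[I]$ are only known to have equal principal minors, and they may have cuts of their own. Two further facts are also never established:
\begin{itemize}
\item that a cut $I$ of $A$ is a cut of $B$ (cuts are rank conditions, so equal supports do not give this);
\item that the two decompositions use the same weight $\alpha$ at the basepoint, which your matching at $p$ requires.
\end{itemize}

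\paragraph{How the paper fills this step.} It needs no classification. It transfers the hypothesis to the pieces and inducts on $n$.
\begin{enumerate}
\item $A[I]$ and $B[I]$ have the same characteristic polynomial. Hence $\rank B[I|I^c]=\rank(B[I]-B[I]^2)$ equals the number of eigenvalues of $A[I]$ in $(0,1)$, which is $1$. So $I$ is a cut of $B$, with the same $\alpha$.
\item A Schur-complement computation gives $f_A=f_{A[I]}f_{A[I^c]}-\alpha\beta\,a_{P_E,u}a_{P_F,v}$, and the analogous formula holds for $B$.
\item Compare the two formulas and separate the variables indexed by $I$ from those indexed by $I^c$. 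Normalizing with the top-degree coefficients $|u_i|^2$ yields $a_{P_E,u}=a_{P_M,p}$ and $f_{P_E}=f_{P_M}$.
\item Consequently the orthogonal projections onto $E\oplus\langle e_0+\sqrt{\alpha}u\rangle$ and $M\oplus\langle e_0+\sqrt{\alpha}p\rangle$ have equal principal minors. Their size is $|I|+1\le n-1$, so the induction hypothesis on $n$ applies directly to the pieces.
\end{enumerate}
Only after this does your gluing argument finish the proof.

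\paragraph{Base cases.} Loewy's theorem requires $n\ge 4$, so the cases $n\le 3$ need a separate direct argument, which the paper supplies.
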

    \vspace{-0.6cm}
    \noindent Starting with a subspace $W\subset \CCC^n$, we obtain a preferred orthogonal projection matrix corresponding to $W$, namely, the matrix representation of the orthogonal projection operator $\CCC^n\twoheadrightarrow W\subset \CCC^n$ with respect to the standard basis of $\CCC^n$ endowed with the inner product
    \[
    	\langle u,v\rangle=\sum_{i=1}^n\bar{u}_iv_i.
    \]
    Throughout, we identify all operators on $\CCC^n$ with its corresponding $n\times n$ matrix  representation written with respect to the standard basis and orthogonality will always be with respect to the inner product above.
    
    Theorem \ref{thm:same-principal-minors-have-equal-image} is closely related to a theorem of Loewy \cite{Loewy1986} regarding the classification of matrices that have equal principal minors of all orders. To see this, let $A$ and $B$ denote the orthogonal projection matrices onto subspaces $W_1,W_2\subset \CCC^n$ respectively. Then Loewy's theorem says that, under certain rank assumptions, if $A$ and $B$ have the same principal minors, then $B$ is a conjugate of $A$ or its transpose by a diagonal matrix. We show in Section \ref{sec:Results-from-Lin-Alg} that this matrix can be taken to be unitary and hence, $|W_1|=|\im A|=|\im B|=|W_2|$. 
    
    Loewy's theorem assumes two rank conditions: first, that $A$ is irreducible, and second, that it has no ``cuts''. In Section \ref{sec:background} we discuss this theorem in more detail, its relation to Theorem \ref{thm:same-principal-minors-have-equal-image} and describe some related problems.
    
    The proof of Theorem \ref{thm:same-principal-minors-have-equal-image} is done by induction on $n$. We prove the base cases $n=1,2,3$ first, and then for $n\ge 4$ we consider the three cases mentioned above. The reducible case follows directly from the induction hypothesis, while the no-cut case follows from Loewy's theorem. The focus of this paper is the case of irreducible orthogonal projection matrices with a cut. In Section \ref{sec:Results-from-Lin-Alg} we set up the required machinery for the proof.
    More precisely, we obtain a structure theorem for the images of irreducible orthogonal projection matrices that have a cut and obtain an explicit description of the principal minors of these. Irreducibility and cuts of orthogonal projection matrices are related to the notion of $k$-connectivity from matroid theory.
    The decomposition we obtain corresponds to writing an exactly 2-connected matroid as a 2-sum of smaller matroids, and this lets us describe $|W|$ in terms of the smaller dimensional subspaces in the decomposition.
    In Section \ref{sec:proof-of-thm-11}, we provide the full proof of Theorem \ref{thm:same-principal-minors-have-equal-image}.

     In Section \ref{sec:stratification} we make explicit the connection between irreducibility and having no cuts with that of $k$-connectivity. We show that a cut corresponds to the notion of a $2$-separation from matroid theory.
    We use this to extend the hypotheses of Loewy to a more general definition of exactly $k$-connected subspaces and $\infty$-connected subspaces.
    This gives a stratification of the Grassmannian, which is coarser than the one of Gelfand-Goresky-MacPherson-Serganova \cite{GelfandGoreskyMacPhersonSerganova}. We conclude by explicitly computing the open stratum using well-known results from matroid theory. Appendix \ref{sec:structure-of-k-conn-subspaces} has a self-contained treatment of these results from the linear algebra point of view and we prove a relation between the connectivity of subspaces with the cosine-sine decomposition of certain matrices.
    
    \subsection{Notation}
		\begin{enumerate}
			\item We write $[n]\coloneqq \{1,\dots, n\}$ and only work over $\CCC$. All our linear operators on $\CCC^n$ are identified with their corresponding matrix representation written with respect to the standard basis.
			\item All our vectors in $\CCC^n$ are column vectors written with respect to the standard basis. The notation $a=(a_1,\dots, a_n)$ also refers to $a$ as a column vector with $i$th component $a_i$.
			\item For an $n\times n$ matrix $A$ and subsets $I,J\subset [n]$, the notation $A[I|J]$ denotes the submatrix of $A$ obtained by picking the rows from $I$ and the columns from $J$. We write $A[I]$ for the principal submatrix $A[I|I]$. 
			\item In the case where $I=[k]$, we simply write $A[k|I]$ or $A[I|k]$ for $A[[k]|I]$ and $A[I|[k]]$ respectively.
			\item If $u$ is a column vector, we write $u[I]$ for $u[I|1]$. Similarly, if $v$ is a row vector, then $v[I]=v[1|I]$.
			\item We assume that $\CCC^n$ is endowed with the inner product $\langle u,v\rangle=u^*v$, where $A^*$ is the conjugate transpose of a matrix $A$.
			\item For $I\subset [n]$, we let $\CCC^I=\langle e_i:i\in I\rangle\subset \CCC^n$. We identify $\CCC^I$ with the set $\{(z_i)_{i\in I}:z_i\in \CCC\}$ when no confusions arise.\\
		\end{enumerate}
		
		\noindent\textbf{Acknowledgments.} The author is very grateful with Abeer Al-Ahmadieh, Jim Geelen, Elana Kalashnikov and Ruxandra Moraru for helpful conversations.
    
\section{Background and Motivation}\label{sec:background}
	The problem studied in this paper is related to the broader problem of determining a matrix by its principal minors. In general, principal minors do not determine a matrix uniquely. For example, any two matrices that are related by diagonal similarity or transposition must have the same principal minors. The main theorem of Loewy in \cite{Loewy1986} states that, for $n\ge 4$, if two $n\times n$ matrices $A$ and $B$ that have the same corresponding principal minors of all orders, with $A$ irreducible and such that either $\rank A[I|I^c]\ge 2$ or $\rank A[I^c|I]\ge 2$ for all $I\subset [n]$ with $2\le |I|\le n-2$, then $B=DAD^{-1}$ or $B=DA^tD^{-1}$ for $D$ an invertible diagonal matrix. Here, $A^t$ denotes the transpose of $A$ and $I^c\coloneqq [n]-I$. These two assumptions, which we now recall, are conditions on the rank of certain submatrices of $A$ and are related to the notion of $k$-connectivity of matroids. 
	First, recall that the matrix $A$ is \textbf{irreducible} if there is no permutation matrix $P$ such that $PAP^{-1}$ is block upper triangular. The second condition in this theorem says that $A$ has no ``cuts".
	
	\begin{Def}
		A \textbf{cut} of an $n\times n$ matrix $A$ is a subset $I\subset [n]$ with $2\le |I|\le n-2$ such that both $\rank A[I|I^c]$ and $\rank A[I^c|I]$ are at most 1.
	\end{Def}
	
	These rank conditions in the case of orthogonal projection matrices appear in the theory of matroids. For us, a \textbf{matroid} will only refer to a pair $M_W=([n],r_W)$ for $W$ a subspace of $\CCC^n$ and $r_W$ a \textbf{rank function} on $[n]$ given by $r_W(I)=\dim \pi_I(W)$, where $\pi_I:\CCC^n\to \CCC^I$ is the orthogonal projection. These are more commonly known as representable matroids over $\CCC$ \cite{Oxley2011}.
	The irreducibility and cut conditions can be expressed in terms of this rank function, and give rise to the connectivity function which encodes the rank of $A[I|I^c]$ (Proposition \ref{prop:principal-minor-rank-is-connectivity-func}).
	
	The wider problem of determining a matrix by its principal minors was considered in \cite{EngelSchneider1980} for the case of symmetric matrices and since then has been extended to more cases \cite{HartfielLoewy1984, Loewy1986, AlAhmadieh2026}. More recently, Chatterjee–Ghosh–Gurjar–Raj \cite{10.1145/3717823.3718146} characterized principal-minor equivalence for irreducible matrices using cut-transpose operations and diagonal equivalence.
	Another notable problem in this direction is that of the principal minor assignment problem \cite{ahmadieh2021characterizingprincipalminorssymmetric, GriffinTsatsomeros2006}.

    The set $|W|$, with $W\subset \CCC^n$ a subspace, appears in many areas of mathematics, such as matroid theory as a subset of the $\triangle$-vectors associated to matroids over the triangular hyperfield $\triangle$ \cite{Anderson2019, BakerBowler2019}, and in symplectic geometry as the (square-rooted) image of subvarieties under a toric moment map \cite{MikhalkinAmoebas}. This last example is our main motivation for this paper and will be explored in an upcoming paper.
    
    More precisely, for a subspace $W$ of dimension $r$, consider the set $\II_W$ of all subsets $I\subset [n]$ such that $\dim(W^\perp\cap \CCC^I)=1$. For a unit vector generating $W^\perp\cap \CCC^I$,
    let $p_I^W$ be its image under the coordinatewise absolute value map. One can show that these points encode the data of the principal minors of the orthogonal projection matrices onto $W^\perp$. Further, if the orthogonal projection matrices onto $W_1$ and $W_2$ have the same principal minors, then so do the orthogonal projection matrices onto $W_1^\perp$ and $W_2^\perp$. Combining this with Theorem \ref{thm:same-principal-minors-have-equal-image} gives us the following result, whose details we defer to a subsequent paper.
    
    \begin{thm*}
        If two $r$-dimensional subspaces $W_1,W_2\subset \CCC^n$ are such that $\II_{W_1}=\II_{W_2}\eqqcolon \II$ and $p_I^{W_1}=p^{W_2}_{I}$ for all $I\in \II$, then $|W_1^\perp|=|W_2^\perp|$.
        \label{thm:same-special-points-have-equal-image}
    \end{thm*}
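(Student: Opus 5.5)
The plan is to reduce the statement to Theorem \ref{thm:same-principal-minors-have-equal-image} applied to the orthogonal complements. Let $A_j$ be the orthogonal projection onto $W_j$, so that $I-A_j$ is the orthogonal projection onto $W_j^\perp$. It then suffices to show that $I-A_1$ and $I-A_2$ have equal corresponding principal minors of all orders, since Theorem \ref{thm:same-principal-minors-have-equal-image} would then give $|W_1^\perp|=|W_2^\perp|$. The work therefore splits into two steps. First, express the data $(\II_W,\{p_I^W\})$ in terms of principal minors of the projection $P=I-A$ onto $W^\perp$. Second, show that these data determine all principal minors of $P$.

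For the first step, fix $I\subset[n]$. Then $W^\perp\cap\CCC^I$ is the kernel of the compression of $A$ to $\CCC^I$, which equals the eigenspace of $P[I]$ for eigenvalue $1$. Since $0\preceq P[I]\preceq \mathrm{Id}$, I will use the identity $\det(\mathrm{Id}-P[I])=\det A[I]$. Thus $\det A[I]=0$ exactly when $W^\perp\cap\CCC^I\neq 0$, and $I\in\II_W$ means the eigenvalue $1$ occurs with multiplicity one. When $I\in\II_W$ is minimal, i.e.\ a circuit-type support, take $v$ a unit generator. The adjugate of $\mathrm{Id}_I-P[I]=A[I]$ is then a rank-one matrix proportional to $vv^*$. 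The diagonal entries of this adjugate are the principal minors $\det A[I\setminus\{i\}]$. Hence
\[
|v_i|^2=\frac{\det A[I\setminus\{i\}]}{\sum_{k\in I}\det A[I\setminus\{k\}]},
\]
so the points $p_I^W$ encode ratios of principal minors of $A$.

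For the second step, I would run this in reverse. The rank $r$ and the sets $\II$ fix the matroid of $W^\perp$, and hence the vanishing pattern of the principal minors. For the nonzero minors, I would use the Cauchy--Binet/Plücker description. With $W^\perp=\im M$ for an orthonormal $n\times(n-r)$ matrix $M$, we have $\det P[J]=\sum_{|K|=|J|}|\det M[J|K]|^2$. In particular the maximal-order minors $\det P[J]$ with $|J|=n-r$ are the squared absolute values $|\Delta_J|^2$ of the Plücker coordinates of $W^\perp$. Fundamental circuits relative to a basis $B$ give $|\Delta_{B-i+j}|/|\Delta_B|$ as ratios of coordinates of the corresponding $p_I$. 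Together with the normalization $\sum_J|\Delta_J|^2=1$, these determine all $|\Delta_J|^2$. A general principal minor is then the sum of the squared absolute values of the Plücker coordinates of $W^\perp$ containing $J$, namely $\det P[J]=\sum_{K\supseteq J}|\Delta_K|^2$, so all principal minors of $P$ agree for $W_1$ and $W_2$.

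The main obstacle is the second step: recovering $|\Delta_{B'}|/|\Delta_B|$ for bases not adjacent to $B$, and showing the answer is independent of the path of basis exchanges. I would handle this by fixing a reference basis $B$ and walking along basis exchanges in the basis graph of the common matroid, multiplying ratios. Path-independence is automatic because each ratio is computed from $W_j$ itself, so both subspaces produce the same product along any chosen path. The remaining care is with the exact definition of $\II_W$ (dimension exactly one rather than minimal support). If $I\in\II_W$ is not minimal, the support of $v$ is a circuit $C\subseteq I$ and $v$ spans $W^\perp\cap\CCC^C$, so I can restrict attention to circuits without loss.
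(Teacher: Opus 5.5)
Your argument is correct and follows the route the paper sketches. You show that $(\II,\{p_I\})$ determines every principal minor of the projection onto $W^\perp$ through $\det P[J]=\sum_{K\supseteq J}|\Delta_K(W^\perp)|^2$, and then apply Theorem \ref{thm:same-principal-minors-have-equal-image} to $I-A_1$ and $I-A_2$. Your Step 1 goes in the direction that is not needed and can be dropped. One indexing fix: $B$ indexes a nonzero Plücker coordinate of $W^\perp$, so the sets from which you read off $|\Delta_{B-i+j}|/|\Delta_B|$ must be the fundamental circuits of $M_W$ relative to $B^c$ (equivalently, the fundamental cocircuits of $M_{W^\perp}$ relative to $B$), that is, the supports of the vectors $x\in W^\perp$ with $x[B]=e_i$; these are exactly the sets that lie in $\II$.
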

    
    Note that the principal minors of an orthogonal projection matrix $A$ and the normalized Pl\"ucker coordinates $p_B(W)$ of $W=\im A$, where $\dim W=r$ and $B\in \binom{[n]}{r}$, are related by
    \[
        \det A[I]=\sum_{\substack{B\in \binom{[n]}{r} \\ I\subset B}} |p_B(W)|^2,
    \]
    with the sum understood to be zero if $|I|>r$. These coordinates are normalized such that $\sum_{|B|=r}|p_B(W)|^2=1$.
    Hence, Theorem \ref{thm:same-principal-minors-have-equal-image} can be interpreted as saying that the absolute values of the Pl\"ucker coordinates of $W$ determine $|W|$. As mentioned before, the theorem of Loewy gives us this with ease under the right assumptions. The main obstruction in extending to a general orthogonal projection matrix is the existence of off-diagonal blocks with small rank, that is, cuts. However, for $A$ irreducible, cuts give us a preferred direction in $W$ and hence acquire a more geometric and combinatorial meaning. This lets us construct $|W|$ from smaller dimensional subspaces and hence allows for induction.

\subsection{Outline of the Paper}
	We begin in Section \ref{sec:Results-from-Lin-Alg} by developing the linear algebra needed for the proof of Theorem \ref{thm:same-principal-minors-have-equal-image}. The main difficulty occurs when the orthogonal projection matrix is irreducible but has a cut. For such a projection $A$ with $W=\im A$ and $I\subset [n]$ a cut, we show that $A$ admits a block decomposition determined by the subspaces
	\[
		E=W\cap \CCC^I,\qquad F=W\cap \CCC^{I^c},
	\]
	together with unit vectors $u\in E^\perp\cap \CCC^I$, $v\in F^\perp\cap \CCC^{I^c}$ and a number $\alpha\in(0,1)$. In particular,
	\[
		W=E\oplus F\oplus
		\left\langle \sqrt{\alpha}u+\sqrt{1-\alpha}v\right\rangle.
	\]

	This allows us to describe both $W$ and $|W|$ as a gluing of two subspaces inside smaller coordinate spaces and hence points to a proof of Theorem \ref{thm:same-principal-minors-have-equal-image} by induction. We study the determinantal polynomial of an orthogonal projection having this form. We show that the principal minors of the projections onto the smaller subspaces $E$ and $F$ are related to those of $A$. We then prove an extension of Loewy's theorem to the cases $n=1,2,3$ which we will need for our induction, and show that, for $n\ge 4$, the diagonal matrix in Loewy's theorem can be taken to be unitary. We conclude with some basic results concerning simultaneous permutations of rows and columns.

	Section \ref{sec:proof-of-thm-11} contains the proof of Theorem \ref{thm:same-principal-minors-have-equal-image}. As mentioned above, the proof is by induction on the size $n$ of the projection matrices. The cases $n=1,2,3$ follow from the low-dimensional results of the preceding section. For $n\ge 4$, we divide the argument into three cases according to the structure of the orthogonal projection matrix $A$.

	If $A$ is reducible, then, after a simultaneous permutation of rows and columns, it decomposes into smaller orthogonal projection matrices. Equality of principal minors forces the corresponding decomposition for $B$, and the result follows immediately from the induction hypothesis. If $A$ is irreducible and has no cuts, then $A$ and $B$ differ, up to transposition, by conjugation by a diagonal unitary matrix. Since coordinatewise absolute value is invariant under diagonal unitary transformations and complex conjugation, the desired equality $|\im A|=|\im B|$ follows.

	The remaining case with $A$ irreducible and having a cut is the main part of the proof. We first show that the same subset $I$ is also a cut of $B$, so both matrices admit the structure theorem developed in Section \ref{sec:Results-from-Lin-Alg}. This produces two smaller spaces associated to each of $A$ and $B$. 
	Using the determinantal polynomial formulas, we show that the orthogonal projections onto the corresponding smaller spaces have equal principal minors. The induction hypothesis therefore identifies their images under coordinatewise absolute value. The gluing description of $|W|$ then shows that these equalities combine to give
	\[
		|\im A|=|\im B|,
	\]
	completing the proof of the main theorem.

	The final two sections connect the preceding decomposition into a more general geometric and combinatorial framework. In Section \ref{sec:stratification}, we introduce a connectivity function
	\[
		\lambda_W(I)
		=\dim W-\dim(W\cap\CCC^I)-\dim(W\cap\CCC^{I^c})
	\]
	for a subspace $W\subset\CCC^n$, which comes from matroid theory. We show that, if $A$ is the orthogonal projection onto $W$, then
	\[
		\lambda_W(I)=\rank A[I|I^c].
	\]
	
	Thus the three cases appearing in the proof of the main theorem can be interpreted in terms of connectivity: reducible projections correspond to reducible subspaces, irreducible projections with a cut correspond to exactly $2$-connected subspaces, and irreducible projections with no cuts correspond to $3$-connected subspaces. More generally, the connectivity level gives a finite stratification of the Grassmannian $\Gr(r,n)$. We study this stratification throughout Section \ref{sec:stratification} and end with a computation of the open stratum using results from matroid. Some of these results are proven in a self-contained manner in Appendix \ref{sec:structure-of-k-conn-subspaces}. In this appendix, we also prove that a generalization of the structure theorem from Section \ref{sec:structure-ofo-exactly-2-conn} to exactly $k$-connected subspaces can be obtained from the cosine-sine decomposition.

\section{Some Results from Linear Algebra}\label{sec:Results-from-Lin-Alg}
	The main goal of this section is to set up some fundamental results needed in the proof of Theorem \ref{thm:same-principal-minors-have-equal-image}. We first give a decomposition of irreducible orthogonal projection matrices that have a cut and of their images in terms of smaller orthogonal projection matrices. We use this decomposition to relate the principal minors of such a projection with the smaller pieces obtained from the decomposition. The remainder of the section gives an extension of Loewy's theorem to lower dimensional cases, a corollary of Loewy's theorem for the case of orthogonal projection matrices and some linear algebra results that show that we can simultaneously permute rows and columns without changing the equality of principal minors between two matrices.
	
\subsection{The Structure of Irreducible Orthogonal Projections with a Cut}\label{sec:structure-ofo-exactly-2-conn}
	Let $A$ be an $n\times n$ irreducible orthogonal projection matrix, $n\ge 4$, and let $I$ be a cut of $A$. Write $W$ for the image of $A$. By irreducibility, we must have $\rank A[I|I^c]=1$. Hence, we can write
	\[
		A[I|I^c]=suv^*
	\]
	for some unit vectors $u\in \CCC^{I}$, $v\in \CCC^{I^c}$, and $s>0$. Since $A$ is Hermitian and $A^2=A$, we must have $A[I|I^c]^*=A[I^c|I]$ so
	\begin{align}
		A[I] &=A[I]^2+s^2uu^*,\label{eq:A[I]-for-cuts}\\
		A[I^c] &= A[I^c]^2+s^2vv^*,\label{eq:A[I^c]-for-cuts}\\
		uv^* &=A[I]uv^*+uv^*A[I^c]. \label{eq:A[I|I^c]-for-cuts}
	\end{align}
	Multiplying equation (\ref{eq:A[I|I^c]-for-cuts}) on the left by $I-uu^*$ and on the right by $v$ gives
	\[
		A[I]u=(u^*A[I]u)u,
	\]
	so that $u$ is an eigenvector of $A[I]$ with eigenvalue $\alpha\coloneqq u^*A[I]u$.
	If we instead had multiplied (\ref{eq:A[I|I^c]-for-cuts}) by $I-vv^*$ on the right and by $u^*$ on the left then we would have gotten
	\[
		A[I^c]v=(v^*A[I^c]v)v,
	\]
	so $v$ is an eigenvector of $A[I^c]$ with eigenvalue $\beta\coloneqq v^*A[I^c]v$.
	
	Using equation (\ref{eq:A[I|I^c]-for-cuts}) again, we find that
	\[
		1=\alpha+\beta.
	\]
	Now, by multiplying equation (\ref{eq:A[I]-for-cuts}) by $u$ on the right, we find $\alpha=\alpha^2+s^2$ and hence
	\[
		s^2=\alpha-\alpha^2=\alpha(1-\alpha).
	\]
	Since $s>0$, we have $0<\alpha<1$.
	
	Equation (\ref{eq:A[I]-for-cuts}) above says that the restriction of $A[I]$ to $u^\perp$ is a projection, and since $A[I]$ is Hermitian, it is an orthogonal projection when restricted to $u^\perp$, where the orthogonal complement is taken inside $\CCC^{I}$. By decomposing $\CCC^{I}$ as $u^\perp\oplus \langle u\rangle$, we have that
	\[
		A[I]=P_E+\alpha uu^*,
	\]
	where $P_E=A[I]|_{u^\perp}$ and $E=\im P_E$. Note that $E=W\cap \CCC^I$. Indeed, if $z\in u^\perp$, then
	\[
		Az=\begin{pmatrix}
			A[I]z\\
			A[I^c|I]z
		\end{pmatrix}=\begin{pmatrix}
			P_Ez\\
			0
		\end{pmatrix}.
	\]
	The left hand side is in $W$ and the right hand side is in $\CCC^I$, so $P_Ez\in W\cap \CCC^I$. The converse is similar.
	
	We can repeat the same procedure with $I^c$ and find that
	\[
		A[I^c]=P_F+\beta vv^*,
	\]
	for $P_F=A[I^c]|_{v^\perp}$ an orthogonal projection matrix with image $F=W\cap \CCC^{I^c}$.
	
	The work above allows us to have a full decomposition of $W$ and its orthogonal projection $A$.
	
	\begin{thm}
		Let $A$ be an $n\times n$ orthogonal projection matrix which is irreducible and has a cut $I$. Write $W$ for the image of $A$. Then $A$ has a block decomposition in terms of the orthogonal projections $P_E$ and $P_F$ onto $E=W\cap \CCC^I$ and $F=W\cap \CCC^{I^c}$ respectively, as
		\begin{enumerate}
			\item $A[I]=P_E+\alpha uu^*$,
			\item $A[I^c]=P_F+(1-\alpha) vv^*$,
			\item $A[I|I^c]=\sqrt{\alpha(1-\alpha)}uv^*$,
		\end{enumerate}
		where $u\in \CCC^I\cap E^\perp$, $v\in \CCC^{I^c}\cap F^\perp$ are unit vectors and $\alpha\in (0,1)$.
		These have the additional property that
		\begin{enumerate}
			\item [(4)] $\alpha$ is the only eigenvalue of $A[I]$ in $(0,1)$, counted with multiplicity,
			\item [(5)] $1-\alpha$ is the only eigenvalue of $A[I^c]$ in $(0,1)$, counted with multiplicity,
			\item [(6)] all other eigenvalues of $A[I]$ and $A[I^c]$ are either $0$ or $1$.
		\end{enumerate}
		This induces a decomposition of $W$ as
		\[
			W=E\oplus F\oplus \langle \sqrt{\alpha}u+\sqrt{\beta}v\rangle,\quad \beta\coloneqq 1-\alpha.
		\]		
		\label{thm:cut-decomposition}
	\end{thm}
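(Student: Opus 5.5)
Most of items (1)--(3) are already contained in the computation preceding the statement, so the plan is to collect those identities and then verify the remaining claims (4)--(6) and the decomposition of $W$. First, irreducibility forces $\rank A[I|I^c]=1$: since $A$ is Hermitian, $A[I^c|I]=A[I|I^c]^*$, and a zero block would make $A$ block diagonal after a permutation. Writing $A[I|I^c]=suv^*$ and using $A^2=A$ blockwise gives (\ref{eq:A[I]-for-cuts})--(\ref{eq:A[I|I^c]-for-cuts}). From these we get that $u$ and $v$ are eigenvectors with eigenvalues $\alpha$ and $\beta$, that $\alpha+\beta=1$, and that $s^2=\alpha(1-\alpha)$ with $\alpha\in(0,1)$. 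This yields (3) directly. For (1), note that $A[I]$ is Hermitian and preserves $\langle u\rangle$, hence also preserves $u^\perp\cap\CCC^I$. On $u^\perp$, (\ref{eq:A[I]-for-cuts}) reduces to $A[I]^2=A[I]$, because the term $s^2uu^*$ vanishes there. So the restriction $P_E$ is an orthogonal projection on $u^\perp$, and $A[I]=P_E+\alpha uu^*$. The identification $E=\im P_E=W\cap\CCC^I$ follows from the computation already given, together with its converse: if $w\in W\cap\CCC^I$, then $w=Aw$ forces $A[I]w=w$, so $w$ lies in the $1$-eigenspace of $A[I]$. Since $\alpha\neq1$, $w\perp u$ and $w=P_Ew$. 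By the same argument, $u\perp E$. The case of $I^c$ is symmetric and gives (2) with $\beta=1-\alpha$.

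For (4)--(6), the eigenvalues of $A[I]=P_E+\alpha uu^*$ are read off from the orthogonal decomposition $\CCC^I=\langle u\rangle\oplus u^\perp$. On $\langle u\rangle$ the eigenvalue is $\alpha$, and on $u^\perp$ the eigenvalues are those of the projection $P_E$, namely $0$ or $1$. Hence $\alpha$ is the unique eigenvalue in $(0,1)$, with multiplicity one. One subtlety is that if $\alpha$ equalled an eigenvalue of $P_E$ the multiplicity would grow, but this cannot happen since $\alpha\in(0,1)$. The same holds for $A[I^c]$ with $1-\alpha$.

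For the decomposition of $W$, set $w_0=\sqrt\alpha u+\sqrt\beta v$ and check that $Aw_0=w_0$ blockwise. The $I$-block is
\[
A[I]\sqrt\alpha u+A[I|I^c]\sqrt\beta v=\alpha\sqrt\alpha u+\sqrt{\alpha\beta}\sqrt\beta u=\sqrt\alpha u,
\]
and the $I^c$-block is handled the same way using $A[I^c|I]=\sqrt{\alpha\beta}vu^*$. Hence $w_0\in W$. The spaces $E$, $F$ and $\langle w_0\rangle$ are mutually orthogonal, since $E\subset\CCC^I$ and $F\subset\CCC^{I^c}$ have disjoint supports and $u\perp E$, $v\perp F$. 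So their sum is direct and lies inside $W$. To show equality, compare dimensions:
\[
\dim W=\operatorname{tr}A=\operatorname{tr}A[I]+\operatorname{tr}A[I^c]=(\dim E+\alpha)+(\dim F+\beta)=\dim E+\dim F+1.
\]

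The only step needing real care is the invariance argument showing that $A[I]$ restricted to $u^\perp$ is an honest orthogonal projection whose image is exactly $W\cap\CCC^I$. Everything else is bookkeeping, and the trace count closes the argument.
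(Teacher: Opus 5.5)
Your proof is correct. For (1)--(3) you follow the computation the paper carries out before the statement, and you also make explicit two points it only sketches: that $A[I]$ preserves $u^\perp$, and the converse inclusion $W\cap\CCC^I\subset\im P_E$, which you get from the $1$-eigenspace of $A[I]$ together with $\alpha\neq 1$.

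For (4)--(6) your route differs from the paper's. You read the spectrum directly off the orthogonal splitting $\CCC^I=\langle u\rangle\oplus u^\perp$, on which $A[I]$ acts as multiplication by $\alpha$ and as $P_E$ respectively. The paper instead sets $C=A[I|I^c]$ and uses the identity $CC^*=A[I]-A[I]^2$. The eigenvalues of $CC^*$ are then $\lambda-\lambda^2$ for $\lambda$ in the spectrum of $A[I]$, and $\rank CC^*=1$ leaves room for exactly one eigenvalue outside $\{0,1\}$.

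Your version is more transparent once (1) is in hand. The paper's version uses only the rank of the off-diagonal block, and that is what it needs later. In Section \ref{sec:proof-of-thm-11} the same identity is run in reverse, $\rank(B[I]-B[I]^2)=\rank B[I|I^c]$, to show that $I$ is also a cut of $B$. At that point no decomposition of $B[I]$ is available, so your spectral reading would not apply there.

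Finally, the paper only asserts $W=E\oplus F\oplus\langle\sqrt{\alpha}u+\sqrt{\beta}v\rangle$. You actually prove it: you check $Aw_0=w_0$, verify mutual orthogonality, and count $\dim W=\operatorname{tr}A=\dim E+\dim F+1$. This is a clean, self-contained justification. An alternative would be $\lambda_W(I)=\rank A[I|I^c]=1$ from Proposition \ref{prop:principal-minor-rank-is-connectivity-func}, but that result appears only later in the paper.
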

	\vspace{-0.9cm}
	\begin{proof}
		It only remains to show the additional properties (4)-(6). First, since $A[I]$ is a principal submatrix of a projection matrix, its eigenvalues must all lie in $[0,1]$.
		
		Write $C=A[I|I^c]$. Then $CC^*=A[I]-A[I]^2$ so that the eigenvalues of $CC^*$ are exactly of the form $\lambda-\lambda^2$ with $\lambda$ ranging through the spectrum of $A[I]$. Since $\rank CC^*=1$, this means that there is only one $\lambda$, with multiplicity, in the spectrum of $A[I]$ such that $\lambda(1-\lambda)\neq 0$, that is, all but one eigenvalue of $A[I]$ are equal to $0$ or $1$. Since $\alpha$ is such an eigenvalue, it must be the only one.
		
		The proof for $A[I^c]$ is similar.
	\end{proof}

	The second part of the theorem has a very geometric description. Consider the vector spaces $\CCC \times \CCC^I$ and $\CCC \times \CCC^{I^c}$ where the first component of each is generated by new coordinate vectors $e_0$ and $e_0'$ respectively. 
	Extend the induced inner products on $\CCC^I$ and $\CCC^{I^c}$ from $\CCC^n$ to the products above in such a way that $e_0$ is a unit vector orthogonal to $\CCC^I$, and similarly for $e_0'$ and $\CCC^{I^c}$.
	Then the theorem above then says $W$ is constructed as follows: pick subspaces $E\subset \CCC^I$, $F\subset \CCC^{I^c}$ and unit vectors $u\in \CCC^I\cap E^\perp$, $v\in \CCC^{I^c}\cap F^\perp$ together with $\alpha\in (0,1)$, and consider
	\[
		\EE\coloneq E\oplus \langle e_0+\sqrt{\alpha}u\rangle,\qquad  \FF\coloneqq F\oplus \langle e_0'+\sqrt{1-\alpha}v\rangle
	\]
	together with the projections 
	\begin{alignat*}{2}
		\pi_0 &:\EE\to \CCC, \qquad\qquad \pi_0(e+\lambda(e_0+\sqrt{\alpha}u)) &&=\lambda,\\
		\pi_0'&:\FF\to \CCC, \qquad \pi_0'(f+\lambda(e_0'+\sqrt{1-\alpha}v)) &&=\lambda, 
	\end{alignat*}
	where $e\in E$, $f\in F$.
	Then $W$ is the projection of
	\[
		\EE\times_\CCC \FF =\Big\{(e,f)\in \EE\times \FF: \pi_0(e)=\pi_0'(f)\Big\}
	\]
	under the projection to $\CCC^I\times \CCC^{I^c}$. The following corollary then says that the absolute value map respects this decomposition, i.e., that $|W|$ is the projection of $|\EE|\times_{\RRR_{\ge 0}}|\FF|$ onto $\RRR^I_{\ge 0}\times \RRR^{I^c}_{\ge 0}$.
	
	\begin{cor}
    	Let $W$ be the image of an $n\times n$ orthogonal projection matrix which is irreducible and has a cut $I$. Write
    	\[
    		W=E\oplus F\oplus \langle \sqrt{\alpha}u+\sqrt{\beta}v\rangle
    	\]
    	as in Theorem \ref{thm:cut-decomposition}. Consider new coordinate vectors $e_0$ and $e_0'$ and let
    	\begin{align*}
    		\EE &\coloneqq \langle e_0+\sqrt{\alpha}u\rangle\oplus E \subset \CCC e_0\oplus \CCC^I\\
    		 \FF &\coloneqq \langle e'_0+\sqrt{\beta}v\rangle\oplus F \subset \CCC e_0'\oplus \CCC^{I^c}.
    	\end{align*} 
    	Then
    	\[
    		|W|=\left\{(a,b)\in \RRR_{\ge 0}^I\oplus \RRR_{\ge 0}^{I^c}:\text{there exists }c\in \RRR_{\ge 0}\text{ with } (c,a)\in |\EE|,\ (c,b)\in |\FF|\right\}
    	\]
    	\label{cor:absolute-image-of-2-connected-subspace}
    \end{cor}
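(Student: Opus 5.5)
We prove the two inclusions directly from the decomposition $W=E\oplus F\oplus\langle\sqrt{\alpha}u+\sqrt{\beta}v\rangle$ of Theorem \ref{thm:cut-decomposition}. Every $w\in W$ can be written uniquely as $w=e+f+\lambda(\sqrt{\alpha}u+\sqrt{\beta}v)$ with $e\in E\subset\CCC^I$, $f\in F\subset\CCC^{I^c}$ and $\lambda\in\CCC$. Splitting into coordinates gives
\[
w[I]=e+\lambda\sqrt{\alpha}u,\qquad w[I^c]=f+\lambda\sqrt{\beta}v .
\]
The points $\lambda e_0+e+\lambda\sqrt{\alpha}u\in\EE$ and $\lambda e_0'+f+\lambda\sqrt{\beta}v\in\FF$ both have $0$-th coordinate $\lambda$. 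Taking $c=|\lambda|$, $a=|w[I]|$ and $b=|w[I^c]|$, we get $(c,a)\in|\EE|$ and $(c,b)\in|\FF|$. This proves $|W|\subset$ the right-hand side.

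For the reverse inclusion, suppose $(c,a)\in|\EE|$ and $(c,b)\in|\FF|$. Then there are $x=\lambda e_0+e+\lambda\sqrt{\alpha}u\in\EE$ with $|x|=(c,a)$ and $y=\mu e_0'+f+\mu\sqrt{\beta}v\in\FF$ with $|y|=(c,b)$. In particular $|\lambda|=|\mu|=c$.

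The key observation is that $|\EE|$ and $|\FF|$ are invariant under multiplying a vector by a unit complex scalar, since $\EE$ and $\FF$ are complex subspaces. If $c>0$, replace $y$ by $(\lambda/\mu)y\in\FF$. This does not change $|y|$, and the new $y$ has $0$-th coordinate exactly $\lambda$. If $c=0$, then $\lambda=\mu=0$ already.

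Now set $w=e+f'+\lambda(\sqrt{\alpha}u+\sqrt{\beta}v)\in W$, where $f'$ is the rescaled $F$-component. Its $I$-coordinates equal the $\CCC^I$-part of $x$, and its $I^c$-coordinates equal the $\CCC^{I^c}$-part of the rescaled $y$. Hence $|w|=(a,b)$, which gives the reverse inclusion.

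Some care is needed with what the $\CCC^I$-part of $x$ means. Since $e_0$ is a new coordinate orthogonal to $\CCC^I$, the $\CCC^I$-coordinates of $x$ are exactly $e+\lambda\sqrt{\alpha}u$. The decomposition of $x$ as an element of $\EE$ is unique because $e_0\notin\CCC^I$, so the sum $\langle e_0+\sqrt{\alpha}u\rangle\oplus E$ is direct.

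I expect no serious obstacle. The only point requiring attention is the phase-alignment step. It is what allows the absolute-value fibre product to be taken over $\RRR_{\ge0}$ rather than over $\CCC$, and it relies only on the fact that $\EE$ and $\FF$ are closed under multiplication by $\CCC$. That step handles both the case $c>0$ and the degenerate case $c=0$.
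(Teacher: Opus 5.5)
Your proposal is correct and follows essentially the same route as the paper. The forward inclusion reads off the common $0$-th coordinate $\lambda$. The reverse inclusion aligns phases by multiplying the $\FF$-representative by the unit scalar $\lambda/\mu$, which is exactly the paper's $t=\lambda_1/\lambda_2\in S^1$, and the case $c=0$ is handled trivially.
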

    \begin{proof}
    	If $z\in W$, then $z=e+f+\lambda (\sqrt{\alpha}u+\sqrt{\beta}v)$ for some $e\in E$, $f\in F$ and $\lambda\in \CCC$. Let $a=|e+\lambda \sqrt{\alpha}u|$, $b=|f+\lambda \sqrt{\beta}v|$ and $c=|\lambda|$ so that $|z|=(a,b)\in \RRR^I\oplus \RRR^{I^c}$ and
    	\begin{align*}
    		(c,a) &=(|\lambda|,|e+\lambda\sqrt{\alpha}u|)=|e+\lambda(\sqrt{\alpha}u+e_0)|\in |\EE|,\\
    		(c,b) &=|\lambda e_0'|+|f+\lambda\sqrt{\beta}v|=|f+\lambda(\sqrt{\beta}v+e_0')|\in |\FF|.
    	\end{align*}
    	
    	Conversely, if $(a,b)\in \RRR_{\ge 0}^I\oplus \RRR^{I^c}_{\ge 0}$ is such that there is some $c\in \RRR_{\ge 0}$ with $(c,a)\in |\EE|,\ (c,b)\in |\FF|$, then we can write
    	\[
    		a=|e+\lambda_1 \sqrt{\alpha}u|,\qquad b=|f+\lambda_2\sqrt{\beta}v|
    	\]	
    	for some $e\in E,f\in F, \lambda_1,\lambda_2\in \CCC$ with $|\lambda_1|=|\lambda_2|=c$. If $c=0$ we are done, and if $c\neq 0$, then for $t=\lambda_1/\lambda_2\in S^1$,
    	\[
    		b=|f+\lambda_2\sqrt{\beta}v|=|t||f+\lambda_2\sqrt{\beta}v|=
    		 \left| tf+\lambda_1 \sqrt{\beta}v \right|.
    	\]
    	Let $\lambda=\lambda_1$ and $z=e+tf+\lambda(\sqrt{\alpha}u+\sqrt{\beta}v)$. Then $z\in W$ with $|z|=(a,b)$.
    \end{proof}

\subsection{Determinantal Polynomials and Orthogonal Projections}\label{sec:determinantal-of-exactly-2-conn}
	In order to study the principal minors of a matrix, we need to use a polynomial that keeps track of the principal minors of a matrix. A natural choice for this is the determinantal polynomial of a matrix $A$, which is a polynomial whose coefficients are the principal minors of $A$:
	
	\begin{Def}
        For an $n\times n$ matrix $A$, its \textbf{determinantal polynomial} is
        \[
            f_A=\det(A+\diag(x_1,\dots, x_n)).
        \]
    \end{Def}
    
    Using the Leibniz determinant formula we obtain the following.
    
    \begin{prop}
        The coefficients of $f_A$ are the principal minors of $A$:
        \[
            f_A=\sum_{I\subset [n]}\det A[I]\prod_{j\not\in I}x_j,
        \]
        where we set $\det A[\emptyset]=1$.
        In particular, two matrices $A$ and $B$ have the same corresponding principal minors of all orders if and only if $f_A=f_B$.
        \label{prop:coefficients-of-determinantal-poly-are-minors}
    \end{prop}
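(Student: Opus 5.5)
The plan is to expand $\det(A+\diag(x_1,\dots,x_n))$ by multilinearity in the columns. The $j$th column of $A+\diag(x_1,\dots,x_n)$ is $a_j+x_je_j$, where $a_j$ is the $j$th column of $A$. Multilinearity of the determinant in columns gives
\[
f_A=\sum_{S\subset[n]}\Bigl(\prod_{j\in S}x_j\Bigr)\det M_S,
\]
where $M_S$ is the matrix whose $j$th column is $e_j$ for $j\in S$ and $a_j$ for $j\notin S$. This is equivalent to grouping the terms of the Leibniz formula according to the set of indices at which the diagonal variable is selected.

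Next we identify $\det M_S$. Set $I=S^c$. The columns indexed by $S$ are standard basis vectors $e_j$. Expanding the determinant successively along each such column leaves the minor obtained by deleting the rows and columns indexed by $S$. The sign of each expansion is $(-1)^{j+j}=1$, since row and column indices coincide. Alternatively, one can simultaneously permute rows and columns so that $S$ comes last; this does not change the determinant and makes $M_S$ block upper triangular with diagonal blocks $A[I]$ and the identity. Either way $\det M_S=\det A[I]$. Writing the sum over $I=S^c$ gives the stated formula, with the $I=\emptyset$ term equal to $\prod_j x_j$ and hence the convention $\det A[\emptyset]=1$.

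For the final assertion, the monomials $\prod_{j\notin I}x_j$ are distinct squarefree monomials for distinct $I$, so they are linearly independent. Therefore $f_A=f_B$ as polynomials if and only if $\det A[I]=\det B[I]$ for every $I\subset[n]$. The only point needing care is the sign in the Laplace expansion, which is handled by the fact that each expansion is along a matching row and column (equivalently, by the simultaneous permutation).
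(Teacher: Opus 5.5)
Your proof is correct and takes the same approach as the paper, which simply invokes the Leibniz formula; your column-multilinearity expansion followed by cofactor expansion along the columns $e_j$, $j\in S$, is exactly the grouping of Leibniz terms by the set of diagonal variables chosen. One cosmetic slip: with $S$ placed last, the permuted $M_S$ is block lower triangular rather than upper, which does not affect $\det M_S=\det A[I]$.
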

    
    When the matrix $A$ has the decomposition given by Theorem \ref{thm:cut-decomposition} with $I=[k]$, i.e.,
    \[
		A=\begin{pmatrix}
			P_E+\alpha uu^* & \sqrt{\alpha\beta}uv^*\\
			\sqrt{\alpha\beta}vu^* & P_F+\beta vv^*
		\end{pmatrix}, \qquad \alpha+\beta=1,
	\]
	we obtain a natural family of subspaces $E$ and $F$ that live in the smaller coordinate subspaces $\CCC^I$ and $\CCC^{I^c}$ respectively. Hence, in order to do induction, we need to compare the principal minors of $A$ with those of $P_E$ and $P_F$.
	The defect between their corresponding determinantal polynomials is captured by the following polynomial:
	
	\begin{Def}
		For a square matrix $M$ indexed by $S\subset [n]$, define 
		\[
			a_{M,u}\coloneqq u^*\mathrm{adj}(\diag(x_i:i\in S)+M)u.
		\]
		where $\mathrm{adj}(A)$ is the adjugate matrix of $A$, that is, the matrix whose $(s_i,s_j)$-entry is $(-1)^{i+j}\det A[S\setminus \{s_j\}|S\setminus \{s_i\}]$ after writing $S=\{s_1<\dots<s_m\}$.
	\end{Def}
	
	The reason the adjugate needs more care with the indices is that the positions of entries in a submatrix of $A$ need not match their positions inside of $A$ itself.
	With this in mind, the main objective of this section is to prove the following result:
	\begin{thm}
		For an orthogonal projection matrix 
		\[
			A=\begin{pmatrix}
				P_E+\alpha uu^* & \sqrt{\alpha\beta}uv^*\\
				\sqrt{\alpha\beta}vu^* & P_F+\beta vv^*
			\end{pmatrix}, \qquad \alpha+\beta=1,
		\]
		satisfying Theorem \ref{thm:cut-decomposition}, its determinantal polynomial is given by
		$$f_A =f_{P_E+\alpha uu^*}\cdot f_{P_F+\beta vv^*}-\alpha\beta a_{P_E,u}\cdot a_{P_F,v}.$$
		\label{thm:determinantal-poly-of-full-matrix-with-cut}
	\end{thm}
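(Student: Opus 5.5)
The plan is to write $X=\diag(x_i:i\in I)$ and $Y=\diag(x_j:j\in I^c)$, and to set
\[
M_1=X+P_E+\alpha uu^*,\qquad M_2=Y+P_F+\beta vv^*.
\]
Then $A+\diag(x)$ has diagonal blocks $M_1,M_2$ and off-diagonal blocks $\sqrt{\alpha\beta}\,uv^*$ and $\sqrt{\alpha\beta}\,vu^*$. Both off-diagonal blocks have rank one, so the natural tool is the matrix determinant lemma $\det(M+pq^*)=\det M+q^*\mathrm{adj}(M)p$. This identity holds polynomially in the entries, so it does not require invertibility of $M$.

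\textbf{Step 1: rank-two perturbation.} Write
\[
A+\diag(x)=\begin{pmatrix}M_1&0\\0&M_2\end{pmatrix}+\sqrt{\alpha\beta}\,\bigl(\tilde u\tilde v^*+\tilde v\tilde u^*\bigr),
\]
where $\tilde u=(u,0)$ and $\tilde v=(0,v)$. Work over the field of rational functions in the $x_i$; there the block-diagonal matrix $D=M_1\oplus M_2$ is invertible, since its determinant is a monic polynomial of top degree $\prod x_i$. For a rank-two update $D+UCV^*$ one has
\[
\det(D+UCV^*)=\det D\cdot\det(I_2+CV^*D^{-1}U).
\]
Take $U=[\tilde u\ \tilde v]$ and $C$ the swap matrix $\begin{pmatrix}0&1\\1&0\end{pmatrix}$ times $\sqrt{\alpha\beta}$. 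Then $V^*D^{-1}U$ is diagonal, because $D$ is block diagonal and $\tilde u,\tilde v$ have disjoint supports. Its diagonal entries are $p=u^*M_1^{-1}u$ and $q=v^*M_2^{-1}v$. The $2\times2$ determinant is then
\[
\det\begin{pmatrix}1&\sqrt{\alpha\beta}\,q\\ \sqrt{\alpha\beta}\,p&1\end{pmatrix}=1-\alpha\beta\,pq.
\]
Multiplying out, and using $\det M_1\cdot M_1^{-1}=\mathrm{adj}(M_1)$, gives
\[
f_A=f_{P_E+\alpha uu^*}\,f_{P_F+\beta vv^*}-\alpha\beta\,\bigl(u^*\mathrm{adj}(M_1)u\bigr)\bigl(v^*\mathrm{adj}(M_2)v\bigr).
\]

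\textbf{Step 2: remove the rank-one terms.} It remains to show $u^*\mathrm{adj}(X+P_E+\alpha uu^*)u=a_{P_E,u}$, and similarly for $v$. This is where the adjugate of a rank-one update enters. Let $N=X+P_E$. Using $M_1^{-1}=(N+\alpha uu^*)^{-1}$, Sherman--Morrison gives
\[
u^*M_1^{-1}u=\frac{r}{1+\alpha r},\qquad r=u^*N^{-1}u,
\]
and $\det M_1=\det N\,(1+\alpha r)$. Hence
\[
u^*\mathrm{adj}(M_1)u=\det M_1\cdot\frac{r}{1+\alpha r}=\det N\cdot r=u^*\mathrm{adj}(N)u=a_{P_E,u}.
\]
The same computation applies to $v$ with $\beta$. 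Both sides are polynomials, so the identity over the rational function field yields the polynomial identity.

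\textbf{Main obstacle.} The main point needing care is the adjugate indexing convention in the definition of $a_{M,u}$ when $I$ is not an initial segment. A simultaneous permutation reduces to $I=[k]$, and both sides transform compatibly under permutation conjugation, since $\mathrm{adj}(PMP^{-1})=P\,\mathrm{adj}(M)P^{-1}$. Beyond that, I only need to check that the $2\times2$ determinant has the swap structure claimed, so that the cross term carries $\alpha\beta$ rather than $\sqrt{\alpha\beta}$.
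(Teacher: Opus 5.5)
Your proof is correct and follows essentially the paper's route: the paper applies the Schur determinant formula on the dense open set where the diagonal blocks are invertible, then uses Sylvester's identity to reach the same factor $1-\alpha\beta\,(u^*M_1^{-1}u)(v^*M_2^{-1}v)$ that your rank-two determinant lemma produces, and removes the $\alpha uu^*$ from the adjugate in a separate lemma. The differences are cosmetic. The paper proves $a_{M+\alpha uu^*,u}=a_{M,u}$ by comparing $f_{M+2\alpha uu^*}$ with $f_{M+\alpha uu^*}$ via its rank-one formula $f_{A+\alpha uu^*}=f_A+\alpha a_{A,u}$, where you use Sherman--Morrison, and it extends from an open set by density, where you work over $\CCC(x_1,\dots,x_n)$.
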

	\vspace{-0.9cm}

    \begin{rem}
    	Since we will later be comparing matrices of different size, we establish the following convention: if $A$ is a matrix indexed by a set $S\subset[n]$, we define
	 	\[
	 		D_S(x)\coloneqq \diag(x_s:s\in S)
    	\] 
    	and
    	\[
    		f_A\coloneqq \det(A+D_S(x))\in \CCC[x_s:s\in S].
    	\]
    	When $S=[n]$, we simply write $D_{[n]}(x)=D(x)$.
    \end{rem}
   
  	By taking $x_1=\dots=x_n=t$ and replacing $A$ by $-A$ in Proposition \ref{prop:coefficients-of-determinantal-poly-are-minors}, we obtain the following well-known result.
    
    \begin{cor}
    	The coefficients of the characteristic polynomial $\lambda_A(t)$ of an $n\times n$ matrix $A$ correspond to the sum of the principal minors:
    	\[
    		\lambda_A(t)\coloneqq \det(tI-A)=t^n+c_1t^{n-1}+\dots +c_n,\quad c_i=(-1)^i\sum_{\substack{I\subset [n],|I|=i}}\det A[I]
    	\]
        where the sum on the right is over all subsets of $[n]$ of cardinality $i$.
    	\label{cor:coeff-of-char-poly}
    \end{cor}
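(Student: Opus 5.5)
The plan is to specialize Proposition \ref{prop:coefficients-of-determinantal-poly-are-minors} exactly as the sentence preceding the statement suggests. We apply the proposition to the matrix $-A$ in place of $A$. This gives the polynomial identity
\[
	f_{-A}=\det\bigl(-A+\diag(x_1,\dots,x_n)\bigr)=\sum_{I\subset[n]}\det\bigl((-A)[I]\bigr)\prod_{j\notin I}x_j
\]
in $\CCC[x_1,\dots,x_n]$, with the convention $\det(-A)[\emptyset]=1$.

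Next we substitute $x_1=\dots=x_n=t$. Since this is a ring homomorphism $\CCC[x_1,\dots,x_n]\to\CCC[t]$, the identity is preserved. The left-hand side becomes $\det(tI-A)=\lambda_A(t)$, and each monomial $\prod_{j\notin I}x_j$ becomes $t^{n-|I|}$.

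It then remains to handle the signs. Because $(-A)[I]=-(A[I])$ is an $|I|\times|I|$ matrix, multilinearity of the determinant in its rows gives $\det((-A)[I])=(-1)^{|I|}\det A[I]$. We then group the terms of the sum according to $i=|I|$, which yields
\[
	\lambda_A(t)=\sum_{i=0}^n\Bigl((-1)^i\sum_{|I|=i}\det A[I]\Bigr)t^{n-i}.
\]
The $i=0$ term is $t^n$, because only $I=\emptyset$ contributes and $\det A[\emptyset]=1$. Reading off the coefficient of $t^{n-i}$ gives $c_i$ as claimed.

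There is no real obstacle here. The only points needing care are the sign bookkeeping for $(-A)[I]$ and the observation that the empty-set convention produces the leading coefficient $1$.
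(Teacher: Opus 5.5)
Your proposal is correct and follows exactly the paper's route: apply the determinantal-polynomial expansion to $-A$, specialize $x_1=\dots=x_n=t$, and use $\det((-A)[I])=(-1)^{|I|}\det A[I]$ to collect the coefficient of $t^{n-i}$. The sign bookkeeping and the empty-set convention for the leading coefficient are handled correctly.
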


    For the remainder, we focus on finding a relation between $f_A$, $f_{P_E}$ and $f_{P_F}$.
    
    \begin{lem}
    	For any square matrix $A$ indexed by a set $S\subset [n]$ and $u\in \CCC^S$,
    	\[
    		f_{A+\alpha uu^*}=f_A+\alpha a_{A,u}.
    	\]
    	\label{prop:determinantal-poly-of-cut}
    \end{lem}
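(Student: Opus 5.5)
This is the matrix determinant lemma for a rank-one update, applied to $M\coloneqq A+D_S(x)$ over the polynomial ring $\CCC[x_s:s\in S]$. Since $f_{A+\alpha uu^*}=\det(M+\alpha uu^*)$ and $a_{A,u}=u^*\mathrm{adj}(M)u$ by definition, it suffices to prove the polynomial identity
\[
	\det(M+\alpha uu^*)=\det M+\alpha\, u^*\mathrm{adj}(M)u
\]
for every square matrix $M$ over a commutative ring and every vector $u$.

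First we handle the case where $M$ is invertible. From the factorization $M+\alpha uu^*=M(I+\alpha M^{-1}uu^*)$ and the identity $\det(I+xy^*)=1+y^*x$, which holds because $xy^*$ has rank at most one and its only possibly nonzero eigenvalue is $y^*x$, we get
\[
	\det(M+\alpha uu^*)=\det M\,(1+\alpha u^*M^{-1}u)=\det M+\alpha\, u^*\big(\det M\cdot M^{-1}\big)u .
\]
Since $\det M\cdot M^{-1}=\mathrm{adj}(M)$, this is exactly the claimed identity.

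To remove the invertibility assumption, we use a density or generic-point argument. Both sides are polynomial in the entries of $M$ (and in $\alpha$ and the entries of $u$, $\bar u$). Indeed, the adjugate entries are signed minors, and the paper's indexing convention for $\mathrm{adj}$ on $S$ is just the usual adjugate after relabelling $S=\{s_1<\dots<s_m\}$ as $[m]$. Invertible matrices are Zariski dense, so the identity holds for all complex $M$, and therefore as a polynomial identity in the matrix entries. Specializing the entries to those of $A+D_S(x)$ gives the statement in $\CCC[x_s:s\in S]$.

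Alternatively, we can avoid inverses entirely. Since $\det(A+D_S(x))$ is a monic polynomial in $x$ and hence a nonzero element of the field $\CCC(x_s)$, $M$ is invertible over that fraction field, and the first argument applies verbatim there.

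There is no real obstacle in this proof. The only point that needs care is checking that the paper's adjugate convention on an arbitrary index set $S$ agrees with the standard adjugate under the order-preserving relabelling of $S$ as $[m]$. Once that is checked, $\mathrm{adj}(M)M=\det M\cdot I$ holds as usual, and the argument above goes through unchanged.
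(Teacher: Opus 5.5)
Your proof is correct, but it takes a different route from the paper.

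\emph{The paper's route.} The paper never inverts anything. It writes the columns of $M+\alpha uu^*$ as $M_j+\alpha\bar u_j u$ and expands by multilinearity; every term containing two or more copies of $u$ vanishes. A cofactor expansion along the replaced column then gives
\[
\alpha\sum_{i,j}(-1)^{i+j}\bar u_j u_i\det M[\{i\}^c|\{j\}^c]=\alpha\, u^*\mathrm{adj}(M)u .
\]
This argument works verbatim over any commutative ring. It therefore applies directly to $M=A+D_S(x)$, with no invertibility hypothesis and no extension step.

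\emph{Your route.} You recognize the statement as the matrix determinant lemma, prove it for invertible $M$, and then extend to all $M$. This is shorter and more conceptual. It is also essentially the Schur-complement/Sylvester-plus-density technique the paper itself uses to prove Theorem \ref{thm:determinantal-poly-of-full-matrix-with-cut}. The cost is the extension step, which the paper's column expansion avoids.

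\emph{Which extension to use.} Your fraction-field variant is the more economical of the two. It needs only $\det(A+D_S(x))\neq 0$ in $\CCC[x_s:s\in S]$, which holds because the monomial $\prod_s x_s$ has coefficient $1$. In that setting, your eigenvalue justification of $\det(I+xy^*)=1+y^*x$ should be read over an algebraic closure of $\CCC(x_s:s\in S)$. Alternatively, you can replace it with Sylvester's identity $\det(I_m+xy^*)=\det(I_1+y^*x)$. A third option uses Proposition \ref{prop:coefficients-of-determinantal-poly-are-minors} with all $x_i=1$: $\det(I+N)$ is the sum of the principal minors of $N$, and all principal minors of order at least $2$ of a rank-one matrix vanish.
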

    \vspace{-0.8cm}
    \begin{proof}
    	For any square matrix $M$ whose columns are $M_1,\dots, M_n$, we can use multilinearity of the determinant to have
    	\begin{align*}
    		\det(M+\alpha uu^*) &=\det(M_1+\alpha \bar{u}_1u,\dots, M_n+\alpha\bar{u}_n u)\\
    		&=\det(M)+\alpha \sum_{j=1}^n \bar{u}_j\det(M_1,\dots, M_{j-1},u,M_{j+1},\dots, M_n),
    	\end{align*}
    	where the terms with more than one $u$ vanish.
    	For each $j$, do a cofactor expansion along the $j$th column to have
    	\begin{align*}
    		\det(M+\alpha uu^*) &=\det(M)+\alpha \sum_{i,j=1}^n (-1)^{i+j}\bar{u}_ju_i\det M[\{i\}^c|\{j\}^c]\\
    		&=\det(M)+\alpha \sum_{i,j=1}^n \bar{u}_ju_i\mathrm{adj}(M)_{j,i} \\
    		&=\det(M)+\alpha u^* \mathrm{adj}(M)u.
    	\end{align*}
    	Taking $M=A+D_S(x)$ concludes the proof.
    \end{proof}
    
    Let us compute the determinantal polynomial of
    \[
		A=\begin{pmatrix}
			P_E+\alpha uu^* & \sqrt{\alpha\beta}uv^*\\
			\sqrt{\alpha\beta}vu^* & P_F+\beta vv^*
		\end{pmatrix}.
	\]
	We have
	\begin{align*}
		f_A &= \det(D(x)+A)\\
		&=\det\begin{pmatrix}
    			\displaystyle D_I(x) + P_E+\alpha uu^* & \displaystyle\sqrt{\alpha\beta}uv^*\\
    			\displaystyle \sqrt{\alpha\beta}vu^* & \displaystyle D_{I^c}(x)+P_F+\beta vv^*
    		\end{pmatrix}.
	\end{align*}
	Over the set of all $x$ such that the diagonal blocks are invertible, we can apply the Schur determinant formula to have
	\begin{align*}
		f_A &=\det(D_I(x) + P_E+\alpha uu^*)\det(D_{I^c}(x)+P_F+\beta vv^*)\\
		&\cdot \det(I-{\alpha\beta}vu^*(D_I(x) + P_E+\alpha uu^*)^{-1}uv^*(D_{I^c}(x)+P_F+\beta vv^*)^{-1}).
	\end{align*}
	Then Sylvester's determinant identity allows us to write this last factor as
	\[
		1-\alpha \beta \Big(v^*(D_{I^c}(x)+P_F+\beta vv^*)^{-1}v\Big)\Big( u^*(D_I(x) + P_E+\alpha uu^*)^{-1}u\Big)
	\]
	Since $M^{-1}=\det(M)^{-1}\mathrm{adj}(M)$ for any invertible matrix $M$, we have
	\begin{align*}
		(D_I(x) + P_E+\alpha uu^*)^{-1} &=\frac{1}{\det(D_I(x) + P_E+\alpha uu^*)}\mathrm{adj}(D_I(x) + P_E+\alpha uu^*)\\
		&=\frac{1}{f_{P_E+\alpha uu^*}}\mathrm{adj}(D_I(x) + P_E+\alpha uu^*).
	\end{align*}
	Continuing similarly for the other factors, we have
	\begin{align*}
		f_A 
		&=f_{P_E+\alpha uu^*}\cdot f_{P_F+\beta vv^*}-\alpha\beta a_{P_E+\alpha uu^*,u}\cdot a_{P_F+\beta vv^*,v}.
	\end{align*}
	This equation holds over all $x$ in the dense open set where $D_I(x) + P_E+\alpha uu^*$ and $D_{I^c}(x)+P_F+\beta vv^*$ are invertible. Hence, the equality above holds for all $x$.
	
	Theorem \ref{thm:determinantal-poly-of-full-matrix-with-cut} is then obtained from the following result.
	
	\begin{lem}
		For a square matrix $M$ indexed by $S\subset [n]$, we have $a_{M+\alpha uu^*,u}=a_{M,u}$.
	\end{lem}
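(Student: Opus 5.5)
We have to show that $u^*\mathrm{adj}(N+\alpha uu^*)u=u^*\mathrm{adj}(N)u$, where $N\coloneqq D_S(x)+M$. Both sides are polynomials in the entries of $N$ and in $x$. It therefore suffices to prove the identity for an arbitrary invertible matrix $N$ and an arbitrary vector $u$, and then extend by density, exactly as in the Schur-complement computation above. The plan is to express the left side through determinants and inverses, and then to use the Sherman--Morrison formula together with the matrix determinant lemma.

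The first step uses the matrix determinant lemma, which is Lemma \ref{prop:determinantal-poly-of-cut} in the form $\det(N+\alpha uu^*)=\det N+\alpha\, u^*\mathrm{adj}(N)u$. Write $q\coloneqq u^*N^{-1}u$. Since $u^*\mathrm{adj}(N)u=\det(N)\,q$, this gives
\[
\det(N+\alpha uu^*)=\det(N)(1+\alpha q).
\]
The second step restricts further to the dense set where $1+\alpha q\neq 0$, so that $N+\alpha uu^*$ is also invertible. There Sherman--Morrison gives
\[
(N+\alpha uu^*)^{-1}=N^{-1}-\frac{\alpha N^{-1}uu^*N^{-1}}{1+\alpha q},
\]
and consequently
\[
u^*(N+\alpha uu^*)^{-1}u=q-\frac{\alpha q^2}{1+\alpha q}=\frac{q}{1+\alpha q}.
\]

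The third step combines these, using $\mathrm{adj}(X)=\det(X)X^{-1}$:
\[
u^*\mathrm{adj}(N+\alpha uu^*)u=\det(N)(1+\alpha q)\cdot\frac{q}{1+\alpha q}=\det(N)\,q=u^*\mathrm{adj}(N)u.
\]
Both sides are polynomials in $x$, so equality on a dense open set implies equality everywhere. Taking $N=D_S(x)+M$ then yields $a_{M+\alpha uu^*,u}=a_{M,u}$.

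The main point requiring care is the density argument. The set where $N$ is invertible and $1+\alpha q\neq0$ must be nonempty as a set of $x$. This holds because both conditions amount to the nonvanishing of $\det(D_S(x)+M)$ and of $\det(D_S(x)+M+\alpha uu^*)$. Each of these is a monic polynomial in the $x_s$, with leading term $\prod_s x_s$, so neither vanishes identically.
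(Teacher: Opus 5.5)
Your proof is correct, but it takes a different route from the paper. You work with inverses. The matrix determinant lemma gives $\det(N+\alpha uu^*)=\det(N)(1+\alpha q)$, and Sherman--Morrison gives $u^*(N+\alpha uu^*)^{-1}u=q/(1+\alpha q)$. The two factors of $1+\alpha q$ then cancel. You extend from the nonempty Zariski-open set where $D_S(x)+M$ and $D_S(x)+M+\alpha uu^*$ are both invertible by a polynomial-identity argument, and your justification via the monic leading term $\prod_s x_s$ is exactly what is needed there.

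The paper never inverts anything. It applies Lemma \ref{prop:determinantal-poly-of-cut} three times: to $M$ with scalars $\alpha$ and $2\alpha$, and to $M+\alpha uu^*$ with scalar $\alpha$. This amounts to observing that $t\mapsto f_{M+tuu^*}=f_M+t\,a_{M,u}$ is affine in $t$. Hence the ``slope'' $a_{M+\alpha uu^*,u}$ measured from the base point $M+\alpha uu^*$ equals the slope $a_{M,u}$ measured from $M$. Subtracting gives $\alpha\, a_{M+\alpha uu^*,u}=\alpha\, a_{M,u}$, and the case $\alpha=0$ is trivial.

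The paper's argument is shorter and is purely a polynomial identity, with no genericity or density step. Yours is longer, but it makes the mechanism explicit: the determinant is multiplied by $1+\alpha q$ while the quadratic form of the inverse is divided by it. It also matches the Schur-complement computation used for Theorem \ref{thm:determinantal-poly-of-full-matrix-with-cut}.
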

	\begin{proof}
		If $\alpha=0$ there is nothing to prove so take $\alpha\neq 0$.
		Using Lemma \ref{prop:determinantal-poly-of-cut} with $A=M+\alpha uu^*$ gives
		\begin{align*}
			\alpha a_{M+\alpha uu^*,u} &=f_{M+2\alpha uu^*}-f_{M+\alpha uu^*}\\
			&=f_M+2\alpha u^*\mathrm{adj}(D_S(x)+M)u-\big( f_M+\alpha u^* \mathrm{adj}(D_S(x)+M)u\big)\\
			&=\alpha a_{M,u}
		\end{align*}
		so $a_{M+\alpha uu^*,u}=a_{M,u}$.
	\end{proof}

\subsection{Low-Dimensional Cases}
	Loewy's theorem can be extended to $n=1,2,3$ by restricting to Hermitian matrices. 
	
	\begin{prop}
		If $n=1,2,3$ and $A,B$ are $n\times n$ Hermitian matrices that have the same corresponding principal minors, then there is a diagonal unitary matrix $T$ such that either $B=TAT^{-1}$ or $B^t=TAT^{-1}$.
		\label{prop:Loewy-for-low-n-orthogonal}
	\end{prop}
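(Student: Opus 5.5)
The plan is to prove Proposition \ref{prop:Loewy-for-low-n-orthogonal} by hand, extracting the relevant invariants from the principal minors of order $1$, $2$ and $3$ and then solving for the phases of a diagonal unitary $T=\diag(t_1,\dots,t_n)$, $|t_i|=1$. The key observation is that $(TAT^{-1})_{ij}=t_i\bar t_j a_{ij}$. So conjugating by $T$ leaves the diagonal and all moduli $|a_{ij}|$ fixed. It changes the phase of $a_{ij}$ by $t_i\bar t_j$, and it fixes cyclic products such as $a_{12}a_{23}a_{31}$. I will also use that for Hermitian $A$ we have $A^t=\bar A$. Moreover, if $B=TA^tT^{-1}$ then $B^t=T^{-1}AT$ with $T^{-1}$ again diagonal unitary, so it suffices to show that $B$ is diagonally unitarily similar to $A$ or to $\bar A$.

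First I extract the invariants. The $1\times 1$ minors give $a_{ii}=b_{ii}$. Since $A,B$ are Hermitian, $\det A[\{i,j\}]=a_{ii}a_{jj}-|a_{ij}|^2$, so the $2\times 2$ minors give $|a_{ij}|=|b_{ij}|$ for all $i\neq j$. This already settles $n=1$ with $T=1$. For $n=2$, if $a_{12}=0$ then $B=A$. Otherwise $b_{12}=e^{i\theta}a_{12}$, and $T=\diag(e^{i\theta},1)$ gives $B=TAT^{-1}$.

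For $n=3$, expanding the determinant of a Hermitian matrix gives
\[
\det A = a_{11}a_{22}a_{33}-a_{11}|a_{23}|^2-a_{22}|a_{13}|^2-a_{33}|a_{12}|^2+2\,\mathrm{Re}(a_{12}a_{23}a_{31}).
\]
Equality of all principal minors therefore yields $\mathrm{Re}(a_{12}a_{23}a_{31})=\mathrm{Re}(b_{12}b_{23}b_{31})$. The two cycle products also have equal moduli, so $b_{12}b_{23}b_{31}$ equals either $a_{12}a_{23}a_{31}$ or its complex conjugate. The latter is the cycle product of $\bar A=A^t$. Replacing $A$ by $\bar A$ if necessary (it has the same diagonal and moduli), I may assume the two cycle products are equal.

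It remains to find $T$ with $t_i\bar t_j a_{ij}=b_{ij}$ for all $i<j$; this is the main, though mild, obstacle because of zero entries. Consider the graph on $\{1,2,3\}$ whose edges are the pairs with $a_{ij}\neq 0$. Each such edge prescribes the phase $t_i\bar t_j=b_{ij}/a_{ij}$. If the graph has at most two edges it is a forest, so the phases can be assigned along it freely: fix $t$ at one vertex and propagate. If all three edges are present, choose $t_1,t_2,t_3$ to satisfy the conditions on the edges $12$ and $23$. The condition on the edge $31$ is then forced by the equality of cycle products, since $t_3\bar t_1 a_{31}=b_{12}b_{23}b_{31}/(t_1\bar t_2a_{12}\,t_2\bar t_3a_{23})\cdot a_{12}a_{23}a_{31}/(a_{12}a_{23}a_{31})$, which simplifies to $b_{31}$. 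Entries with $a_{ij}=0$ automatically match because $b_{ij}=0$. This gives $B=TAT^{-1}$ or $B=T\bar AT^{-1}=TA^tT^{-1}$, and by the first paragraph the latter yields the stated form.
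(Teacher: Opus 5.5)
Your proof is correct and follows essentially the same route as the paper: you read off $a_{ii}=b_{ii}$ and $|a_{ij}|=|b_{ij}|$ from the $1\times 1$ and $2\times 2$ minors, use the $3\times 3$ determinant to match $\Re(a_{12}a_{23}a_{31})$, and then solve for the phases of $T$, passing to $\bar{A}=A^t$ when the triangle products are conjugate rather than equal. The differences are only in bookkeeping. You use the gauge-invariant cycle product and propagate phases along a forest to handle all zero patterns at once, whereas the paper normalizes $a_{12},a_{23}>0$ and splits into three cases. Your displayed identity for the edge $31$ would also read more cleanly as $t_3\bar{t}_1a_{31}=a_{12}a_{23}a_{31}/(b_{12}b_{23})=b_{31}$.
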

	
	To prove this, we first note the following result which we will be constantly using without appeal.

	\begin{lem}
        Suppose $A,B$ are Hermitian matrices having the same corresponding principal minors of all orders. Then,
        \begin{enumerate}
            \item for all $i$, $a_{ii}=b_{ii}$, and
            \item $|a_{ij}|=|b_{ij}|$ for all $i\neq j$.
        \end{enumerate}
        In particular, $a_{ij}=0$ if and only if $b_{ij}=0$, so that $A$ is irreducible if and only if $B$ is irreducible.
        \label{lem:components-of-hermitian-with-same-minors}
    \end{lem}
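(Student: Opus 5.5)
The argument reads off principal minors of orders one and two, then uses the Hermitian property.

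For part (1), the $1\times 1$ principal minors are exactly the diagonal entries, since $\det A[\{i\}]=a_{ii}$ and $\det B[\{i\}]=b_{ii}$. The hypothesis therefore gives $a_{ii}=b_{ii}$ directly.

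For part (2), fix $i\neq j$ and compare the $2\times 2$ principal minors on $\{i,j\}$:
\[
\det A[\{i,j\}]=a_{ii}a_{jj}-a_{ij}a_{ji}.
\]
Because $A$ is Hermitian, $a_{ji}=\bar a_{ij}$, so $a_{ij}a_{ji}=|a_{ij}|^2$. Hence
\[
\det A[\{i,j\}]=a_{ii}a_{jj}-|a_{ij}|^2,
\]
and likewise $\det B[\{i,j\}]=b_{ii}b_{jj}-|b_{ij}|^2$. The two minors are equal by hypothesis, and $a_{ii}a_{jj}=b_{ii}b_{jj}$ by part (1). Subtracting gives $|a_{ij}|^2=|b_{ij}|^2$, and therefore $|a_{ij}|=|b_{ij}|$.

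For the final claim, part (2) shows that $a_{ij}=0$ if and only if $b_{ij}=0$, so $A$ and $B$ have the same zero pattern.

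Reducibility depends only on the zero pattern. A matrix is reducible exactly when some permutation $P$ makes $PAP^{-1}$ block upper triangular. Equivalently, there is a nonempty proper subset $J\subset[n]$ with $A[J^c|J]=0$, where $J$ is the set of indices of the leading block.

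For Hermitian matrices this condition is symmetric: $A[J^c|J]=0$ holds if and only if $A[J|J^c]=0$. Since $A$ and $B$ vanish in the same positions, such a $J$ exists for $A$ if and only if it exists for $B$. So $A$ is irreducible if and only if $B$ is.

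The only step needing care is the equivalence between reducibility and the vanishing of an off-diagonal block $A[J^c|J]$ for some nonempty proper $J$. This is standard, and once it is in place the zero-pattern argument finishes the proof.
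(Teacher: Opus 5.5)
Your proof is correct and is the paper's argument: the $1\times 1$ minors give $a_{ii}=b_{ii}$, and the Hermitian $2\times 2$ minors $a_{ii}a_{jj}-|a_{ij}|^2$ then give $|a_{ij}|=|b_{ij}|$. Your zero-pattern argument spells out the irreducibility claim, which the paper leaves implicit. The Hermitian symmetry is not needed at that step, since equal zero patterns alone give $A[J^c|J]=0$ if and only if $B[J^c|J]=0$.
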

    \begin{proof}
        We have that $a_{ii}=\det A[\{i\}]=\det B[\{i\}]=b_{ii}$, and
        \[
            a_{ii}a_{jj}-|a_{ij}|^2=\det A[\{i,j\}]=\det B[\{i,j\}]=b_{ii}b_{jj}-|b_{ij}|^2.
        \]
        Hence, $|a_{ij}|=|b_{ij}|$ which concludes the proof.
    \end{proof}

	\begin{proof}[Proof of Proposition \ref{prop:Loewy-for-low-n-orthogonal}]
	 	For $n=1$, the proof is trivial of course.
	
		Consider the case where $n=2$ and let $A,B$ be two $2\times 2$ Hermitian matrices that have the same principal minors. Writing $a_{ij}$, $b_{ij}$ for the $(i,j)$-entry of these matrices, Lemma \ref{lem:components-of-hermitian-with-same-minors}  gives us that $|a_{12}|=|b_{12}|$. Hence, there is some $t\in S^1$ such that $b_{12}=ta_{12}$ and so
		\[
			B=\begin{pmatrix}
				b_{11} & b_{12}\\
				\bar{b}_{12} & b_{22}
			\end{pmatrix}=\begin{pmatrix}
				1 & 0\\
				0 & t^{-1}
			\end{pmatrix}\begin{pmatrix}
				a_{11} & a_{12}\\
				\bar{a}_{12} & a_{22}
			\end{pmatrix}\begin{pmatrix}
				1 & 0\\
				0 & t
			\end{pmatrix}=T^{-1}AT.
		\]
		
		Finally, for $n=3$, we can use Lemma \ref{lem:components-of-hermitian-with-same-minors} to write the determinant of the Hermitian matrix $A$:
		\[
			\det A=a_{11}a_{22}a_{33}+a_{12}a_{23}\bar{a}_{13}+\bar{a}_{12}a_{13}\bar{a}_{23}-a_{11}|a_{23}|^2-a_{22}|a_{13}|^2-a_{33}|a_{12}|^2.
		\]
		A similar formula holds for $\det B$ and by comparing the coefficients, we have that
		\[
			b_{12}b_{23}\bar{b}_{13}+\bar{b}_{12}b_{13}\bar{b}_{23}=a_{12}a_{23}\bar{a}_{13}+\bar{a}_{12}a_{13}\bar{a}_{23}
		\]
		or equivalently,
		\[
			\Re(b_{12}b_{23}\bar{b}_{13})=\Re(a_{12}a_{23}\bar{a}_{13}).
		\]
		There are three cases to consider depending on the number of vanishing off-diagonal elements.
		
		\emph{Case 1: $A$ is reducible.} Equivalently, at least two $a_{ij},a_{jk}$ vanish, for $i,j,k$ distinct. In this case, up to a simultaneous permutation of rows and columns, the matrix $A$ is block diagonal with blocks of size at most $2\times 2$. By the preceding case, the result holds.
		
		\emph{Case 2: Only one $a_{ij}=0$.} Up to a simultaneous permutation of the rows and columns, we can assume that $a_{13}=0$. Then let $t_{2}=b_{12}\bar{a}_{12}/|a_{12}b_{12}|$ and $t_{3}=b_{23}\bar{a}_{23}/|a_{23}b_{23}|$ so that
		\[
			t_{2}a_{12}=\frac{b_{12}}{|b_{12}|}|a_{12}|=b_{12}, \qquad t_{3}a_{23}=\frac{b_{23}}{|b_{23}|}|a_{23}|=b_{23}.
		\]
		Then, for $T=\diag(1,t_2,t_2t_3)$, we have $B=T^{-1}AT$.
		
		\emph{Case 3: All $a_{12},a_{23},a_{13}$ are nonzero.} By considering $D^{-1}AD$ with $D$ the diagonal matrix $D=\diag(1,\bar{a}_{12}/|a_{12}|,\bar{a}_{23}\bar{a}_{12}/|a_{23}a_{12}|)$, we can assume that $a_{12},a_{23}>0$. Similarly, we can conjugate $B$ to $\tilde{D}^{-1}B\tilde{D}$ so that  $b_{12},b_{23}>0$.  Then $\tilde{D}^{-1}B\tilde{D}$ and $D^{-1}AD$ have the same principal minors, so that $a_{12}=b_{12}, a_{23}=b_{23}$ and, after writing $a_{13}=re^{i\theta}$, $b_{13}=re^{i\theta'}$, the equality of their determinants gives $\cos(\theta)=\cos(\theta')$. Therefore, $\theta'=\pm\theta$ mod $2\pi$. In the first case, $b_{13}=a_{13}$ so
		\[
			B=(D\tilde{D}^{-1})^{-1}A(D\tilde{D}^{-1}).
		\]
		In the second case, $b_{13}=\bar{a}_{13}$ so that
		\[
			B=\tilde{D}\bar{D^{-1}AD}\tilde{D}^{-1}=(\tilde{D}{D}) A^t\bar{(\tilde{D}{D})}.
		\]
		Hence, for $T=(\tilde{D}D)^{-1}$, we have $B^t=T A T^{-1}$.
	 \end{proof}

\subsection{Higher Dimensional Cases}
	In the case of $n\ge 4$ and $A$ and $B$ orthogonal projection matrices, the diagonal matrix in Loewy's theorem can be taken to be unitary.
	
	\begin{prop}
        Suppose $A,B$ are $n\times n$ Hermitian matrices, $n\ge 4$, which have the same corresponding principal minors of all orders. Suppose also that $A$ is irreducible and has no cuts.
        Then there exists a diagonal unitary matrix $T$ such that either $B=TAT^{-1}$ or $B^t=TAT^{-1}.$
        \label{prop:diagonals-in-Loewy-are-unitary}
    \end{prop}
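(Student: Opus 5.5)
We apply Loewy's theorem directly: since $A$ is irreducible, has no cuts, $n\ge 4$, and $A,B$ have the same principal minors, there is an invertible diagonal $D=\diag(d_1,\dots,d_n)$ with $B=DAD^{-1}$ or $B=DA^tD^{-1}$. We then show that $D$ may be replaced by a diagonal unitary matrix. Put $d_i=r_it_i$ with $r_i>0$ and $t_i\in S^1$, and write $D=RT$ with $R=\diag(r_i)$ and $T=\diag(t_i)$.

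Consider first the case $B=DAD^{-1}$. Entrywise this reads
\[
b_{ij}=\frac{d_i}{d_j}a_{ij}.
\]
By Lemma \ref{lem:components-of-hermitian-with-same-minors}, $|b_{ij}|=|a_{ij}|$. Hence for every pair with $a_{ij}\neq 0$ we get $|d_i/d_j|=1$, that is, $r_i=r_j$.

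Now form the graph on $[n]$ with an edge $ij$ whenever $a_{ij}\neq 0$. Since $A$ is Hermitian, $a_{ij}=0$ if and only if $a_{ji}=0$. Irreducibility of $A$ therefore says exactly that this graph is connected. Indeed, if the vertex set split as $S\sqcup S^c$ with no edges between the parts, then $A[S|S^c]=0$, and a simultaneous permutation would make $A$ block diagonal, hence block upper triangular. Along paths in this connected graph the equalities $r_i=r_j$ propagate, so all $r_i$ equal a common value $r>0$.

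Consequently $D=rT$, and
\[
B=DAD^{-1}=(rT)A(rT)^{-1}=TAT^{-1}.
\]
The transposed case is identical. From $B=DA^tD^{-1}$ we have $|b_{ij}|=|d_i/d_j|\,|a_{ji}|$, and $|a_{ji}|=|a_{ij}|=|b_{ij}|$. The same connectivity argument gives $D=rT$, so $B=TA^tT^{-1}$.

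It remains to convert this to the form in the statement, namely $B^t=TAT^{-1}$ for a diagonal unitary $T$. Transposing $B=TA^tT^{-1}$ gives
\[
B^t=T^{-1}AT=T'A(T')^{-1},\qquad T'\coloneqq T^{-1},
\]
and $T'$ is again diagonal unitary.

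I expect no step to be a real obstacle. The one point needing care is that Loewy's hypotheses, as stated in the paper, match ours, including his notion of irreducibility and his cut condition. Since the paper's definitions are taken from Loewy, this should be immediate.
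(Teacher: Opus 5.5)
Your proposal is correct and follows essentially the same route as the paper. You apply Loewy's theorem, use $|a_{ij}|=|b_{ij}|$ to get $|d_i|=|d_j|$ whenever $a_{ij}\neq 0$, propagate this along paths using irreducibility, and absorb the common modulus. The only cosmetic difference is in the transposed case: the paper uses Hermiticity of $B$ to write $B=\overline{DAD^{-1}}$, whereas you work with $B=DA^tD^{-1}$ and Hermiticity of $A$, and then transpose at the end.
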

    \begin{proof}
        The assumptions guarantee that Loewy's theorem can be applied, so there exists an invertible diagonal matrix $D$ such that either $B=DAD^{-1}$ or $B^t=DAD^{-1}$. In the first case, for $i,j$ distinct with $a_{ij}\neq 0$,
        \[
            b_{ij}=\frac{d_i}{d_j}a_{ij}.
        \]
        Taking absolute values and using Lemma \ref{lem:components-of-hermitian-with-same-minors},
        \[
            |a_{ij}|=|b_{ij}|=\frac{|d_i|}{|d_j|}|a_{ij}|
        \]
        so that $|d_i|=|d_j|$. Since $A$ is irreducible, for any $k,i$ we can find $i_0=k,i_1,\dots, i_l=i$ such that $a_{i_r,i_{r+1}}\neq 0$ and so $|d_k|=|d_{i_1}|=\dots =|d_{i_l}|=|d_i|$. It follows that $D=c\diag(t_1,\dots, t_n)$ for $t_i\in S^1$ and
        \[
            B=DAD^{-1}=cTA\left(\frac{1}{c}T^{*}\right)=TAT^*,
        \]
        where $T=\diag(t_1,\dots, t_n)$.

        If $B^t=DAD^{-1}$, then, since $B$ is Hermitian, $B=B^*=\bar{B^t}=\bar{DAD^{-1}}$. Thus,
        \[
            b_{ij}=\frac{\bar{d_i}}{\bar{d_j}}\bar{a_{ij}}
        \]
        and the same computation as above says that $|d_i|=|d_j|$ for all $i,j$. Hence, $B^t=TAT^{-1}$ for some unitary diagonal matrix $T$.
    \end{proof}
    
\subsection{Simultaneous Permutations}\label{subsec:minors-of-permutations}
    From the definition of irreducibility and cuts, we see that the problem takes a simpler form if we conjugate by a permutation matrix. In the case of a cut $I$ with $|I|=k$, this is apparent by taking a permutation $\sigma$ such that $\sigma([k])=I$. We show that this simplification does not affect the equality on principal minors and can thus be used in the proof of Theorem \ref{thm:same-principal-minors-have-equal-image} without repercussions.
    
    \begin{prop}
        Let $A$ be an $n\times n$ matrix, $P$ a permutation matrix associated to $\sigma\in S_n$, and $S\subset [n]$. Then there exists a permutation matrix $R$ such that
        \[
            (PAP^{-1})[S]=RA[\sigma(S)]R^{-1}.
        \]
        \label{prop:submatrix-of-conjugation}
    \end{prop}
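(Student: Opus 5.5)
The proof is a direct index computation. I will fix a convention for the permutation matrix, write out the entries of both sides explicitly, and read off $R$ as the permutation matrix of the order-reordering induced by $\sigma$ on $S$.

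\textbf{Convention.} I take $P$ to be the permutation matrix with $P^{-1}e_i=P^te_i=e_{\sigma(i)}$ for all $i$. Then
\[
(PAP^{-1})_{ij}=(P^te_i)^tA(P^te_j)=a_{\sigma(i)\sigma(j)}.
\]
This is the convention that matches the statement's use of $\sigma(S)$. With the opposite convention, $P e_j=e_{\sigma(j)}$, one gets $(PAP^{-1})_{ij}=a_{\sigma^{-1}(i)\sigma^{-1}(j)}$, and the identical argument applies with $\sigma(S)$ replaced by $\sigma^{-1}(S)$. So I state the convention explicitly at the outset.

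\textbf{Step 1: entries of the left-hand side.} Write $S=\{s_1<\dots<s_m\}$ and $\sigma(S)=\{t_1<\dots<t_m\}$. The $(p,q)$ entry of $(PAP^{-1})[S]$ is $a_{\sigma(s_p)\sigma(s_q)}$.

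\textbf{Step 2: the reordering permutation.} Because $\sigma$ restricts to a bijection $S\to\sigma(S)$, there is a unique $\tau\in S_m$ with $\sigma(s_p)=t_{\tau(p)}$ for each $p$. The permutation $\tau$ records how $\sigma$ scrambles the increasing order of $S$ when it lands in $\sigma(S)$.

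\textbf{Step 3: comparing with the right-hand side.} Let $R$ be the $m\times m$ permutation matrix with $R^{-1}e_p=R^te_p=e_{\tau(p)}$. The same computation as in the convention paragraph gives
\[
\big(RA[\sigma(S)]R^{-1}\big)_{pq}=A[\sigma(S)]_{\tau(p)\tau(q)}=a_{t_{\tau(p)}t_{\tau(q)}}=a_{\sigma(s_p)\sigma(s_q)}.
\]
This agrees with the entry found in Step 1 for every $p,q$, which proves the identity.

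\textbf{Main obstacle.} There is no real mathematical difficulty. The only care needed is the bookkeeping of positions versus labels, the same issue the paper flagged for the adjugate: the rows of $A[\sigma(S)]$ are indexed by position in the increasing order of $\sigma(S)$, not by the labels $\sigma(s_p)$ themselves. The permutation $\tau$ is exactly what reconciles the two orderings.
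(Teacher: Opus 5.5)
Your proposal is correct and is essentially the paper's own argument: you use the same convention ($P_{ij}=1$ iff $j=\sigma(i)$, equivalently $P^{t}e_i=e_{\sigma(i)}$), the same reordering permutation $\tau$ with $\sigma(s_p)=t_{\tau(p)}$, and the same $R$ with $R_{p,\tau(p)}=1$, and then compare entries. Your remark that the opposite convention would replace $\sigma(S)$ by $\sigma^{-1}(S)$ is a correct clarification that the paper leaves implicit.
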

    \vspace{-0.9cm}
    \begin{proof}
        A permutation matrix $P$ associated to a permutation $\sigma\in S_n$ has components $P_{ij}=1$ if $j=\sigma(i)$ and $P_{ij}=0$ otherwise. Write $S=\{s_1<\dots<s_k\}$, so for any $i,j\in [k]$,
        \begin{align*}
            \left(\left(PAP^{-1}\right)[S]\right)_{ij} &=(PAP^{-1})_{s_i,s_j}
            = \sum_{r,s=1}^n P_{s_i,r}A_{r,s}(P^{-1})_{s,s_j}
            =A_{\sigma(s_i),\sigma(s_j)}.
        \end{align*}
        Writing $\sigma(S)=\{t_1<\dots<t_k\}$ and $\sigma(s_i)=t_{\tau(i)}$ for some $\tau\in S_k$, we have
        \begin{align*}
            \left(\left(PAP^{-1}\right)[S]\right)_{i,j} &=A_{\sigma(s_i),\sigma(s_j)}
            =A_{t_{\tau(i)},t_{\tau(j)}}
            =A[\sigma(S)]_{\tau(i),\tau(j)}\\
            &=\sum_{r,s=1}^kR_{i,r}A[\sigma(S)]_{r,s} R_{j,s},
        \end{align*}
        where $R$ is the permutation matrix with $R_{i,\tau(i)}=1$ and $R_{i,j}=0$ otherwise. Thus,
        \[
            (PAP^{-1})[S]=RA[\sigma(S)]R^{-1}.
        \]
    \end{proof}

    \begin{cor}
        If $A$ and $B$ have the same corresponding principal minors, and $P$ is a permutation matrix, then $PAP^{-1}$ and $PBP^{-1}$ have the same principal minors of all orders.
        \label{cor:same-principal-minors-after-permutation}
    \end{cor}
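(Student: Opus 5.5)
The plan is to reduce the statement directly to Proposition \ref{prop:submatrix-of-conjugation}. Let $\sigma\in S_n$ be the permutation attached to $P$, and fix an arbitrary subset $S\subset[n]$. We must show that
\[
\det\big((PAP^{-1})[S]\big)=\det\big((PBP^{-1})[S]\big).
\]
Proposition \ref{prop:submatrix-of-conjugation} gives a permutation matrix $R$ with $(PAP^{-1})[S]=RA[\sigma(S)]R^{-1}$. The key point is that $R$ is built only from $S$ and $\sigma$: in its proof, $\tau\in S_k$ is defined by $\sigma(s_i)=t_{\tau(i)}$, and no entry of $A$ is used. Hence the same $R$ also gives $(PBP^{-1})[S]=RB[\sigma(S)]R^{-1}$.

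Since the determinant is invariant under conjugation, we get
\[
\det\big((PAP^{-1})[S]\big)=\det A[\sigma(S)]\quad\text{and}\quad \det\big((PBP^{-1})[S]\big)=\det B[\sigma(S)].
\]
By hypothesis $\det A[\sigma(S)]=\det B[\sigma(S)]$, because $\sigma(S)$ is again a subset of $[n]$ of the same cardinality. This gives the required equality. The case $S=\emptyset$ is trivial, since both minors equal $1$ by convention.

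There is essentially no obstacle here. The only point needing care is the observation above that $R$ depends only on $(\sigma,S)$ and not on the matrix, so it is legitimate to use the same $R$ for $A$ and $B$. This even matters little: all that is actually used is that the principal minor of $PAP^{-1}$ indexed by $S$ equals the principal minor of $A$ indexed by $\sigma(S)$, and the same holds for $B$.

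As an optional alternative, one could phrase the argument through Proposition \ref{prop:coefficients-of-determinantal-poly-are-minors}. Observe that
\[
f_{PAP^{-1}}(x)=\det\big(P(A+P^{-1}D(x)P)P^{-1}\big)=f_A(x_{\sigma^{-1}(1)},\dots,x_{\sigma^{-1}(n)}),
\]
up to the appropriate relabeling of variables. Consequently $f_A=f_B$ implies $f_{PAP^{-1}}=f_{PBP^{-1}}$. I would present the first argument as the main proof.
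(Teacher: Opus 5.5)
Your proof is correct and is essentially the paper's argument: Proposition \ref{prop:submatrix-of-conjugation} gives $\det\big((PAP^{-1})[S]\big)=\det A[\sigma(S)]$ and likewise for $B$, and the hypothesis then gives the equality for every $S\subset[n]$. Your remark that $R$ depends only on $(\sigma,S)$ is harmless but, as you note yourself, not needed.
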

    \begin{proof}            
        If $A$ and $B$ have the same principal minors, then by Proposition \ref{prop:submatrix-of-conjugation},
        \[
            \det\Big((PAP^{-1})[S]\Big)=\det(A[\sigma(S)])=\det(B[\sigma(S)])=\det\Big((PBP^{-1})[S]\Big)
        \]
        for all $S\subset [n]$, as desired.
    \end{proof}


\section{Proof of Theorem \ref{thm:same-principal-minors-have-equal-image}}\label{sec:proof-of-thm-11}
	We prove Theorem \ref{thm:same-principal-minors-have-equal-image} by induction on $n$. 
\subsection{Base Cases $n=1,2,3$}\label{subsec:bases-cases-of-thm-11}
	For $n=1,2,3$, Proposition \ref{prop:Loewy-for-low-n-orthogonal} tells us there exist a diagonal unitary matrix $T$ with either $B=TAT^{-1}$ or $B^t=TAT^{-1}$. In the first case,
    \[
    	|\im B|=|\im (TAT^{-1})|=|TA(\im T^{-1})|=|TA(\CCC^n)|=|T(\im A)|.
    \]
    Since $T$ acts by coordinatewise multiplication of by elements of $S^1$, its action is invariant under the absolute value so
    \[
    	|\im B|=|\im A|.
    \]
    The second case is similar.

	Now assume that Theorem \ref{thm:same-principal-minors-have-equal-image} holds for any two $k\times k$ orthogonal projection matrices with $k<n$ that have the same corresponding principal minors. Let $A$ and $B$ be two $n\times n$ orthogonal projection matrices, $n\ge 4$, that have the same corresponding principal minors.
	We must consider three cases:
	\begin{enumerate}
		\item $A$ is reducible,
		\item $A$ is irreducible and has a cut, and
		\item $A$ is irreducible and has no cuts.
	\end{enumerate}
	We first prove the result for the cases (1) and (3).

\subsection{The Reducible Case}
	Suppose $A$ is reducible. Then there exists a permutation matrix $P$ such that
	\[
		P^{-1}AP=\begin{pmatrix}
			A_1 & 0\\
			0 &  A_2
		\end{pmatrix}
	\]
	where the upper right off-diagonal must also vanish by the Hermitian property, since $P^{-1}=P^t$.
	If $B$ has the same corresponding principal minors of all orders as $A$, then $P^{-1}BP$ has the same corresponding principal minors of all orders as $P^{-1}AP$. By Lemma \ref{lem:components-of-hermitian-with-same-minors}, the same off-diagonal blocks must vanish so that
	\[
		P^{-1}BP=\begin{pmatrix}
			B_1 & 0\\
			0 & B_2
		\end{pmatrix}.
	\]
	Each $B_i$ is also an orthogonal projection defined on a smaller coordinate subspace and its principal minors coincide with those of $A_i$. Hence, by the induction hypothesis, $|\im A_i|=|\im B_i|$. Therefore, since the absolute value is equivariant under permutations,
	\begin{align*}
		|\im A| &=P\left(|\im A_1|\oplus |\im A_2|\right)
		=P\left(|\im B_1| \oplus |\im B_2|\right)
		=|\im B|.
	\end{align*}
	This proves Theorem \ref{thm:same-principal-minors-have-equal-image} for the case where $A$ is reducible.

\subsection{The Irreducible Case with No Cuts}
    Here we are in the setting of Loewy's theorem so, by Proposition \ref{prop:diagonals-in-Loewy-are-unitary}, there exists a diagonal unitary matrix $T$ with either $B=TAT^{-1}$ or $B^t=TAT^{-1}$. By the same computation of Section \ref{subsec:bases-cases-of-thm-11}, we obtain the result.

\subsection{The Irreducible Case with a Cut}
	Suppose now that $A$ is irreducible and has a cut $I$, and write $W_1\coloneqq \im A$. Let $E=W_1\cap \CCC^I$ and $F=W_1\cap \CCC^{I^c}$.
	By Theorem \ref{thm:cut-decomposition}, we can write
	\[
		A[I]=P_E+\alpha uu^*,\quad A[I^c]=P_F+(1-\alpha) vv^*,\quad A[I|I^c]=\sqrt{\alpha(1-\alpha)}uv^*
	\]
	with $\alpha\in (0,1)$, and $u\in \CCC^I\cap E^\perp$, $v\in \CCC^{I^c}\cap F^\perp$ unit vectors, where $P_E,P_F$ are the orthogonal projection matrices onto subspaces $E\subset \CCC^I$, $F\subset \CCC^{I^c}$, respectively. Since simultaneous permutations preserve equality of principal minors (see Corollary \ref{cor:same-principal-minors-after-permutation}) and the absolute value map is equivariant under these, we can assume that $I=[k]$ so
	\[
		A=\begin{pmatrix}
			P_E+\alpha uu^* & \sqrt{\alpha(1-\alpha)}uv^*\\
			\sqrt{\alpha(1-\alpha)}vu^* & P_F+(1-\alpha) vv^*
		\end{pmatrix}.
	\]
	Since $B^2=B$, we have 
	\begin{align*}
		\begin{pmatrix}
			B[I] & B[I|I^c]\\
			B[I^c|I] & B[I^c]
		\end{pmatrix}&=B=B^2=\begin{pmatrix}
			B[I] & B[I|I^c]\\
			B[I^c|I] & B[I^c]
		\end{pmatrix}^2\\
		&=\begin{pmatrix}
			B[I]^2+B[I|I^c]B[I^c|I] & B[I]B[I|I^c]+B[I|I^c]B[I^c]\\
			B[I^c|I]B[I]+B[I^c]B[I^c|I] & B[I^c|I]B[I|I^c]+B[I^c]^2
		\end{pmatrix}
	\end{align*}	
	The equality of the upper left block gives the following.

	\begin{lem}
		The subset $I$ is a cut of $B$.
	\end{lem}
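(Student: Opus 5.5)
We need to show that $\rank B[I|I^c]\le 1$; since $B$ is Hermitian, $B[I^c|I]=B[I|I^c]^*$ has the same rank, so this makes $I$ a cut of $B$. The sizes $2\le|I|\le n-2$ carry over automatically. The plan is to read off this rank from the spectrum of $B[I]$ and then transfer spectral information from $A[I]$ to $B[I]$ via principal minors.

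The upper left block of $B=B^2$ gives
\[
B[I|I^c]B[I|I^c]^*=B[I|I^c]B[I^c|I]=B[I]-B[I]^2.
\]
Hence
\[
\rank B[I|I^c]=\rank\big(B[I]-B[I]^2\big),
\]
and this equals the number of eigenvalues $\lambda$ of the Hermitian matrix $B[I]$, counted with multiplicity, with $\lambda(1-\lambda)\neq 0$. So it suffices to show that $B[I]$ has at most one eigenvalue outside $\{0,1\}$.

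Next we transfer spectra. The principal minors of $B[I]$ are exactly the principal minors $\det B[J]$ of $B$ with $J\subset I$. By hypothesis these equal $\det A[J]$, which are the principal minors of $A[I]$. By Corollary \ref{cor:coeff-of-char-poly}, the characteristic polynomial of a matrix depends only on sums of its principal minors, so $A[I]$ and $B[I]$ have the same characteristic polynomial. (If $I\neq[k]$, we first apply Proposition \ref{prop:submatrix-of-conjugation} to reduce to $I=[k]$, but this is cosmetic.) Therefore $B[I]$ has the same eigenvalues as $A[I]$, with multiplicities.

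By Theorem \ref{thm:cut-decomposition}(4),(6), the eigenvalues of $A[I]$ are $0$, $1$, and a single $\alpha\in(0,1)$. The same then holds for $B[I]$, so $\rank(B[I]-B[I]^2)=1$ and hence $\rank B[I|I^c]=\rank B[I^c|I]=1$. This shows $I$ is a cut of $B$; it also shows the eigenvalue $\alpha$ is shared by $A[I]$ and $B[I]$.

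There is no real obstacle. The only point needing care is the identification $B[I|I^c]B[I|I^c]^*=B[I]-B[I]^2$, which uses that $B$ is Hermitian, together with the fact that principal minors of a principal submatrix are principal minors of the whole matrix.
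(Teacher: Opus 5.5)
Your proof is correct and follows essentially the same route as the paper. Equal principal minors give $A[I]$ and $B[I]$ the same characteristic polynomial (Corollary \ref{cor:coeff-of-char-poly}); Theorem \ref{thm:cut-decomposition}(4)--(6) then leaves exactly one eigenvalue outside $\{0,1\}$; and the upper-left block of $B=B^2$ converts this into $\rank B[I|I^c]=\rank(B[I]-B[I]^2)=1$. The paper also notes that $B$ is irreducible (Lemma \ref{lem:components-of-hermitian-with-same-minors}), which the cut property does not need but which is used right afterwards to apply Theorem \ref{thm:cut-decomposition} to $B$.
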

	\begin{proof}
		By Lemma \ref{lem:components-of-hermitian-with-same-minors}, $B$ is irreducible since $A$ is, so we must only show that $I$ is a cut of $B$. Since $A[I]$ and $B[I]$ have the same principal minors, they have the same eigenvalues with multiplicity by Corollary \ref{cor:coeff-of-char-poly}. By Theorem \ref{thm:cut-decomposition}, $\alpha$ is the unique eigenvalue of $A[I]$ in $(0,1)$, counted with multiplicity. Hence, 
		\[
			1=\rank (B[I]-B[I]^2)=\rank B[I|I^c].
		\]
		Since $B$ is Hermitian, the opposite off-diagonal, $B[I^c|I]$, also has rank 1, so $I$ is a cut of $B$.
	\end{proof}
	
	Because of this, Theorem \ref{thm:cut-decomposition} applies so that $B$ has a decomposition similar to that of $A$: write $W_2=\im B$, $M=W_2\cap \CCC^I$, $N=W_2\cap \CCC^{I^c}$, so that
	\[
		B[I]=P_M+\alpha' pp^*,\quad B[I^c]=P_N+(1-\alpha') qq^*,\quad B[I|I^c]=\sqrt{\alpha'(1-\alpha')}pq^*
	\]
	for some $\alpha'\in (0,1)$, and $p\in M^\perp\cap \CCC^I$, $q\in N^\perp \cap \CCC^{I^c}$ unit vectors.
	Now, since $A[I]$ and $B[I]$ have the same eigenvalues, Theorem \ref{thm:cut-decomposition} gives $\alpha=\alpha'$. 
	We keep this notation for the rest of the section and proceed with the proof of Theorem \ref{thm:same-principal-minors-have-equal-image}.

		Introduce new coordinate vectors $e_0,e_0'$ and consider
		\begin{align*}
    		\EE_A &\coloneqq E\oplus \langle \sqrt{\alpha}u+e_0\rangle, \quad \EE_B \coloneqq M\oplus \langle \sqrt{\alpha}p+e_0\rangle\subset \CCC e_0\oplus \CCC^I,\\
    		 \FF_A &\coloneqq F\oplus \langle \sqrt{\beta}v+e_0'\rangle,\quad \FF_B \coloneqq N\oplus \langle \sqrt{\beta}q+e_0'\rangle\subset \CCC e_0'\oplus \CCC^{I^c},
    	\end{align*} 
        where $\beta=1-\alpha$.
    	If we show that $|\EE_A|=|\EE_B|$ and $|\FF_A|=|\FF_B|$, then, by Corollary \ref{cor:absolute-image-of-2-connected-subspace} we would be done. We only prove the first of these equalities as the second is similar. 
    	
    	\begin{lem}
    		Using the notation of Theorem \ref{thm:determinantal-poly-of-full-matrix-with-cut}, we have that
    		\[
    			f_{P_E}=f_{P_M},\qquad f_{P_F}=f_{P_N},\qquad a_{P_E,u}=a_{P_M,p},\qquad a_{P_F,v}=a_{P_N,q}.
    		\]
    	\end{lem}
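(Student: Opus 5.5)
The plan is to extract the four identities from the single equality $f_A=f_B$ by using Theorem \ref{thm:determinantal-poly-of-full-matrix-with-cut} and the fact that the variables $x_I$ and $x_{I^c}$ are disjoint. First, since $I=[k]$, the principal minors of $A[I]$ are principal minors of $A$, and likewise for $B[I]$. So Proposition \ref{prop:coefficients-of-determinantal-poly-are-minors} gives $f_{A[I]}=f_{B[I]}$ in $\CCC[x_i:i\in I]$, and similarly $f_{A[I^c]}=f_{B[I^c]}$. We also know that $\alpha'=\alpha$.

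Second, apply Theorem \ref{thm:determinantal-poly-of-full-matrix-with-cut} to both $A$ and $B$ and subtract. The products of the diagonal-block polynomials cancel, and we are left with
\[
a_{P_E,u}\, a_{P_F,v}=a_{P_M,p}\, a_{P_N,q}
\]
as polynomials in $\CCC[x_1,\dots,x_n]$. The factors $a_{P_E,u},a_{P_M,p}$ involve only the variables $x_I$, and $a_{P_F,v},a_{P_N,q}$ involve only $x_{I^c}$. Once all four are known to be nonzero, unique factorization together with the separation of variables gives a constant $c\neq 0$ with
\[
a_{P_E,u}=c\,a_{P_M,p},\qquad a_{P_F,v}=c^{-1}a_{P_N,q}.
\]
(Concretely, specialize the $x_{I^c}$ variables to a point where $a_{P_F,v}\neq 0$.)

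The main obstacle is showing that these polynomials are nonzero and that $c=1$. For this I would look at the top-degree part of $a_{M,u}=u^*\mathrm{adj}(D_I(x)+M)u$. The adjugate has entries of degree at most $|I|-1$, and its degree-$(|I|-1)$ part comes only from $\mathrm{adj}(D_I(x))=\diag\big(\prod_{j\neq i}x_j\big)$. Hence the homogeneous part of degree $|I|-1$ of $a_{P_E,u}$ is
\[
\sum_{i\in I}|u_i|^2\prod_{j\in I\setminus\{i\}}x_j ,
\]
whose coefficients sum to $\|u\|^2=1$. In particular $a_{P_E,u}\neq 0$. The same holds for $a_{P_M,p}$, since $p$ is also a unit vector. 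Comparing the sums of the top-degree coefficients in $a_{P_E,u}=c\,a_{P_M,p}$ then forces $c=1$. The same argument applies on the $I^c$ side.

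Finally, Lemma \ref{prop:determinantal-poly-of-cut} gives $f_{A[I]}=f_{P_E+\alpha uu^*}=f_{P_E}+\alpha a_{P_E,u}$ and $f_{B[I]}=f_{P_M}+\alpha a_{P_M,p}$. Combining these with $f_{A[I]}=f_{B[I]}$ and $a_{P_E,u}=a_{P_M,p}$ yields $f_{P_E}=f_{P_M}$. The analogous computation with $\beta=1-\alpha$ on $I^c$ gives $f_{P_F}=f_{P_N}$.
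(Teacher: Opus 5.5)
Your proposal is correct and follows essentially the same route as the paper. You subtract the two instances of Theorem~\ref{thm:determinantal-poly-of-full-matrix-with-cut} to get $a_{P_E,u}a_{P_F,v}=a_{P_M,p}a_{P_N,q}$, use the separation of the variables $x_I$ and $x_{I^c}$ to get proportionality, fix the constant to $1$ via the top-degree part $\sum_{i\in I}|u_i|^2\prod_{j\ne i}x_j$, and finish with Lemma~\ref{prop:determinantal-poly-of-cut}; you are also slightly more careful than the paper in checking that the $a$-polynomials are nonzero and that the two proportionality constants are reciprocal.
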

    	\begin{proof}
    		By Theorem \ref{thm:determinantal-poly-of-full-matrix-with-cut},
	    	\[
	    		f_A=f_{P_E+\alpha uu^*}\cdot f_{P_F+\beta vv^*}-\alpha\beta a_{P_E,u}\cdot a_{P_F,v},
	    	\]
	    	and
	    	\[
	    		f_B =f_{P_M+\alpha pp^*}\cdot f_{P_N+\beta qq^*}-\alpha\beta a_{P_M,p}\cdot a_{P_N,q}.
	    	\]
	    	Since $A$ and $B$ have the same principal minors,
	    	\[
	    		f_A=f_B,\qquad f_{P_E+\alpha uu^*}=f_{P_M+\alpha pp^*},\qquad f_{P_F+\beta vv^*}=f_{P_N+\beta qq^*}. 
	    	\]
	    	Combining these gives
	    	\[
	    		a_{P_E,u}\cdot a_{P_F,v}=a_{P_M,p}\cdot a_{P_N,q}.
	    	\]
	    	Since $a_{P_E,u}$ and $a_{P_M,p}$ are polynomials in $x_i,\ i\in I$, while $a_{P_F,v}$ and $a_{P_N,q}$ are polynomials in $x_j,\ j\in I^c$, we have that
	    	\[
	    		a_{P_E,u}=\lambda a_{P_M,p}, \qquad a_{P_F,v}=\nu a_{P_N,q}, 
	    	\]
	    	for $\lambda,\nu\in \CCC$ nonzero. However, since $a_{P_E,u}=u^*\mathrm{adj}(D_I(x)+P_E)u$, we can expand this to have
	    	\begin{align*}
	    		a_{P_E,u} &= \sum_{i,j\in I} (-1)^{i+j}\bar{u}_i \det\Big((D_I(x)+P_E)[I\setminus\{j\}|I\setminus\{i\}]\Big)u_j\\
	    		&=\sum_{i\in I}\left(|u_i|^2\prod_{j\in I\setminus \{i\}}x_j  \right)+\text{lower order terms}.
	    	\end{align*}
	    	Hence, by comparing coefficients with $a_{P_M,p}$, we have $|u_i|^2=\lambda |p_i|^2$ and summing these together gives $\lambda=1$. Similarly, $\nu=1$.
	    	
	    	With this, we can apply Lemma \ref{prop:determinantal-poly-of-cut} to conclude that
	    	\[
	    		f_{P_E}=f_{P_E+\alpha uu^*}-\alpha a_{P_E,u}=f_{P_M+\alpha pp^*}-\alpha a_{P_M,p}=f_{P_M},
	    	\]
	    	and
	    	\[
	    		f_{P_F}=f_{P_F+\beta vv^*}-\beta a_{P_F,v}=f_{P_N+\beta qq^*}-\beta a_{P_N,q}=f_{P_N}
	    	\]
	    	as desired.
	    \end{proof}

    	The $(|I|+1)\times (|I|+1)$ orthogonal projection matrix onto $\EE_A$ in $\CCC e_0\oplus \CCC^I$ is given by
    	\[
    		P=\def\arraystretch{2}\begin{pmatrix}
    			 \displaystyle\frac{1}{1+\alpha} & \displaystyle\frac{\sqrt{\alpha}}{1+\alpha}u^*\\
    			 \displaystyle\frac{\sqrt{\alpha}}{1+\alpha}u & \displaystyle P_E+\frac{{\alpha}}{1+\alpha} uu^*
    		\end{pmatrix}.
    	\]
    	Similarly, the orthogonal projection matrix onto $\EE_B$ is given by
    	\[
    		Q=\def\arraystretch{2}\begin{pmatrix}
    			 \displaystyle\frac{1}{1+\alpha} & \displaystyle\frac{\sqrt{\alpha}}{1+\alpha}p^*\\
    			 \displaystyle\frac{\sqrt{\alpha}}{1+\alpha}p & \displaystyle P_M+\frac{{\alpha}}{1+\alpha} pp^*
    		\end{pmatrix}.
    	\]
    	We show that $P$ and $Q$ have the same principal minors in order to do induction.

	    \begin{lem}
	    	The orthogonal projection matrices $P$ and $Q$ have the same principal minors of all orders.		
	    \end{lem}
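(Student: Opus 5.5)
Compare the determinantal polynomials $f_P$ and $f_Q$ in the variables $x_0$ (for $e_0$) and $x_i$, $i\in I$. By Proposition \ref{prop:coefficients-of-determinantal-poly-are-minors}, it suffices to show $f_P=f_Q$. Since $f_P$ is affine-linear in $x_0$, write $f_P=x_0\,g_P+h_P$. Here $g_P$ is the determinantal polynomial of the lower-right block $P[I]=P_E+\frac{\alpha}{1+\alpha}uu^*$. The term $h_P$ is $f_P$ evaluated at $x_0=0$, i.e. $\det(P+\diag(0,D_I(x)))$.

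For the first term, apply Lemma \ref{prop:determinantal-poly-of-cut} with coefficient $\frac{\alpha}{1+\alpha}$:
\[
g_P=f_{P_E}+\tfrac{\alpha}{1+\alpha}a_{P_E,u},\qquad g_Q=f_{P_M}+\tfrac{\alpha}{1+\alpha}a_{P_M,p}.
\]
These agree by the preceding lemma, which gives $f_{P_E}=f_{P_M}$ and $a_{P_E,u}=a_{P_M,p}$.

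For $h_P$, use the Schur complement with respect to the $(0,0)$ entry $\frac{1}{1+\alpha}$, which is invertible. The complement is
\[
D_I(x)+P_E+\tfrac{\alpha}{1+\alpha}uu^*-(1+\alpha)\tfrac{\alpha}{(1+\alpha)^2}uu^*=D_I(x)+P_E.
\]
Hence $h_P=\frac{1}{1+\alpha}f_{P_E}$, and likewise $h_Q=\frac{1}{1+\alpha}f_{P_M}$. These agree again by the preceding lemma.

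The rank-one correction cancelling exactly in the Schur complement is the key observation. It reduces everything to the equalities $f_{P_E}=f_{P_M}$ and $a_{P_E,u}=a_{P_M,p}$ already established. The only thing to double-check is the bookkeeping: that $f_P$ is indeed degree one in $x_0$ with the stated coefficients. This is immediate from cofactor expansion along the $0$th row, or from Proposition \ref{prop:coefficients-of-determinantal-poly-are-minors}, which separates the subsets containing $0$ from those that do not. One could alternatively apply Lemma \ref{prop:determinantal-poly-of-cut} directly to $P$, viewed as $P_E\oplus 0$ plus $\frac{1}{1+\alpha}ww^*$ with $w=e_0+\sqrt{\alpha}u$. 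However, that would require computing $a_{P_E\oplus 0,w}$, and the Schur route above is cleaner. With $f_P=f_Q$, the lemma follows, and the induction hypothesis then gives $|\EE_A|=|\EE_B|$.
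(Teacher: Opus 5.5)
Your proposal is correct and follows essentially the same approach as the paper. The paper also uses the rank-one lemma together with $f_{P_E}=f_{P_M}$ and $a_{P_E,u}=a_{P_M,p}$ for the minors avoiding $0$, and the Schur complement on the $(0,0)$ entry, where the $uu^*$ term cancels, to get $\det P[S]=\frac{1}{1+\alpha}\det P_E[S']$ for $0\in S$. The only difference is that you package both cases at once as the $x_0$-coefficient and constant term of $f_P$, rather than working minor by minor.
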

	    \begin{proof}
	    	The principal minor $\det P[S]$ with $0\not\in S$ is a principal minor of the block $P_E+{{\alpha}}/{(1+\alpha)}uu^*$, but by the previous lemma,
	    	\[
	    		f_{P_E+{{\alpha}}/({1+\alpha})uu^*}=f_{P_E}+\frac{{\alpha}}{1+\alpha}a_{P_E,u}=f_{P_M}+\frac{{\alpha}}{1+\alpha}a_{P_M,p}=f_{P_M+{{\alpha}}/({1+\alpha})pp^*}.
	    	\]
	    	So $\det P[S]=\det Q[S]$ for all $S$ not containing $0$.
	    	
	    	Suppose now that $0\in S$ and let $S'=S-\{0\}$. Then, using the Schur determinant formula,
	    	\begin{align*}
	    		\det P[S] &= \det\begin{pmatrix}
	    			\displaystyle\frac{1}{1+\alpha} & \displaystyle \frac{\sqrt{\alpha}}{1+\alpha}u^*[S']\\
	    			\displaystyle \frac{\sqrt{\alpha}}{1+\alpha}u[S'] & \displaystyle \left(P_E+\frac{{\alpha}}{1+\alpha}uu^*\right)[S']
	    		\end{pmatrix}\\
	    		&=\frac{1}{1+\alpha}\det(\left(P_E+\frac{{\alpha}}{1+\alpha}uu^*\right)[S']-\frac{\alpha}{1+\alpha}u[S']u^*[S'])\\
	    		&=\frac{1}{1+\alpha}\det P_E[S'].
	    	\end{align*}
	    	By repeating the computation above for $\det Q[S]$, we find that 
	    	\[
	    		\det Q[S]=\frac{1}{1+\alpha} \det P_M[S']
	    	\]
	    	and since $f_{P_E}=f_{P_M}$, we conclude that $\det P[S]=\det Q[S]$.
	    \end{proof}
	    
	    Since $P$ and $Q$ have equal principal minors of all orders, and they have size $|I|+1\le n-1$ (as $2\le |I|\le n-2$), the induction hypothesis gives us $|\EE_A|=|\EE_B|$.
	    Similarly, $|\FF_A|=|\FF_B|$ and hence $|\im A|=|\im B|$. This concludes the proof of Theorem \ref{thm:same-principal-minors-have-equal-image}.
	    
\section{A Stratification of the Grassmannian}\label{sec:stratification}
	Reducibility of an orthogonal projection matrix $A$ amounts to finding subsets $I\subset [n]$ with 
	\[
		|I|,|I^c|\ge 1 \text{ and } \rank A[I|I^c]<1. 
	\]
	Similarly,
	a cut $I\subset [n]$ of a matrix $A$ is defined by the properties 
	\[
		|I|,|I^c|\ge 2 \text{ and } \rank A[I|I^c]< 2. 
	\]
	This
	points to a general pattern of subsets $I\subset [n]$ with 
	\[
		|I|,|I^c|\ge k \text{ and } \rank A[I|I^c]<k.
	\] 
	In this section, we show that subsets $I$ with the properties above define the notion of $k$-connectivity of a subspace $W=\im A$ and is related to the notion of $k$-connectivity of matroids.
	From this, we obtain that an orthogonal projection matrix $A$ is reducible if and only if $W=\im A$ is exactly 1-connected. Similarly, $A$ is irreducible and has a cut if and only if $W=\im A$ is exactly 2-connected. Continuing in this manner, we obtain a stratification of the Grassmannian given by connectivity levels.

	\subsection{Connectivity of Subspaces}
	Fix an $n\times n$  orthogonal projection matrix $A$ with $\rank A=r$, where $1\le r\le n-1$.
	Recall $A$ is said to be reducible if and only if there is a permutation matrix $P$ such that
	\[
		PAP^{-1} = \begin{pmatrix}
			A_1 & *\\
			0 & A_2
		\end{pmatrix}
	\]
	for some nontrivial square matrices $A_1$ and $A_2$ (i.e., they have size greater than zero). Since $A$ is an orthogonal projection matrix, the upper right corner must also be zero and so, writing $W=\im A$, we have $W=(W\cap \CCC^I)\oplus (W\cap \CCC^{I^c})$ for some nonempty proper subset $I\subset [n]$. We can define a function that measures how far we are from obtaining such a decomposition.
	
	\begin{Def}
		Let $W\subset \CCC^n$ be a subspace. The \textbf{connectivity function} of $W$ is the map
		\[
			\lambda_W(I)=\dim\left(\frac{W}{(W\cap \CCC^I)\oplus (W\cap \CCC^{I^c})}\right),
		\]
		where $I\subset [n]$.
	\end{Def}

	The subspaces $\CCC^I$ and $\CCC^{I^c}$ are orthogonal to each other so the sum in the quotient above is indeed a direct sum. This definition is related to the connectivity function of general matroids studied by Tutte \cite{Tutte1966} and others \cite{costalonga-k-connected, ge2026superminimally3connectedmatroids, oxley2022smallcocircuitsminimallyvertically}. To see this, let $\pi_I:\CCC^n\to \CCC^I$ denote the orthogonal projection so that $\ker \pi_I|_W=W\cap \CCC^{I^c}$. Hence,
	\[
		\dim(W\cap \CCC^{I^c})=\dim \ker\pi_I|_W=\dim W-\dim \pi_I(W).
	\]
	Replacing this expression for $I$ and $I^c$ into the definition of $\lambda_W$ gives us that
	\[
		\lambda_W(I)=\dim \pi_I(W)+\dim \pi_{I^c}(W)-\dim W=r_W(I)+r_W(I^c)-r
	\]
	where $r=\dim W$ and $r_W$ is the rank function of the matroid $M_W$ (see Section 2). Hence, the connectivity function depends purely on the matroid data of $W$ which leads us to the following definition from matroid theory.

    \begin{Def}
    	Let $W\subset \CCC^n$ be a linear subspace. 
    	\begin{enumerate}
		    \item A subset $I\subset [n]$ is called a \textbf{$k$-separation} of $W$ if $|I|,|I^c|\ge k$ and $\lambda_W(I)<k$.
		    \item We say that $W$ is \textbf{$k$-connected} if there are no $j$-separations for $j<k$, and \textbf{exactly $k$-connected} if it is $k$-connected but has a $k$-separation.
		    \item We say that $W$ is \textbf{$\infty$-connected} if it is $k$-connected for all $k\ge 1$.
		    \item The subspace $W$ is said to be \textbf{irreducible} if it is 2-connected and \textbf{reducible} otherwise.
    	\end{enumerate}
    \end{Def}
    
    To see that the connectivity function encodes the rank of $A[I|I^c]$, we begin with a lemma.
    
    \begin{lem}
    	Let $A$ be an $n\times n$ matrix and $I\subset [n]$. Write $\pi_I:\CCC^n\to \CCC^I$ for the orthogonal projection onto $\CCC^I$ and $i_I:\CCC^I\to \CCC^n$ for the canonical inclusion. Then
    	\begin{enumerate}
    		\item $A[I|n]z=(\pi_I\circ A)z$ for all $z\in \CCC^n$,
    		\item $A[n|I]z=(A\circ i_I)(z)$ for all $z\in \CCC^I$, and
    		\item if $A$ is an orthogonal projection matrix, then $A\circ i_I=(\pi_{I}\circ A)^*$.
    	\end{enumerate}
    	Here we have identified the linear maps with their corresponding matrices written with respect to the standard bases.
    \end{lem}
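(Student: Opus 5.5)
The three parts are direct computations with the standard bases. No earlier results are needed except the facts that $A$ is Hermitian ($A^*=A$) when it is an orthogonal projection matrix, and that $\pi_I$ and $i_I$ are adjoint to each other.

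For (1), I would compare the matrix of $\pi_I\circ A$ with $A[I|n]$ entrywise. The matrix of $\pi_I:\CCC^n\to\CCC^I$ with respect to the standard bases is the $|I|\times n$ matrix $i_I^*$ whose rows are $e_i^t$ for $i\in I$, listed in increasing order. Hence $\pi_I(Az)$ is the vector $((Az)_i)_{i\in I}$. Its $i$th entry is $\sum_{j=1}^n a_{ij}z_j$, which is exactly the $i$th entry of $A[I|n]z$.

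For (2), write $z\in\CCC^I$ as $(z_i)_{i\in I}$. Then $i_I(z)=\sum_{i\in I}z_ie_i$, so the $k$th entry of $A(i_I z)$ is $\sum_{i\in I}a_{ki}z_i$. This is the $k$th entry of $A[n|I]z$.

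For (3), I first check that $\pi_I$ and $i_I$ are adjoint. For $w\in\CCC^n$ and $z\in\CCC^I$,
\[
\langle i_Iz,w\rangle=\sum_{i\in I}\bar z_iw_i=\langle z,\pi_Iw\rangle,
\]
so $\pi_I=i_I^*$. Equivalently, at the level of matrices, $A[n|I]^*=\bar{A}^t[I|n]=A^*[I|n]$. Since an orthogonal projection matrix satisfies $A=A^*$, we get
\[
(\pi_I\circ A)^*=A^*\pi_I^*=A\,i_I .
\]
Using (1) and (2), this is the statement $A[I|n]^*=A[n|I]$.

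None of these steps is a real obstacle. The only point needing care is the identification of the map $\pi_I$ with the $|I|\times n$ coordinate-selection matrix: one must list the indices of $I$ in increasing order, consistently with the convention for submatrices fixed in the notation section.
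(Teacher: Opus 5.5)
Your proof is correct and essentially the paper's: parts (1) and (2) are the same entrywise comparisons. For (3), the paper argues at the matrix level with $(A[I|n])^*=A^*[n|I]=A[n|I]$ and then applies (1) and (2), which is the matrix-level identity you note next to your operator argument via $\pi_I=i_I^*$; both use only $A=A^*$.
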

    \begin{proof}
    	For $z\in \CCC^n$, the $i$th component of $A[I|n]z$ is
    	\[
    		(A[I|n]z)_i=\sum_{j=1}^n A_{ij}z_j=\big(\pi_I(Az)\big)_i,
    	\]
    	giving the equality (1).
    	Similarly as above, take $z\in \CCC^I$ so that
    	\[
    		(A[n|I]z)_i=\sum_{j\in I} A_{ij}z_j=\sum_{j\in I} A_{ij}(i_I(z))_j=A(i_I(z))_i,
    	\]
    	proving (2).
    	
    	For (3), since $A$ is an orthogonal projection we have $(A[I|J])^*=A^*[J|I]=A[J|I]$ and hence, using (1) and (2),
    	\[
    		(\pi_I\circ A)^*=(A[I|n])^*=A[n|I]=A\circ i_I.
    	\]
    \end{proof}

    \begin{prop}
    	Let $A$ be the orthogonal projection matrix onto $W$. Then
    	\[
    		\rank A[I|I^c]=\lambda_W(I).
    	\]	
    	\label{prop:principal-minor-rank-is-connectivity-func}
    \end{prop}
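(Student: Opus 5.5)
We want $\rank A[I|I^c]=\lambda_W(I)$, where $\lambda_W(I)=\dim\pi_I(W)+\dim\pi_{I^c}(W)-\dim W$. The plan is to view $A[I|I^c]$ as a composition of linear maps and compute the rank of that composition via the preceding lemma.

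By parts (1) and (2) of the lemma, the submatrix is
\[
A[I|I^c]=\pi_I\circ A\circ i_{I^c}:\CCC^{I^c}\to\CCC^I .
\]
Since $A$ is idempotent, this equals $(\pi_I\circ A)\circ(A\circ i_{I^c})$. By part (3), $A\circ i_{I^c}=(\pi_{I^c}\circ A)^*$. Its image is therefore $(\ker(\pi_{I^c}\circ A))^\perp$.

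We identify these spaces as follows:
\begin{enumerate}
\item The image of $A\circ i_{I^c}$ lies in $W$. Computing inside $W$, it is the orthogonal complement in $W$ of $\ker(\pi_{I^c}|_W)=W\cap\CCC^{I}$.
\item So the image of $A\circ i_{I^c}$ is $W\ominus(W\cap\CCC^I)$, of dimension $\dim W-\dim(W\cap\CCC^I)=\dim\pi_{I^c}(W)$.
\end{enumerate}

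Next, apply $\pi_I\circ A$, which restricts to $\pi_I$ on $W$, to the subspace $U:=W\ominus(W\cap\CCC^I)$. Then
\[
\rank A[I|I^c]=\dim\pi_I(U)=\dim U-\dim(U\cap\ker\pi_I)=\dim U-\dim\bigl(U\cap \CCC^{I^c}\bigr).
\]
Now $U\cap\CCC^{I^c}=U\cap(W\cap\CCC^{I^c})$. Since $W\cap\CCC^{I^c}\perp\CCC^I\supseteq W\cap\CCC^I$, the whole of $W\cap\CCC^{I^c}$ already lies in $U$. Hence
\[
\rank A[I|I^c]=\dim W-\dim(W\cap\CCC^I)-\dim(W\cap\CCC^{I^c})=\lambda_W(I).
\]

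The main point needing care is the identification of the image of $(\pi_{I^c}\circ A)^*$ as exactly the complement of $W\cap\CCC^I$ within $W$, rather than within $\CCC^n$. This works because $\ker(\pi_{I^c}\circ A)=W^\perp\oplus(W\cap\CCC^I)$, and its orthogonal complement is $W\cap(W\cap\CCC^I)^\perp$.
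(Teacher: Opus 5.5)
Your proof is correct and is essentially the paper's argument: factor $A[I|I^c]$ as $\pi_I|_W\circ(\pi_{I^c}|_W)^*$, identify the image of the adjoint as the orthogonal complement of $W\cap\CCC^I$ inside $W$, and use $W\cap\CCC^{I^c}\perp W\cap\CCC^I$ to see that $\pi_I$ has kernel exactly $W\cap\CCC^{I^c}$ on that image. The only cosmetic difference is that you compute $\ker(\pi_{I^c}\circ A)=W^\perp\oplus(W\cap\CCC^I)$ on all of $\CCC^n$, whereas the paper restricts to $W$ from the start via $M=\pi_I|_W$ and $N=\pi_{I^c}|_W$.
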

    \begin{proof}
    	\vspace*{-2em} Write $\CCC^n=\CCC^I\oplus \CCC^{I^c}$ and let $\pi_I:\CCC^n\to \CCC^I$ be the orthogonal projection and $i_I:\CCC^I\to \CCC^n$ the canonical inclusion. Define
        $M=\pi_I|_W$ and $N=\pi_{I^c}|_W$, so by using the previous lemma, we have
    	\begin{align*}
    		A[I|I^c]z &=\pi_I(A\circ i_{I^c})(z)=(\pi_I|_W)(A\circ i_{I^c})(z)
			=(\pi_I|_W)(\pi_{I^c}|_W)^*(z)=MN^*z
    	\end{align*}
    	for any $z\in \CCC^{I^c}$.
    	We can then compute the rank of $MN^*$:
    	\begin{align*}
    		\rank (MN^*) &= \dim M(\im N^*)\\
    		&=\dim \im M|_{\im N^*}\\
            &=\dim \im N^*-\dim \ker M|_{\im N^*}\\
    		&=\dim (\ker N)^\perp-\dim (\ker M\cap \im N^*)\\
    		&=r-\dim \ker N-\dim (\ker M\cap (\ker N)^\perp).
    	\end{align*}
    	Now, $\ker M$ and $\ker N$ are orthogonal to each other, so $\ker M\subset (\ker N)^\perp$ and hence
    	\begin{align*}
    		\rank (MN^*) &=r-\dim \ker N-\dim \ker M=r-\dim (W\cap \CCC^I)-\dim (W\cap \CCC^{I^c}).
    	\end{align*}
    	The right hand side is $\lambda_W(I)$.
    \end{proof}

    \begin{cor}
        A subset $I$ of $[n]$ is a $k$-separation of $W$ if and only if 
        \[
            k\le |I|\le n-k\qquad  \text{and} \qquad \rank A[I|I^c]<k
        \]
        for $A$ the orthogonal projection matrix associated to $W$.
        In particular, $I$ is a cut of $A$ if and only if $I$ is a $2$-separation of $W$.
    \end{cor}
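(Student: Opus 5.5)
The corollary follows directly by combining the definition of a $k$-separation with Proposition \ref{prop:principal-minor-rank-is-connectivity-func}. By definition, $I$ is a $k$-separation of $W$ exactly when $|I|\ge k$, $|I^c|\ge k$ and $\lambda_W(I)<k$. Since $|I^c|=n-|I|$, the two cardinality conditions together say $k\le |I|\le n-k$. Proposition \ref{prop:principal-minor-rank-is-connectivity-func} gives $\lambda_W(I)=\rank A[I|I^c]$, where $A$ is the orthogonal projection onto $W$, so $\lambda_W(I)<k$ is equivalent to $\rank A[I|I^c]<k$. This proves the first equivalence.

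For the statement about cuts, take $k=2$. A $2$-separation is then a set $I$ with $2\le |I|\le n-2$ and $\rank A[I|I^c]\le 1$. A cut requires the same size condition together with both $\rank A[I|I^c]\le 1$ and $\rank A[I^c|I]\le 1$. Because $A$ is Hermitian, $A[I^c|I]=A[I|I^c]^*$, and a matrix and its conjugate transpose have the same rank. Hence the second rank condition is automatic, and the two notions coincide.

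There is no real obstacle here. The only point needing a moment's care is remembering to use the Hermitian symmetry of $A$ to discard the condition on $A[I^c|I]$ in the definition of a cut.
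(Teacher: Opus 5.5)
Your proof is correct and is the intended argument: the paper states this corollary without proof as an immediate consequence of Proposition~\ref{prop:principal-minor-rank-is-connectivity-func} and the definition of a $k$-separation. Your use of Hermitian symmetry, $A[I^c|I]=A[I|I^c]^*$, to drop the second rank condition in the definition of a cut is also how the paper handles it elsewhere, for instance when showing that $I$ is a cut of $B$.
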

    
    \begin{cor}
        The orthogonal projection matrix $A$ onto $W$ is irreducible if and only if $W$ is irreducible. In particular, $A$ is irreducible and has a cut if and only if $W$ is exactly 2-connected.
    \end{cor}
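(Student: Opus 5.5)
The plan is to read both claims of the corollary directly off Proposition \ref{prop:principal-minor-rank-is-connectivity-func} and the preceding corollary, by translating each matrix condition into a connectivity condition on $W$.

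\emph{Irreducibility.} Since $A$ is Hermitian, $\rank A[I^c|I]=\rank A[I|I^c]^*=\rank A[I|I^c]$. So $A$ is reducible iff there is a nonempty proper $I\subset[n]$ with $A[I|I^c]=0$. Indeed, if $PAP^{-1}$ is block upper triangular, let $I$ be the index set of the lower-right block pulled back by $\sigma$. The lower-left block is then $A[I|I^c]=0$, after using Proposition \ref{prop:submatrix-of-conjugation} to track indices. Conversely, such an $I$ yields the block form after permuting $I^c$ to the front. By Proposition \ref{prop:principal-minor-rank-is-connectivity-func}, $A[I|I^c]=0$ is equivalent to $\lambda_W(I)=0<1$. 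Together with $|I|,|I^c|\ge 1$, this says exactly that $I$ is a $1$-separation of $W$. Hence $A$ is reducible iff $W$ has a $1$-separation, i.e. iff $W$ fails to be $2$-connected. This is the definition of $W$ being reducible, because there are no $0$-separations ($\lambda_W\ge 0$).

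\emph{Cuts.} Assume $A$ is irreducible, so by the above $W$ is $2$-connected. By the previous corollary, $A$ has a cut iff $W$ has a $2$-separation. A $2$-connected subspace with a $2$-separation is by definition exactly $2$-connected. Conversely, if $W$ is exactly $2$-connected, then it is $2$-connected, so $A$ is irreducible by the first part, and it has a $2$-separation, so $A$ has a cut.

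\emph{Main obstacle.} The only delicate point is the small matching of conventions in the reducibility step. The paper's definition of reducibility uses block upper triangular form under a permutation, and we need to identify the vanishing block with $A[I|I^c]$ for the correct $I$. Which of $I$ or $I^c$ appears does not matter, since the Hermitian property makes both off-diagonal blocks vanish simultaneously. We should also note the degenerate ranks $r\in\{0,n\}$: there $A$ is $0$ or the identity and is reducible for $n\ge 2$, while $\lambda_W\equiv 0$ makes $W$ reducible as well. So the equivalence still holds, or we simply restrict to $1\le r\le n-1$ as the section does.
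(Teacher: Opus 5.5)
Your proof is correct and follows essentially the paper's own (implicit) argument. The paper likewise treats this as immediate from $\operatorname{rank} A[I|I^c]=\lambda_W(I)$ (Proposition \ref{prop:principal-minor-rank-is-connectivity-func}): reducibility of the Hermitian matrix $A$ means $A[I|I^c]=0$ for some nonempty proper $I$, i.e.\ $W$ has a $1$-separation, and by the preceding corollary cuts are exactly $2$-separations.
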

    
    \begin{cor}
    	For $A$ an irreducible orthogonal projection matrix onto a subspace $W$, $A$ has no cuts if and only if $W$ is 3-connected.
    	\label{cor:no-cuts-is-3-connected}
    \end{cor}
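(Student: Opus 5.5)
The plan is to unwind both notions until they say the same thing about subsets $I$, using the corollary just before, which identifies $2$-separations of $W$ with cuts of $A$, together with the preceding corollary identifying irreducibility of $A$ with irreducibility of $W$.

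First I would fix the bookkeeping. By the definition, $W$ is $3$-connected exactly when it has no $1$-separation and no $2$-separation. Since $A$ is assumed irreducible, the preceding corollary says that $W$ is irreducible, i.e.\ $2$-connected. So $W$ has no $1$-separation, and $3$-connectedness of $W$ reduces to the absence of $2$-separations.

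Next I would apply the corollary characterizing $k$-separations with $k=2$. A subset $I\subset[n]$ is a $2$-separation of $W$ if and only if $2\le |I|\le n-2$ and $\rank A[I|I^c]<2$, that is, $\rank A[I|I^c]\le 1$. Since $A$ is Hermitian, $A[I^c|I]=A[I|I^c]^*$ has the same rank, so this is exactly the condition that both off-diagonal blocks have rank at most $1$. Hence the $2$-separations of $W$ are precisely the cuts of $A$. This is the ``in particular'' clause already stated there.

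Combining the two steps: $A$ has no cuts if and only if $W$ has no $2$-separations, which, given that $W$ is $2$-connected, holds if and only if $W$ is $3$-connected. The one point needing care is the reverse direction. If $W$ is $3$-connected, then by definition it has no $2$-separations, so $A$ has no cuts; irreducibility is not even needed here. The forward direction is where the irreducibility hypothesis is used, to rule out $1$-separations. There is no real obstacle; the only subtlety is keeping the rank inequality $<k$ versus the cut condition $\le 1$ straight, and noting that the cut definition requires both blocks to have small rank, which Hermitian symmetry gives automatically.
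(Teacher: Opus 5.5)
Your proof is correct and follows the paper's own route. The paper states this corollary without proof, as an immediate consequence of the two preceding corollaries: irreducibility of $A$ is equivalent to $2$-connectedness of $W$, and the $2$-separations of $W$ are exactly the cuts of $A$, since $A[I|I^c]$ has rank $\lambda_W(I)$ and Hermitian symmetry handles the other block. Your observation that the reverse implication does not need irreducibility is also accurate.
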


    	If $n\ge 4$, then the irreducibility assumption can be dropped. That is, for $n\ge 4$, an (irreducible or not) orthogonal projection matrix onto a subspace $W$ has no cuts if and only if $W$ is 3-connected.  This follows from the following.
    	
    \begin{prop}
    	Let $W\subset \CCC^n$ be a subspace and $k$ a positive integer.
    	\begin{enumerate}
    		\item If $2k>n$, then $W$ has no $k$-separations.
    		\item If $2k\le n$ and $W$ has a $j$-separation for some $j\le k$, then it has a $k$-separation.
    	\end{enumerate}
    	\label{prop:j-sep-implies-k-sep}
    \end{prop}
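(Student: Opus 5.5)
Part (1) is immediate from the definition. A $k$-separation $I$ needs $|I|\ge k$ and $|I^c|\ge k$. Then $n=|I|+|I^c|\ge 2k$, which contradicts $2k>n$.

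For part (2), the plan is to start from a $j$-separation $I$ with $j\le k$ and enlarge $I$ or $I^c$ one element at a time. Each move should increase $\lambda_W$ by at most one. The key step is the following observation. If $J\subset[n]$ and $x\notin J$, then
\[
\lambda_W(J\cup\{x\})\le \lambda_W(J)+1 .
\]
To see this, use the formula $\lambda_W(J)=r_W(J)+r_W(J^c)-r$ derived above. Here $r_W(J\cup\{x\})\le r_W(J)+1$, because $\pi_{J\cup\{x\}}(W)$ has dimension at most one more than $\pi_J(W)$. Also $r_W(J^c\setminus\{x\})\le r_W(J^c)$, by monotonicity of $r_W$: a coordinate projection of $\pi_{J^c}(W)$ cannot increase dimension. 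Adding the two inequalities gives the claim. Since $\lambda_W(J)=\lambda_W(J^c)$, the same bound holds when an element is moved from $J$ into $J^c$.

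Now start with $I$ satisfying $|I|,|I^c|\ge j$ and $\lambda_W(I)\le j-1$. Do the following while $|I|<k$: pick $x\in I^c$ and replace $I$ by $I\cup\{x\}$. Then do the symmetric thing for $I^c$, moving elements from $I$ to $I^c$ until $|I^c|\ge k$. We must make sure the two phases do not undo each other, and that the total number of moves gives $\lambda<k$. Let $m_1=\max(0,k-|I|)$ and $m_2=\max(0,k-|I^c|)$. Write $a=|I|$ and $b=|I^c|$, with $a+b=n\ge 2k$ and $a,b\ge j$.

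Consider the case $a<k$. Since $n\ge 2k$, we get $b\ge 2k-a>k$. So we can add $k-a$ elements of $I^c$ to $I$ and still keep at least $b-(k-a)\ge k$ elements outside. The case $b<k$ is symmetric. Both cannot hold, because $a+b\ge 2k$. So only one side ever needs enlarging, and the number of moves is $m=\max(0,k-a,k-b)\le k-j$, since $a,b\ge j$. Each move raises $\lambda_W$ by at most one, so the final set $I'$ satisfies
\[
\lambda_W(I')\le (j-1)+(k-j)=k-1<k,
\]
with $|I'|,|I'^c|\ge k$. Hence $I'$ is a $k$-separation.

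The only real content is the one-step bound $\lambda_W(J\cup\{x\})\le\lambda_W(J)+1$, and that is where I would be careful. It relies on $r_W$ being monotone with unit increments under adding an element. Both properties follow directly from $r_W(J)=\dim\pi_J(W)$, since $\pi_J$ factors through $\pi_{J\cup\{x\}}$ and the kernel of that factorization has dimension at most one. Everything else is bookkeeping.
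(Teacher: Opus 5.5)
Your proof is correct and follows essentially the same route as the paper: enlarge the smaller side of the $j$-separation by $k-|I|$ elements, checking via $n\ge 2k$ that the other side keeps at least $k$ elements, and pay at most one unit of $\lambda_W$ per added element. The only cosmetic difference is how the increment is bounded. You add elements one at a time and use $\lambda_W(J)=r_W(J)+r_W(J^c)-r$. The paper adds all $k-|I|$ elements at once and bounds $\dim(W\cap\CCC^{J^c})$ from below using the kernel of the coordinate projection onto the added coordinates.
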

    \begin{proof}
    	(1) If $|I|>n/2$, then $|I^c|< n/2$ so $I$ cannot be a $k$-separation for $2k>n$.
    	
    	(2) Suppose $W$ has a $j$-separation $I$ for $j\le k$. Then $|I|,|I^c|\ge j$ and $\lambda_W(I)< j$. If $|I|,|I^c|\ge k$, then $I$ is a $k$-separation and we are done so suppose without loss of generality that $|I|<k$. Since $n\ge 2k$, we have $|I^c|>n-k\ge k\ge k-|I|>0$ and hence we can take elements $i_1,\dots, i_{k-|I|}\in I^c$. Define $J\coloneqq I\cup \{i_1,\dots, i_{k-|I|}\}$. Then $|J|=|I|+k-|I|=k$ and using the fact that $n\ge 2k$,
    	\begin{align*}
    		|J^c| &= n-|J|=n-|I|-k+|I|\ge k.
    	\end{align*}
    	Furthermore, since $I^c=J^c\cup \{i_1,\dots, i_{k-|I|}\}$, we have that the orthogonal projection $\pi_{J\setminus I}:\CCC^{I^c}\to \CCC^{\{i_1,\dots, i_{k-|I|}\}}$ satisfies
    	\[
    		\ker (\pi_{J\setminus I}|_{W\cap \CCC^{I^c}})=W\cap \CCC^{J^c}
    	\]
    	and hence $\dim(W\cap \CCC^{J^c})\ge \dim(W\cap \CCC^{I^c})-(k-|I|)$. Combining this with the inclusion
    	$W\cap \CCC^I\subset W\cap \CCC^{J}$, we have 
    	\begin{align*}
    		\lambda_W(J) &=\dim W-\dim(W\cap \CCC^J)-\dim(W\cap \CCC^{J^c})\\
    		&\le \dim W-\dim(W\cap \CCC^I)-\dim(W\cap \CCC^{I^c})+(k-|I|)\\
    		&=\lambda_W(I)+k-|I|\\
    		&<j+k-|I|\le k,
    	\end{align*}
    	where the last inequality follows since $|I|\ge j$. We conclude that $J$ is a $k$-separation.
    \end{proof}
    
\vspace{-0.5cm}

	\subsection{The Connectivity Stratification}	
	The above results are a reformulation of the notions of irreducibility and cuts of an orthogonal projection matrix $A$ in terms of the connectivity of its image $W$. These can be summarized as follows:
	\vspace*{0.25cm}
    \begin{empheq}[box=\widefbox]{align*}
            &{A\text{ is reducible}} & \xleftrightarrow{\hspace*{1.5cm}} & &{W\text{ is reducible}}\\
            &{A\text{ is irreducible and has a cut}} & \xleftrightarrow{\hspace*{1.5cm}} & &{W \text{ is exactly $2$-connected}}\\
            &{A\text{ is irreducible and has no cuts}} & \xleftrightarrow{\hspace*{1.5cm}} & &{W \text{ is $3$-connected}}
    \end{empheq}
    \vspace*{0.15cm}
    
    \noindent These are the three different cases we considered in the proof of Theorem \ref{thm:same-principal-minors-have-equal-image}. However, the more general decomposition of all $r$-dimensional subspaces by their connectedness level gives a stratification of the Grassmannian $\Gr(r,n)$ of $r$-dimensional subspaces in $\CCC^n$ in the sense of \cite[\href{https://stacks.math.columbia.edu/tag/09XY}{Tag 09XY}]{StacksProject}, which we recall now.
    
    \begin{Def}
    	A \textbf{stratification} of a topological space $X$ is a partition $\{X_i\}_{i\in I}$ of $X$, indexed by a partially ordered set $I$, such that
    	\begin{enumerate}
    		\item each $X_i$ locally closed, and
    		\item $\bar{X}_i\subset \bigcup_{j\le i}X_j$ for all $i$.
    	\end{enumerate}
    	Each $X_i$ is called a \textbf{stratum} of the stratification.
    \end{Def}

    \begin{lem}
    	The set
    	\[
	    	S_k=\{W\in \Gr(r,n):W\text{ has a }k\text{-separation}\}
	    \]
    	is a finite union of (Zariski) closed subvarieties of $\Gr(r,n)$. In particular, $S_k$ is closed.
    	\label{lem:Sk-is-closed}
    \end{lem}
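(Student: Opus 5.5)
Write $S_k=\bigcup_I Z_I$, where the union runs over the finitely many subsets $I\subset[n]$ with $|I|,|I^c|\ge k$, and
\[
Z_I=\{W\in\Gr(r,n):\lambda_W(I)<k\}.
\]
Finiteness of the union is clear, so it suffices to show that each $Z_I$ is Zariski closed. A finite union of closed subvarieties is closed, which then gives both assertions.

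To see that $Z_I$ is closed, use the matroid expression $\lambda_W(I)=r_W(I)+r_W(I^c)-r$ with $r_W(I)=\dim\pi_I(W)$. Then $\lambda_W(I)<k$ holds if and only if $r_W(I)+r_W(I^c)\le r+k-1$. This is a finite union, over pairs $(a,b)$ with $a+b\le r+k-1$, $0\le a,b\le r$, of the sets $\{r_W(I)\le a\}\cap\{r_W(I^c)\le b\}$. So it is enough to show that $\{W: r_W(J)\le a\}$ is closed for each $J\subset[n]$ and each integer $a$.

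Represent $W$ locally by an $n\times r$ matrix $X$ of full column rank whose columns span $W$. Then $\pi_J(W)$ is the column space of the submatrix $X[J|r]$, so $r_W(J)=\rank X[J|r]$. The condition $\rank X[J|r]\le a$ is the vanishing of all $(a+1)\times(a+1)$ minors of $X[J|r]$. These minors are polynomial in the entries of $X$. Changing the basis $X\mapsto Xg$ with $g\in GL_r$ does not change the rank, so the vanishing locus descends to a well-defined closed subset on each standard affine chart of $\Gr(r,n)$, and these glue to a closed subvariety.

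Alternatively, one can avoid charts by expressing the condition in Plücker coordinates. Here $r_W(J)$ is the maximal size of $J\cap B$ over $B$ with $p_B(W)\ne0$. The condition $r_W(J)\le a$ is then equivalent to $p_B(W)=0$ for all $B$ with $|B\cap J|>a$, which is visibly a linear condition cut out by homogeneous Plücker coordinates, hence closed.

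The only point needing care is the translation from rank to Plücker coordinates, namely that $\dim\pi_J(W)=\max\{|B\cap J|:p_B(W)\neq 0\}$. This is the standard fact that the bases of $M_W$ are exactly the $B$ with $p_B\ne0$, and that the rank of $J$ is the maximal size of an independent subset of $J$, each of which extends to a basis. Using the minors of $X[J|r]$ instead bypasses this point, since that argument is directly algebraic.
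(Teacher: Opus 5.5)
Your proof is correct and follows the paper's route. Both arguments write $S_k$ as a finite union, over $I$ with $|I|,|I^c|\ge k$ and pairs $(a,b)$, of intersections of two incidence conditions, and since $\dim(W\cap\CCC^{J^c})=r-\dim\pi_J(W)$, your sets $\{r_W(J)\le a\}$ are exactly the paper's Schubert varieties $\{\dim(W\cap\CCC^{J^c})\ge r-a\}$; the only difference is that you prove closedness directly via vanishing minors (or Plücker coordinates), where the paper cites the Schubert-variety fact.
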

    \begin{proof}
    	We have that $W\in S_k$ if and only if there exists an $I\subset [n]$ with $|I|,|I^c|\ge k$ such that $\lambda_W(I)< k$. This last condition is equivalent to 
    	\[
    		\dim(W\cap \CCC^I)+\dim (W\cap \CCC^{I^c})> r-k.
    	\]
    	Hence, $W\in S_k$ if and only if $W$ is in the set
    	\[
    		\bigcup_{a+b> r-k} \{W\in \Gr(r,n):\dim(W\cap \CCC^I)\ge a,\ \dim(W\cap \CCC^{I^c})\ge b\}
    	\]
    	for some $I\subset [n]$ with $|I|,|I^c|\ge k$. The union above is over all non-negative integers $a,b$ with $a+b> r-k$. Since each set of the form $\{W\in \Gr(r,n):\dim(W\cap \CCC^K)\ge \ell\}$ is a closed subvariety of $\Gr(r,n)$, as they are Schubert varieties, the set $S_k$ is a union of closed subvarieties, and this union is finite since $a$ and $b$ are bounded above by $r$ (otherwise the sets are just empty).
    \end{proof}
    
    This lemma, combined with Proposition \ref{prop:j-sep-implies-k-sep}, say that the set of exactly $k$-connected subspaces,
    \[
    	C_k\coloneqq S_k\setminus \bigcup_{j=1}^{k-1} S_j=S_k\setminus S_{k-1},\qquad S_0\coloneqq \emptyset,
    \]
    are locally closed.
    
    The remaining subspaces to consider are the $\infty$-connected ones. These are given by the following set
    \[
    	C_\infty=\{W\in \Gr(r,n):W\text{ is $\infty$-connected}\}=\Gr(r,n)\setminus \bigcup_{k=1}^{\lfloor n/2\rfloor} S_k.
    \]
    We note that $C_\infty$ is open and hence dense whenever it is nonempty. We place an order on $\ZZZ_{>0}\cup \{\infty\}$ by declaring $k<\infty$ for all $k\in \ZZZ_{>0}$.
    
    \begin{prop}
    	The family $\{C_k\}_{1\le k\le \lfloor n/2\rfloor}\cup \{C_\infty\}$ is a finite stratification of the Grassmannian $\Gr(r,n)$, where the indexing set $\{1,\dots, \lfloor n/2\rfloor,\infty\}$ has the natural order.
    	\label{prop:stratification-of-grass}
    \end{prop}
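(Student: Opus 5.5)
We must check three things for the family $\{C_k\}_{1\le k\le \lfloor n/2\rfloor}\cup\{C_\infty\}$: that it partitions $\Gr(r,n)$, that each member is locally closed, and that closures satisfy $\bar{C}_i\subset\bigcup_{j\le i}C_j$. The main input is the chain of inclusions
\[
	\emptyset=S_0\subset S_1\subset S_2\subset\dots\subset S_{\lfloor n/2\rfloor},
\]
which follows from Proposition~\ref{prop:j-sep-implies-k-sep}(2). Indeed, for $k\le \lfloor n/2\rfloor$ we have $2k\le n$, so a $j$-separation with $j\le k$ yields a $k$-separation. The same proposition also gives $S_k=\emptyset$ for $k>\lfloor n/2\rfloor$. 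Hence $S_{k-1}=\bigcup_{j<k}S_j$, and $C_k=S_k\setminus S_{k-1}$ agrees with the definition of exactly $k$-connected subspaces.

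\emph{Partition.} For $W\in\Gr(r,n)$, let $k$ be the least integer with $W\in S_k$, if such a $k$ exists. Then $k\le\lfloor n/2\rfloor$, and $W\in C_k$ and in no other $C_j$, since $W\in S_j$ for every $j\ge k$ by the chain. If no such $k$ exists, then $W\in C_\infty=\Gr(r,n)\setminus S_{\lfloor n/2\rfloor}$. The sets $C_k$ are pairwise disjoint because the $S_k$ are nested, and they are disjoint from $C_\infty$ by its definition.

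\emph{Local closedness.} By Lemma~\ref{lem:Sk-is-closed}, each $S_k$ is closed. Therefore $C_k=S_k\cap(\Gr(r,n)\setminus S_{k-1})$ is the intersection of a closed set with an open set. The set $C_\infty$ is the complement of the closed set $S_{\lfloor n/2\rfloor}$, so it is open. This holds in both the Zariski and the classical topology, since Zariski closed sets are classically closed.

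\emph{Closure condition.} Since $C_k\subset S_k$ and $S_k$ is closed, we get $\bar{C}_k\subset S_k=\bigcup_{j\le k}(S_j\setminus S_{j-1})=\bigcup_{j\le k}C_j$, where the last equality telescopes using the chain. For $C_\infty$ the condition is trivial: $\infty$ is the maximum of the index set, so $\bigcup_{j\le\infty}C_j=\Gr(r,n)$. The only step needing care is the telescoping identity $S_k=\bigcup_{j\le k}C_j$, and this rests entirely on the inclusions $S_{j-1}\subset S_j$ from Proposition~\ref{prop:j-sep-implies-k-sep}(2). Some $C_k$ may be empty (for example, when $r$ is small). The Stacks definition of a stratification tolerates empty strata, or they can simply be discarded without affecting either condition.
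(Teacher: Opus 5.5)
Your proof is correct and follows the paper's argument. Local closedness comes from Lemma~\ref{lem:Sk-is-closed}, and the closure condition comes from $\bar{C}_k\subset S_k=\bigcup_{j\le k}C_j$ via the nested chain given by Proposition~\ref{prop:j-sep-implies-k-sep}. You also spell out two points the paper leaves implicit: that the strata cover $\Gr(r,n)$, and that the closure condition holds for $C_\infty$.
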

    \begin{proof}
    	The sets $C_k$ are disjoint by definition and locally closed as shown above. Since $C_k\subset S_k$ and $S_k$ is closed, it follows that
    	\[
    		\bar{C}_k\subset S_k=\bigcup_{j=1}^k S_j=\bigcup_{j=1}^k (S_j\setminus S_{j-1})=\bigcup_{j=1}^k C_j,
    	\]
    	as desired.
    \end{proof}
    
    We will refer to the stratification given by the sets $C_k$ as the \textbf{connectivity stratification}. It is still open whether this stratification is a ``good stratification'' in the sense of \cite[\href{https://stacks.math.columbia.edu/tag/09XY}{Tag 09XY}]{StacksProject}.

    \begin{rem}
		Our notion of $k$-connectivity is based on the notion of $k$-connectivity of matroids \cite{Oxley2011}. In \cite{GelfandGoreskyMacPhersonSerganova}, Gelfand-Goresky-MacPherson-Serganova (GGMS) define for each subspace $W$ a matroid $M_W$ (see Section \ref{sec:background} for the definition). They then define a stratification of the Grassmannian by saying that $W,W'$ are in the same stratum if and only if $M_W=M_{W'}$. Our definition of exact $k$-connectedness is equivalent to saying that the matroid $M_W$ is exactly $k$-connected. In particular, if two subspaces define the same matroid, they must have the same connectivity level. This means that the stratification above is coarser than the one of GGMS. Furthermore, the open stratum of $\{C_k\}_k\cup \{C_\infty\}$ is the one that contains the subspaces representing the uniform matroid of rank $r$ on $[n]$, as these form a dense open set in $\Gr(r,n)$ with the Zariski topology \cite{GelfandGoreskyMacPhersonSerganova}
		\label{rem:uniform-matroids-are-dense}
	\end{rem}
 	
 	\begin{Def}
 		A subspace $W\subset \CCC^n$ is uniform if $\dim \pi_I(W)=|I|$ for all $I\subset [n]$ with $|I|\le \dim W$. 
 	\end{Def}
 	
 	Equivalently, if we let $r=\dim W$ and $Q$ be an $r\times n$ matrix whose rows form a basis for $W$, then $W$ is uniform if and only if $\det Q[r|S]\neq 0$ for all $S\subset [n]$ with $|S|=r$. As mentioned in Remark \ref{rem:uniform-matroids-are-dense}, these form an open and dense subset of $\Gr(r,n)$.

	We now show that this stratification is not trivial, in the sense that many strata are nonempty.
		
	\begin{Ex}
		Take $W=(W\cap \CCC^I)\oplus (W\cap \CCC^{I^c})$ for some $I$ nonempty proper subset of $[n]$. Then $\lambda_W(I)=0$ so $I$ is a 1-separation. It follows that $W\in C_1$.
	\end{Ex}

	\begin{prop}
		For any $k=1,\dots, \min\{r,n-r\}$, the stratum $C_k$ is nonempty.
		\label{prop:nonempty-strata}
	\end{prop}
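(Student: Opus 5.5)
The plan is to construct explicit exactly $k$-connected subspaces by forcing a $k$-separation along a coordinate set of size $k$, and to use genericity elsewhere to rule out smaller separations. Fix $k$ with $1\le k\le \min\{r,n-r\}$, and let $I\subset[n]$ have $|I|=k$, so that $|I^c|=n-k\ge r\ge k$. Choose a generic subspace $F\subset \CCC^{I^c}$ of dimension $r-k+1$. This is possible since $r-k+1\le n-k$, which holds because $r<n$. Choose also a generic subspace $G\subset\CCC^n$ of dimension $k-1$, and set $W=F\oplus G$, so that $\dim W=r$. Genericity then gives:
\begin{itemize}
\item $W\cap\CCC^I=0$, because $r+k\le n$;
\item $W\cap \CCC^{I^c}=F$ exactly, because $G$ is generic and $\pi_I|_G$ is injective.
\end{itemize}
Hence $\lambda_W(I)=r-0-(r-k+1)=k-1<k$. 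Since $|I|,|I^c|\ge k$, the set $I$ is a $k$-separation, and $W\in S_k$. For $k=1$ this already gives $W\in C_1$, matching the Example above.

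It remains to show $W\notin S_{k-1}$. By Proposition \ref{prop:j-sep-implies-k-sep}(2), and since $2(k-1)\le n$, it suffices to show that $W$ has no $(k-1)$-separation. In other words, for every $J$ with $|J|,|J^c|\ge k-1$ we need $\lambda_W(J)\ge k-1$. I would use the matroid expression $\lambda_W(J)=r_W(J)+r_W(J^c)-r$ from the discussion after the definition of the connectivity function, and bound the ranks below by genericity. Since $F$ is generic of dimension $r-k+1$ in $\CCC^{I^c}$, it is uniform there, giving $\dim\pi_{J\cap I^c}(F)=\min\{|J\cap I^c|,\,r-k+1\}$. Since $G$ is generic and independent of $F$, its projection adds independent directions. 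I would aim to prove
\[
r_W(J)\ \ge\ \min\bigl\{|J|,\ r\bigr\},
\]
possibly with a correction when $J$ is contained in or close to $I$. The justification is that $W$ is generic among subspaces containing a generic $(r-k+1)$-subspace of $\CCC^{I^c}$, so the only rank drops are the forced ones coming from $I$.

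The main obstacle is the case analysis in the resulting inequality for $\lambda_W(J)$. When $|J|\le r$ and $|J^c|\le r$, the bound $r_W(J)+r_W(J^c)-r\ge n-r\ge k$ is ample. The delicate case is when one side, say $J$, is small or mostly inside $I$: there $r_W(J)$ can only be bounded by $\min\{|J\cap I|,k-1\}+\min\{|J\cap I^c|,\dots\}$. I would handle it by splitting on $t=|J\cap I|$, treating $t\le k-1$ and $t=k$ separately, and checking in each subcase that $\lambda_W(J)\ge\min\{|J|,|J^c|,k-1\}\ge k-1$. If this direct computation becomes messy, the fallback is an irreducibility argument. The locus $Z$ of such $W=F\oplus G$ is irreducible, being the image of a product of Grassmannians, and $S_{k-1}$ is closed by Lemma \ref{lem:Sk-is-closed}. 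So it suffices to exhibit a single point of $Z\setminus S_{k-1}$, for instance with $F$ and $G$ spanned by Vandermonde-type vectors, whose minors can be checked to be nonzero.
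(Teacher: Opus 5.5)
Your construction works, and it is the matroid dual of the paper's. You make $I$ a circuit, with $r_W(I)=k-1$. The paper instead takes $W$ spanned by a generic vector supported on $I$ together with a uniform $(r-1)$-dimensional $H$, so that $r_W(I^c)=r-1$ and $I$ is a cocircuit. Your first half is correct: $W\cap\CCC^I=0$ and $W\cap\CCC^{I^c}=F$ give $\lambda_W(I)=k-1$, so $I$ is a $k$-separation.

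\textbf{The gap.} The second half, that $W$ has no $(k-1)$-separation, is only planned, never proved, and this is the real content of the proposition. The inequality you aim for, $r_W(J)\ge\min\{|J|,r\}$, fails exactly in the case that matters. Since $r_W(I)=r-\dim(W\cap\CCC^{I^c})=k-1$, every $J\supseteq I$ with $|J|\le r$ has $r_W(J)=|J|-1$. For the same reason your ``ample'' case is wrong as stated: if $I\subseteq J$ and $|J|,|J^c|\le r$, then $\lambda_W(J)=n-r-1$, not $\ge n-r$. The fallback does not close the gap either. It reduces the problem to exhibiting one point of $Z\setminus S_{k-1}$, which is the same problem, and the Vandermonde-type point is suggested but not checked.

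\textbf{The missing step} is the exact rank function. Since $\pi_J(W)=\pi_{J\cap I^c}(F)+\pi_J(G)$, two facts combine:
\begin{enumerate}
\item Uniformity of $F$ in $\CCC^{I^c}$ gives $\dim\pi_{J\cap I^c}(F)=\min\{|J\cap I^c|,\,r-k+1\}$.
\item For generic $G$, the space $\pi_J(G)$ is a $\min\{k-1,|J|\}$-dimensional subspace of $\CCC^J$ transverse to $\pi_{J\cap I^c}(F)$. This is one nonempty Zariski-open condition per $J$, and there are finitely many $J$.
\end{enumerate}
Hence
\[
r_W(J)=\min\{|J|,\ |J\cap I^c|+k-1,\ r\}.
\]
This equals $\min\{|J|,r\}$ unless $I\subseteq J$, in which case it equals $\min\{|J|-1,r\}$.

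Now use $\min\{x,r\}+\min\{y,r\}-r=\min\{x+y-r,\,x,\,y,\,r\}$:
\begin{enumerate}
\item If neither $J$ nor $J^c$ contains $I$, then $\lambda_W(J)=\min\{r,\,n-r,\,|J|,\,|J^c|\}$.
\item If $I\subseteq J$, then $\lambda_W(J)=\min\{n-r-1,\,|J|-1,\,|J^c|,\,r\}$.
\end{enumerate}
Assume $|J|,|J^c|\ge k-1$, recall $k\le\min\{r,n-r\}$, and note $|J|\ge k$ in the second case. Then both expressions are $\ge k-1$, so there is no $(k-1)$-separation. Proposition \ref{prop:j-sep-implies-k-sep}(2) then rules out all smaller separations. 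Note that this case split, $I\not\subseteq J,\,J^c$ versus $I\subseteq J$ (or $I\subseteq J^c$), is exactly the paper's split: ``$J$ and $J^c$ both meet $I$'' versus ``$J\subseteq I^c$''. With this filled in, your argument is complete.
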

	\begin{proof}
		Fix $I\subset [n]$ with $|I|=k$. Let $H$ be an $(r-1)$-dimensional uniform subspace of $\CCC^n$ and let $P$ be the $(r-1)\times n$ matrix whose rows form a basis of $H$. Pick $a\in \CCC^I$ such that the matrix
		\[
			Q=\begin{pmatrix}
				a^t\\
				P
			\end{pmatrix}
		\]
		satisfies $\det Q[r|J]\neq 0$ for any $J\subset [n]$ with $J\cap I\neq \emptyset$ and $|J|=r$. This can be done since for any $J$ with $J\cap I\neq \emptyset$, we can consider the $a_i$ with $i\in I$ as variables, and expanding along the first row gives
		\[
			\det Q[r|J]=\sum_{i\in I\cap J}(\pm a_i)\det P[r-1|J\setminus \{i\}].
		\]
		Each $\det P[r-1|J\setminus \{i\}]$ is nonzero so that the polynomial above is a nonzero linear form. Since there are finitely many subsets $J$ that intersect $I$, the vector $a$ we are looking for is a vector in the complement of the zero set of all such polynomials above. In particular, this is a nonempty set and hence such an $a$ exists.
		
		Let $W$ be the row span of $Q$. We claim that $W$ is exactly $k$-connected. By definition, we have that
		\[
			\dim \pi_J(W)=\begin{cases}
				\min\{r,|J|\}, & \text{ if }J\cap I\neq \emptyset\\
				\min\{r-1,|J|\}, & \text{ if }J\subset I^c.
			\end{cases}
		\]
		Indeed, if $J\subset I^c$, then the above follows by uniformity of $H$. If $J\cap I\neq \emptyset$ and $|J|<r$, then extend it to a set $J'$ of size $r$ so that $r=\dim \pi_{J'}(W)$ meaning that the columns indexed by $J'$ are independent. In particular, those indexed by $J$ are independent and so $\dim \pi_{J}(W)=|J|$. If $J\cap I\neq \emptyset$ and $|J|\ge r$, then take $J'\subset J$ of size $r$ with $J'\cap I\neq \emptyset$ so that $r=\dim \pi_{J'}(W)\le \dim \pi_{J}(W)\le r$. This shows that $\dim \pi_J(W)$ is as described above.
		
		Note that $I$ is a $k$-separation: we have that $|I|=k\le n-r$ so $|I^c|\ge r\ge k$, and
		\[
			\lambda_W(I)=k+r-1-r=k-1<k.
		\] 
		We now show that $W$ has no $\ell$-separation for any $1\le \ell<k$. For this, let $J\subset [n]$ with $|J|,|J^c|\ge \ell$ for some $1\le \ell\le k-1$. 
		
		Suppose first that $J\cap I$ and $J^c\cap I$ are nonempty. Then
		\begin{align*}
			\lambda_W(J)=\min\{r,|J|\}+\min\{r,|J^c|\}-r=\min\{r,n-r,|J|,|J^c|\}.
		\end{align*}
		Since $|J|,|J^c|\ge \ell$, and $\min\{r,n-r\}\ge k>\ell$, we have that $\lambda_W(J)\ge \ell$.
		
		If $J\subset I^c$, then $I\subset J^c$ so that $|J^c|\ge k$.
		We have
		\begin{align*}
			\lambda_W(J) &=\dim \pi_J(W)+\dim \pi_{J^c}(W)-r\\
			&=\min\{r-1,|J|\}+\min\{r,|J^c|\}-r\\
			&=\min\{r-1,n-r,|J|,|J^c|-1\}
		\end{align*}
		By assumption, $|J|\ge \ell$. As mentioned above, $|J^c|\ge k> \ell$ so $|J^c|-1\ge \ell$ and since $\ell <k\le \min\{r,n-r\}$, we have $\min\{r-1,n-r\}\ge \ell$. Hence, $J$ cannot be an $\ell$-separation. The case where $J^c\subset I^c$ is similar.
	\end{proof}

 	\begin{cor}
    	For $k-1\le \min\{r,n-r\}$, the set of all $k$-connected subspaces $W\in \Gr(r,n)$ is open and dense.
    	\label{cor:k-conn-set-is-dense}
    \end{cor}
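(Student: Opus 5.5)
The set of $k$-connected subspaces is
\[
	\Gr(r,n)\setminus \bigcup_{j=1}^{k-1} S_j=\Gr(r,n)\setminus S_{k-1},
\]
with $S_0=\emptyset$. We use Proposition \ref{prop:j-sep-implies-k-sep} to collapse the union when $2(k-1)\le n$; when $2(k-1)>n$, each $S_j$ with $2j>n$ is empty, so the union is just $S_m$ for $m=\min\{k-1,\lfloor n/2\rfloor\}$. By Lemma \ref{lem:Sk-is-closed}, $S_{k-1}$ is Zariski closed, so its complement is Zariski open. The Grassmannian $\Gr(r,n)$ is an irreducible variety, so a nonempty Zariski open subset is automatically dense. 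The whole task therefore reduces to showing that the set of $k$-connected subspaces is nonempty under the hypothesis $k-1\le \min\{r,n-r\}$.

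For nonemptiness we take a uniform subspace $W$; such subspaces exist and are dense by Remark \ref{rem:uniform-matroids-are-dense}. For a uniform $W$ of dimension $r$ we have $\dim\pi_J(W)=\min\{r,|J|\}$ for every $J$. Hence
\[
	\lambda_W(J)=\min\{r,|J|\}+\min\{r,n-|J|\}-r=\min\{r,\,n-r,\,|J|,\,|J^c|\},
\]
which is the same computation as in the proof of Proposition \ref{prop:nonempty-strata}. Now suppose $J$ is a $j$-separation with $j\le k-1$. Then $|J|,|J^c|\ge j$ and $\lambda_W(J)<j$. This forces $\min\{r,n-r\}<j\le k-1$, contradicting the hypothesis $k-1\le\min\{r,n-r\}$. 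Therefore $W$ is $k$-connected, and the open set is nonempty, hence dense.

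An alternative route to nonemptiness uses Proposition \ref{prop:nonempty-strata}: the stratum $C_{k-1}$ is nonempty, so any point of $C_{k-1}$ is $(k-1)$-connected. This only gives $(k-1)$-connectivity, so the uniform-subspace argument is the cleaner choice.

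The only delicate point is the boundary case $k-1=\min\{r,n-r\}$. Here $\lambda_W(J)$ may equal $\min\{r,n-r\}=k-1$, and we must check that the strict inequality $\lambda_W(J)<j\le k-1$ still fails. It does, since $\lambda_W(J)\ge\min\{|J|,|J^c|,r,n-r\}\ge\min\{j,k-1\}=j$. We also need to confirm that the trivial cases $r=0$ and $r=n$, where $\min\{r,n-r\}=0$ and hence $k\le 1$, make the statement vacuous; in those cases every subspace is $1$-connected. Finally, irreducibility of $\Gr(r,n)$ is standard, so the density step is routine.
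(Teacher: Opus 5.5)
Your proof is correct and follows essentially the same route as the paper: the $k$-connected locus is the complement of the closed set $S_{k-1}$, it is dense once nonempty because $\Gr(r,n)$ is irreducible, and a uniform subspace shows it is nonempty. The only difference is minor: you rule out separations of a uniform $W$ directly from the formula $\lambda_W(J)=\min\{r,n-r,|J|,|J^c|\}$, whereas the paper first uses Proposition \ref{prop:j-sep-implies-k-sep} to upgrade a $(k-1)$-separation to an $m$-separation and evaluates $\lambda_W$ only there.
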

    \begin{proof}
    	If $k=1$, all subspaces in $\Gr(r,n)$ are $1$-connected and hence the assertion trivially follows. Assume now that $k>1$.
    	
    	By Proposition \ref{prop:j-sep-implies-k-sep}, the set of $k$-connected subspaces is the open set $\Gr(r,n)\setminus S_{k-1}$. If we show this is nonempty we are done. We claim that any uniform subspace $W\in \Gr(r,n)$ is in this set. Write $m=\min\{r,n-r\}$ and suppose $W$ is a uniform subspace which has a $(k-1)$-separation. Since $k-1\le m$ and $2m\le n$, then Proposition \ref{prop:j-sep-implies-k-sep} implies that $W$ has an $m$-separation $I$.
    	Then $|I|,|I^c|\ge m$ and this implies the following inequalities:
    	\begin{align*}
    		m\le |I|\le n-m, \qquad m\le |I^c|\le n-m.
    	\end{align*}
    	Then
    	\[
    		\lambda_W(I)=\begin{cases}
    			r, & \text{ if }m=r\\
    			n-r, & \text{ if }m=n-r.
    		\end{cases}
    	\]
    	In all cases, $\lambda_W(I)\ge m$ so that $I$ cannot be an $m$-separation. Hence, $W$ is in the set $\Gr(r,n)\setminus S_{k-1}$, proving the result.
    \end{proof}

 \subsection{Open Stratum of the Connectivity Stratification}
 	The goal of this section is to find the highest nonempty stratum of the connectivity stratification.
 	Such a stratum $C_k$ must be open, since
 	\[
 		C_k=\Gr(r,n)\setminus \bigcup_{i <k}C_i=\Gr(r,n)\setminus \bigcup_{i< k} S_{i}
 	\]
 	and each $S_{i}$ is closed by Lemma \ref{lem:Sk-is-closed}. It must also be unique since the strata are disjoint and $\Gr(r,n)$ is irreducible.
 	
 	We first recall some facts from matroid theory which we rephrase in the context of vector subspaces. We have included self-contained proof in Appendix \ref{subsec:infty-conn-subs} for completeness.
 	
 	\begin{lem}[\cite{Oxley2011}, Corollary 8.6.3]
 		Let $W\subset \CCC^n$ be a subspace. Then $W$ is $\infty$-connected if and only if it is uniform and one of the following dimensionality conditions holds for $\ell\in \ZZZ_{\ge 0}$:
 		\begin{enumerate}
 			\item $n=2\ell$, $\dim W=\ell$;
 			\item $n=2\ell+1$, $\dim W=\ell$;
 			\item $n=2\ell+1$, $\dim W=\ell+1$.
 		\end{enumerate}
 		\label{lem:infty-conn-matroids-are-uniform}
 	\end{lem}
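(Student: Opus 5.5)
The plan is to argue directly from the formula $\lambda_W(I)=r_W(I)+r_W(I^c)-r$, where $r=\dim W$, using only the trivial bounds $r_W(J)\le \min\{r,|J|\}$.

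\emph{Sufficiency.} Suppose $W$ is uniform with $r$ and $n$ as in (1)--(3), and take $I$ with $|I|,|I^c|\ge k$. Uniformity gives $r_W(J)=\min\{r,|J|\}$ for every $J$; this holds also when $|J|\ge r$, by taking an $r$-subset of $J$. Hence
\[
\lambda_W(I)=\min\{r,|I|\}+\min\{r,|I^c|\}-r=\min\{|I|,|I^c|,r,n-r\},
\]
where the last equality comes from checking the cases according to which minima are attained. Under (1)--(3) we have $\min\{r,n-r\}=\lfloor n/2\rfloor$. Also $\min\{|I|,|I^c|\}\le\lfloor n/2\rfloor$, so $\lambda_W(I)=\min\{|I|,|I^c|\}\ge k$. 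Thus no $k$-separation exists, and $W$ is $\infty$-connected.

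\emph{Necessity, dimension condition.} Let $W$ be $\infty$-connected and set $m=\min\{r,n-r\}$. If $m<\lfloor n/2\rfloor$, take any $I$ with $|I|=m+1$; then $|I^c|=n-m-1\ge m+1$, since $2m+2\le n$. By the general bound, $\lambda_W(I)\le \min\{r,|I|\}+\min\{r,|I^c|\}-r\le m$. This holds in both cases $m=r$ (bound $r+r-r$) and $m=n-r$ (bound $|I|+|I^c|-r=n-r$). So $I$ is an $(m+1)$-separation, a contradiction. Hence $m=\lfloor n/2\rfloor$, which is exactly (1)--(3). The degenerate cases $r\in\{0,n\}$ need a quick check; there $\lambda\equiv 0$, so any $1$-separation exists once $n\ge2$.

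\emph{Necessity, uniformity.} This is the main step. Suppose $W$ is not uniform, so some $J$ with $|J|\le r$ has $r_W(J)<|J|$. I would try to produce a separation of the right size using a set of size $\lfloor n/2\rfloor$ or $\lceil n/2\rceil$. Enlarge $J$ to a set $I$ with $|I|=r$, adding elements of $J^c$. Rank is submodular and adding an element raises rank by at most one, so $r_W(I)\le r_W(J)+(r-|J|)<r$. Then, with $|I^c|=n-r$,
\[
\lambda_W(I)\le (r-1)+\min\{r,n-r\}-r=m-1.
\]
If $r\le n-r$, both $|I|$ and $|I^c|$ are at least $m$, so $I$ is an $m$-separation, a contradiction. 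If $r>n-r$, then $|I|=r\ge m$ and $|I^c|=n-r=m$, and again we get an $m$-separation. The subtle point is that ``adding an element raises $r_W$ by at most one'' should be justified via $\pi_{J\cup\{i\}}(W)$ projecting onto $\pi_J(W)$ with kernel of dimension at most one. That is elementary, so I expect the only real care is in the case bookkeeping above.
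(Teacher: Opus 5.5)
Your proof is correct, and your sufficiency direction is the paper's (the formula $\lambda_W(I)=\min\{r,n-r,|I|,|I^c|\}$). You even skip the appeal to Proposition~\ref{prop:j-sep-implies-k-sep} by noting that $\min\{|I|,|I^c|\}\le\lfloor n/2\rfloor=\min\{r,n-r\}$.

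For necessity you take a genuinely different route from the appendix (Lemma~\ref{lem:dim-of-max-conn-subspace} and the case analysis before Theorem~\ref{thm:infty-conn-matroids-are-uniform}).

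\emph{The paper} argues from below. Having no separations gives $\lambda_W(I)\ge\min\{|I|,|I^c|\}$. For $|I|\le n/2$ this is sandwiched against $\lambda_W(I)\le\dim\pi_I(W)\le|I|$, which forces $\dim\pi_I(W)=|I|$ and $W\cap\CCC^I=0$. That single computation yields both dimension bounds and uniformity on small sets. A separate check via $\lambda_W(I^c)$ then handles $|I|=\ell+1$ when $n=2\ell+1$ and $\dim W=\ell+1$.

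\emph{You} argue by contrapositive from above, exhibiting explicit separations. The universal bound $\lambda_W\le m$ gives an $(m+1)$-separation when $m<\lfloor n/2\rfloor$. A rank-deficient $r$-set gives an $m$-separation.

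What each route buys:
\begin{itemize}
\item Your uniformity step does not use the dimension condition. It proves the stronger fact that every non-uniform $W$ fails to be $(m+1)$-connected.
\item Both of your necessity arguments are the ones the paper itself uses in the proof of Theorem~\ref{thm:open-stratum}. There the paper simply asserts that a non-uniform $W$ has an $r$-set $I$ with $\dim\pi_I(W)\le r-1$; your enlargement argument supplies the missing justification.
\item The paper's route gives, in exchange, the extra structural statement $\pi_I(W)=\CCC^I$ and $W\cap\CCC^I=0$ for $|I|\le|I^c|$ recorded in Theorem~\ref{thm:infty-conn-matroids-are-uniform}.
\end{itemize}

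Two small remarks. Your separate check for $r\in\{0,n\}$ is redundant: then $m=0<\lfloor n/2\rfloor$ for $n\ge 2$, so the general argument already applies. Also, submodularity is never used; only the fact that adding one element raises $r_W$ by at most one, which you justify correctly.
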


 	Let $m=\min\{r,n-r\}$. 
 	Because $m\le \lfloor n/2\rfloor$, we either have that $m=\lfloor n/2\rfloor$ or $m<\lfloor n/2 \rfloor$. We claim that these two conditions determine the nonempty open stratum of the connectivity stratification.
 	
 	\begin{thm}
 		Let $m=\min\{r,n-r\}$ as above.
 	\begin{enumerate}
 		\item If $m=\lfloor n/2\rfloor$, then
 		\[
 			\Gr(r,n)=C_\infty\cup \bigcup_{i=1}^m C_i
 		\]
 		with $C_1,\dots, C_m,C_\infty$ nonempty.
 		In this case $C_\infty$ is the nonempty open stratum.
 		
 		\item If $m<\lfloor n/2\rfloor$, then
 		\[
 			\Gr(r,n)=\bigcup_{i=1}^{m+1} C_i
 		\]
 		with $C_1,\dots, C_{m+1}$ nonempty. In this case, $C_{m+1}$ is the nonempty open stratum.
 	\end{enumerate}
 	In either case, the nonempty open stratum consists only of the uniform subspaces.
 	\label{thm:open-stratum}
 	\end{thm}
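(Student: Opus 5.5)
We first observe that every $W\in\Gr(r,n)$ has no $k$-separation for $k>m+1$ in the relevant sense, and we locate exactly which $S_k$ are nonempty. Let $I$ be arbitrary with $|I|,|I^c|\ge k$. Since $\dim(W\cap\CCC^I)\ge r-|I^c|$ and $\dim(W\cap \CCC^{I^c})\ge r-|I|$, and both are nonnegative, we get $\lambda_W(I)\le \min\{r,\ |I|,\ |I^c|,\ n-r\}$. Indeed, $\lambda_W(I)=r_W(I)+r_W(I^c)-r$ with $r_W(I)\le\min\{r,|I|\}$. If $m<\lfloor n/2\rfloor$, then $2(m+1)\le n$, so we may choose $I$ with $|I|=m+1$ and $|I^c|\ge m+1$. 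This gives $\lambda_W(I)\le m<m+1$, so every $W$ has an $(m+1)$-separation, i.e. $S_{m+1}=\Gr(r,n)$. Hence $C_j=\emptyset$ for $j>m+1$, $C_\infty=\emptyset$, and $\Gr(r,n)=\bigcup_{i\le m+1}C_i$. If instead $m=\lfloor n/2\rfloor$, then $S_k=\emptyset$ for $k>m$ by Proposition \ref{prop:j-sep-implies-k-sep}(1), so $\Gr(r,n)=C_\infty\cup\bigcup_{i\le m}C_i$.

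Next comes nonemptiness. Proposition \ref{prop:nonempty-strata} gives $C_1,\dots,C_m\ne\emptyset$. In case (2), Corollary \ref{cor:k-conn-set-is-dense} with $k=m+1$ (allowed since $k-1=m$) shows that the $(m+1)$-connected subspaces form a nonempty open set. Since all of them have an $(m+1)$-separation, this set is exactly $C_{m+1}$, so $C_{m+1}$ is nonempty and open. In case (1), the condition $m=\lfloor n/2\rfloor$ is precisely one of the three dimension conditions of Lemma \ref{lem:infty-conn-matroids-are-uniform}. Uniform subspaces exist (they are dense by Remark \ref{rem:uniform-matroids-are-dense}), so $C_\infty$ is nonempty, and it is open as noted. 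Uniqueness of the open stratum was already argued from irreducibility of $\Gr(r,n)$.

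Finally, we identify the open stratum with the uniform subspaces. In case (1) this is exactly Lemma \ref{lem:infty-conn-matroids-are-uniform}. In case (2), the proof of Corollary \ref{cor:k-conn-set-is-dense} shows that uniform subspaces lie in $\Gr(r,n)\setminus S_m=C_{m+1}$. For the converse we argue by contrapositive. Suppose $W$ is not uniform, and by duality assume $m=r$ (the case $m=n-r$ is handled via $W^\perp$, using $\lambda_{W^\perp}=\lambda_W$, or symmetrically). Then there is $J$ with $|J|=r$ and $\dim\pi_J(W)<r$, so $\dim(W\cap\CCC^{J^c})\ge1$. Since $|J^c|=n-r\ge r+2$, we can take $I=J$ and obtain $\lambda_W(J)\le r_W(J)+\min\{r,|J^c|\}-r\le r-1<m$, with $|J|,|J^c|\ge m$. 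So $J$ is an $m$-separation and $W\in S_m$, i.e. $W\notin C_{m+1}$.

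The main obstacle is this converse in case (2), specifically handling the case $m=n-r<r$ cleanly. Here I would take $J$ with $|J|=n-r$ and $r_W(J^c)<r$, or equivalently pass to $W^\perp$ and check the connectivity-function identity $\lambda_{W^\perp}(I)=\lambda_W(I)$. That identity follows from $\rank A[I|I^c]=\rank (I-A)[I|I^c]$ together with Proposition \ref{prop:principal-minor-rank-is-connectivity-func}.
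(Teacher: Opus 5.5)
Your proof is correct and follows the paper's argument essentially step for step: the bound $\lambda_W(I)\le m$ forces an $(m+1)$-separation when $m<\lfloor n/2\rfloor$, Lemma~\ref{lem:infty-conn-matroids-are-uniform} handles $C_\infty$, Corollary~\ref{cor:k-conn-set-is-dense} is applied with $k=m+1$, and a rank-deficient $r$-subset gives the converse. It is also slightly more complete than the paper's proof, since you explicitly cite Proposition~\ref{prop:nonempty-strata} for the nonemptiness of $C_1,\dots,C_m$.

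Two points need tidying. First, the ``main obstacle'' you flag is illusory. For $|J|=r$ with $r_W(J)\le r-1$, your own bound $r_W(J^c)\le\min\{r,n-r\}=m$ already gives $\lambda_W(J)\le m-1$, whichever of $r,n-r$ is smaller. This is exactly the paper's argument, so no case split or duality is needed; the duality route would in addition require that $W^\perp$ is non-uniform whenever $W$ is. Second, your opening sentence should say that no $W$ is $k$-connected for $k>m+1$. As written it is false: in case (2) every $W$ does have $k$-separations for $m+1\le k\le\lfloor n/2\rfloor$.
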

 	\begin{proof}
 		Suppose first that $m=\lfloor n/2\rfloor$. If $n$ is even, then $m=n/2$ and, by looking at the different values $m$ can take, we conclude that $r=n/2$. By Lemma \ref{lem:infty-conn-matroids-are-uniform}, $C_\infty$ is nonempty as it contains the uniform subspaces. If $n$ is odd, say $n=2\ell+1$ with $\ell\in \ZZZ$ positive, then $r\in \{\ell,\ell+1\}$. In either case, Lemma \ref{lem:infty-conn-matroids-are-uniform} again says that $C_\infty$ is nonempty.  Note that $C_k=\emptyset$ for $k\ge m+1$ since $m+1= \lfloor n/2\rfloor +1> n/2$ so by cardinality reasons, we cannot have $k$-separations for such $k$.
 		
 		Suppose now that $m<\lfloor n/2\rfloor$. In this case $C_\infty$ is empty by Lemma \ref{lem:infty-conn-matroids-are-uniform}. By Corollary \ref{cor:k-conn-set-is-dense} with $k=m+1$, we have that $\Gr(r,n)\setminus S_m$ is open and dense. If we show that $C_k=\emptyset$ for all $k>m+1$, then $\Gr(r,n)\setminus S_m=C_{m+1}$ and hence obtaining the result. Under our assumption on $m$, every $W\in \Gr(r,n)$ admits an $(m+1)$-separation. Indeed, we have
 		\[
 			\lambda_W(I)=\dim \pi_I(W)+\dim \pi_{I^c}(W)-r\le r+r-r=r,
 		\]
 		and
 		\[
 			\lambda_W(I)=\dim \pi_I(W)+\dim \pi_{I^c}(W)-r\le |I|+|I^c|-r=n-r.
 		\]
 		So $\lambda_W(I)\le m$ for all $I\subset [n]$. However, since $m<\lfloor n/2\rfloor$, we can find $I$ such that $|I|,|I^c|\ge m+1$ (indeed, any $I$ with $|I|=m+1$ will do) and then $\lambda_W(I)\le m<m+1$ so $I$ is an $(m+1)$-separation. In particular, no $W\in \Gr(r,n)$ can be $k$-connected for $k>m+1$ and so $C_k=\emptyset$.
 		
 		For the last statement, we know that $C_\infty$ consists only of uniform subspaces by Lemma \ref{lem:infty-conn-matroids-are-uniform}. If $m<\lfloor n/2\rfloor$ and $W$ is not uniform, then there exists some $I\subset [n]$ with $|I|=r$ but $\dim \pi_I(W)\le r-1$. In this case, $|I^c|=n-r$ so that $\dim \pi_{I^c}(W)\le m$ and hence
 		\[
 			\lambda_W(I)=\dim \pi_I(W)+\dim \pi_{I^c}(W)-r\le r-1+m-r=m-1.
 		\]
 		It follows that $I$ is an $m$-separation and so $W\not\in C_{m+1}$. Conversely, the proof of Corollary \ref{cor:k-conn-set-is-dense} shows that uniform subspaces are $(m+1)$-connected. Furthermore, we showed above that any subspace $W\in \Gr(r,n)$ with $m<\lfloor n/2\rfloor$ has an $(m+1)$-separation. We conclude that uniform subspaces are exactly $(m+1)$-connected.
 	\end{proof}
 	
 	We end this paper by showing that our stratification is strictly	 different from the matroid stratification of Gelfand-Goresky-MacPherson-Serganova \cite{GelfandGoreskyMacPhersonSerganova}. Theorem \ref{thm:open-stratum} tells us that the open stratum of both the matroid stratification and the connectivity stratification coincide, but this need not be the case for lower strata. Indeed, the stratum $C_k$ in $\Gr(r,n)$ with $k<m+1$ can have subspaces representing different matroids. 
 	
 	\begin{Ex}
 		Consider $\Gr(r,n)$ with $r\ge 2$. The stratum $C_1$ contains subspaces of the form $W=E\oplus F$ with $E\subset \CCC^I$, $F\subset \CCC^{I^c}$ for some $I\subset [n]$. These could define different matroids: if $I,I^c\neq \emptyset$ and $W=\CCC^I\oplus \CCC^{J}$ with $J\subsetneq I^c$ and $|I|+|J|=r$, then $W$ defines the matroid 
 		\[
 			U_{|I|,I}\oplus U_{|J|,J}\oplus U_{0,[n]-(I\cup J)}, 
 		\]
 		where $U_{r,S}$ is the uniform matroid of rank $r$ on the set $S$. Note that this matroid has loops in the third summand.
 		On the other hand, take $E\subset \CCC^I$ and $F\subset \CCC^{I^c}$ uniform subspaces with $\dim E,\dim F>0$ and $\dim E+\dim F=r$. Then $W'=E\oplus F$ is exactly 1-connected but 
 		\[
 			M_{W'}=U_{\dim E,I}\oplus U_{\dim F,I^c}
 		\]
 		which is not isomorphic to $M_W$ as this matroid has no loops.
 		An example for higher connectivity is in $\Gr(2,5)$. Consider
 		\[
 			A=\begin{pmatrix}
 				1 & 1 & 0 & 1 & 1\\
 				0 & 0 & 1 & 1 & 2
 			\end{pmatrix}, \qquad B=\begin{pmatrix}
 				1 & 1 & 0 & 0 & 1\\
 				0 & 0 & 1 & 1 & 1
 			\end{pmatrix}
 		\]
 		and let $W,W'$ be the row span of $A$ and $B$ respectively. Both $W$ and $W'$ are exactly 2-connected but their matroids $M_{W}$ and $M_{W'}$ are not isomorphic since $M_W$ has only one two-element circuit $I=\{1,2\}$ whereas $M_{W'}$ has two: $J_1=\{1,2\}$ and $J_2=\{3,4\}$.
 	\end{Ex}

\appendix

\section{The Structure of Subspaces in Each Stratum}\label{sec:structure-of-k-conn-subspaces}
	In this appendix we attach self-contained proofs to well-known results in matroid theory from the point of view of linear algebra. We first prove a generalization of Theorem \ref{thm:cut-decomposition} using the cosine-sine decomposition \cite{PaigeWei1994, Stewart1982, doi:10.1137/15M1009573, VanLoan1985} in order to characterize exactly $k$-connected subspaces, and then we give a self-contained proof that $\infty$-connected subspaces must be uniform and of specific dimensions.
	
	\subsection{The Structure of Exactly $k$-Connected Subspaces}
		The structure theorem of Theorem \ref{thm:cut-decomposition} for an exactly 2-connected subspace $W$ relies only on the fact that $W$ has a $2$-separation $I$ with $\lambda_W(I)=1$. In general, the same procedure gives a decomposition for a subspace $W$ that has a $k$-separation with $\lambda_W(I)=k-1$ of the form
			\[
				W=(W\cap \CCC^I)\oplus (W\cap \CCC^{I^c})\oplus \bigoplus_{i=1}^{k-1} \langle \sqrt{\alpha_i}u_i+\sqrt{1-\alpha_i}v_i\rangle
			\]
			where $u_1,\dots, u_{k-1}\in E^\perp\cap \CCC^I$, $v_1,\dots, v_{k-1}\in F^\perp \cap \CCC^{I^c}$ and $\alpha_1,\dots, \alpha_{k-1}\in (0,1)$. This sort of decomposition along a partition $I$ with $\lambda_W(I)=k-1$ is well-known to matroid theorists. We will proceed to derive it again using the cosine-sine decomposition (CSD) and obtain a characterization of exactly $k$-connected using the angles coming from the CSD.
			
			Let $W$ be an $r$-dimensional subspace and let $Q$ be the matrix whose columns form an orthonormal basis for $W$ and let $I,I^c$ be a partition of $[n]$. By grouping together the rows in $I$ and $I^c$ respectively, write
			\[
				Q=\begin{pmatrix}
					Q_1\\
					Q_2
				\end{pmatrix}
			\]
			where $Q_1=Q[I|r]$ and $Q_2=Q[I^c|r]$. Since $Q^*Q=I$, we have that $Q^*_1Q_1+Q_2^*Q_2=I$. Then $Q_1^*Q_1$ and $Q_2^*Q_2$ are commuting Hermitian matrices which can thus be simultaneously diagonalized. Furthermore, the eingevalues of $Q_2^*Q_2$ are $1-\alpha_i$ for each eigenvalue $\alpha_i$ of $Q_1^*Q_1$ and $\alpha_i\in [0,1]$. By writing $\alpha_i=\cos^2(\theta_i)$, the above implies that we can find orthonormal vectors $z_1,\dots, z_r\in \CCC^r$ and $\theta_1,\dots, \theta_r\in [0,\pi/2]$ such that
			\[
				Q_1^*Q_1z_i=\cos^2(\theta_i)z_i,\qquad Q_2^*Q_2z_i=\sin^2(\theta_i)z_i.
			\]
			In particular, $\|Q_1z_i\|=\cos(\theta_i)$ and $\|Q_2z_i\|=\sin(\theta_i)$. Let $u_i\coloneqq Q_1z_i/\|Q_1z_i\|$ and $v_i\coloneqq Q_2z_i/\|Q_2z_i\|$ whenever $\cos(\theta_i)\neq 0$ or $\sin(\theta_i)\neq 0$ respectively. Note that the $u_i$ are mutually orthogonal and same for the $v_i$. Finally, we see that $Qz_1,\dots, Qz_r$ forms an orthonormal basis for $W$ and
			\[
				W\cap \CCC^I=\langle Qz_i:\theta_i=0\rangle, \qquad W\cap \CCC^{I^c}=\langle Qz_i:\theta_i=\pi/2\rangle
			\]
		together with
		\[
			\lambda_W(I)=\#\{i:0<\theta_i<\pi/2\}.
		\]
		This gives us the following.
		
		\begin{prop}
			Let $Q$ be the matrix whose columns form an orthonormal basis for a subspace $W\subset \CCC^n$. For the CSD of $Q$ given by a partition $I,I^c$ of the rows, we have
			\[
				\lambda_W(I)=\#\{i:0<\theta_i<\pi/2 \text{ appearing in the CSD of }Q\}.
			\]		
			In particular, $W$ is $k$-connected if and only if every CSD of $Q$ given by a partition $I,I^c$ has at least $\min\{|I|,|I^c|,k-1\}$ angles in $(0,\pi/2)$. It is exactly $k$-connected if and only if, in addition, there exists $I$ with $|I|,|I^c|\ge k$ and exactly $k-1$ angles in $(0,\pi/2).$
		\end{prop}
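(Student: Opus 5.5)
The first claim is a direct computation from the CSD data already set up before the statement. The proof of the second and third claims comes from combining that formula with the definitions of $k$-separation and $k$-connectivity.

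\textbf{The formula for $\lambda_W(I)$.} Since $z_1,\dots,z_r$ is an orthonormal basis of $\CCC^r$ and $Q$ is an isometry onto $W$, the vectors $Qz_1,\dots,Qz_r$ form an orthonormal basis of $W$, with
\[
Qz_i=\begin{pmatrix}Q_1z_i\\ Q_2z_i\end{pmatrix},\qquad \|Q_1z_i\|=\cos\theta_i,\qquad \|Q_2z_i\|=\sin\theta_i .
\]
The plan is to verify the two descriptions $W\cap\CCC^I=\langle Qz_i:\theta_i=0\rangle$ and $W\cap\CCC^{I^c}=\langle Qz_i:\theta_i=\pi/2\rangle$. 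The inclusions $\supseteq$ are immediate: $\theta_i=0$ forces $Q_2z_i=0$, and $\theta_i=\pi/2$ forces $Q_1z_i=0$.

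For $\subseteq$, take $w=Qz\in W\cap\CCC^I$. Then $Q_2z=0$, so $Q_2^*Q_2z=0$. Expanding $z=\sum c_iz_i$ in the eigenbasis of $Q_2^*Q_2$ gives $\sum c_i\sin^2\theta_i\,z_i=0$. Hence $c_i=0$ whenever $\theta_i\neq0$, so $w\in\langle Qz_i:\theta_i=0\rangle$. The $I^c$ case is symmetric.

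Since the $Qz_i$ are linearly independent, $\dim(W\cap\CCC^I)=\#\{\theta_i=0\}$ and $\dim(W\cap\CCC^{I^c})=\#\{\theta_i=\pi/2\}$. Substituting into $\lambda_W(I)=r-\dim(W\cap\CCC^I)-\dim(W\cap\CCC^{I^c})$ leaves exactly the number of angles in $(0,\pi/2)$. This count is independent of the choice of CSD, because it equals $\lambda_W(I)$; equivalently, it is $\rank Q_1^*Q_1Q_2^*Q_2$.

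\textbf{The characterization of $k$-connectivity.} This is the step needing care; it is where Proposition~\ref{prop:j-sep-implies-k-sep} is not enough by itself and a small reformulation is needed. We show that $W$ is $k$-connected if and only if
\[
\lambda_W(I)\ge\min\{|I|,|I^c|,k-1\}\quad\text{for every } I .
\]

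Suppose $W$ is $k$-connected, and fix $I$. Let $j=\min\{|I|,|I^c|,k-1\}$; if $j=0$ there is nothing to prove. Otherwise $|I|,|I^c|\ge j$ and $j<k$, so $I$ is not a $j$-separation, which means $\lambda_W(I)\ge j$.

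Conversely, suppose the inequality holds for every $I$, and let $I$ be a $j$-separation with $j<k$. Then $|I|,|I^c|\ge j$ and $j\le k-1$, so $\min\{|I|,|I^c|,k-1\}\ge j$. This gives $\lambda_W(I)\ge j$, contradicting $\lambda_W(I)<j$.

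\textbf{Exact $k$-connectivity.} Add the existence of a $k$-separation $I$, i.e.\ $|I|,|I^c|\ge k$ with $\lambda_W(I)\le k-1$. For such $I$, $k$-connectivity gives $\lambda_W(I)\ge\min\{|I|,|I^c|,k-1\}=k-1$. So $\lambda_W(I)=k-1$ exactly, which by the first part means exactly $k-1$ angles in $(0,\pi/2)$. The converse direction is immediate from the definitions.
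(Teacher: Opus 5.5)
Your proof is correct and follows the paper's argument. The $k$-connectivity equivalence is argued exactly as in the paper: take $j=\min\{|I|,|I^c|,k-1\}$ in one direction and an $\ell$-separation with $\ell<k$ in the other, and the exact case then follows immediately. The one difference is that you actually verify $W\cap\CCC^I=\langle Qz_i:\theta_i=0\rangle$ and $W\cap\CCC^{I^c}=\langle Qz_i:\theta_i=\pi/2\rangle$ via the eigenbasis of $Q_2^*Q_2$, and you note that the angle count does not depend on the chosen CSD; the paper only asserts these facts in the discussion preceding the statement.
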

		\begin{proof}
			To prove the second part of the statement, suppose there exists a partition $I$, $I^c$ with 
			\[
				\#\{i:0<\theta_i<\pi/2 \text{ appearing in the CSD of }Q\}< \min\{|I|,|I^c|,k-1\}. 
			\]
			Let $j=\min\{|I|,|I^c|,k-1\}$ so that $|I|,|I^c|\ge j$ and $\lambda_W(I)< j$. Then $I$ is a $j$-separation with $j<k$ and hence $W$ is not $k$-connected. Conversely, if $W$ is not $k$-connected, let $I$ be an $\ell$-separation of $W$ with $\ell <k$. Then $k-1,|I|,|I^c|\ge \ell$ so $\min\{|I|,|I^c|,k-1\} \ge \ell$. Since $\lambda_W(I)<\ell\le \min\{|I|,|I^c|,k-1\}$, the CSD of $Q$ along the partition $I,I^c$ has less than $\min\{|I|,|I^c|,k-1\}$ angles in $(0,\pi/2)$.
			
			The last sentence is clear.
		\end{proof}
		
		\begin{cor}
			If $W$ is an exactly $k$-connected space and $I$ is a $k$-separation, then
			\[
				W=(W\cap \CCC^I)\oplus (W\cap \CCC^{I^c})\oplus \bigoplus_{i=1}^{k-1} \langle \cos(\theta_i)u_i+\sin(\theta_i)v_i\rangle,
			\]
			with $u_1,\dots, u_{k-1}\in (W\cap \CCC^I)^\perp\cap \CCC^I$ and $v_1,\dots, v_{k-1}\in (W\cap \CCC^{I^c})^\perp\cap \CCC^{I^c}$ orthonormal vectors, and the angles $\theta_1,\dots, \theta_{k-1}$ lie in $(0,\pi/2)$.
		\end{cor}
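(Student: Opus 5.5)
The plan is to read off the decomposition from the CSD construction just carried out, with the $k$-separation $I$ as the partition of the rows. Let $Q=\begin{pmatrix}Q_1\\ Q_2\end{pmatrix}$ have orthonormal columns spanning $W$. Take the orthonormal vectors $z_1,\dots,z_r\in\CCC^r$ and angles $\theta_1,\dots,\theta_r\in[0,\pi/2]$ produced above. The vectors $Qz_1,\dots,Qz_r$ then form an orthonormal basis of $W$, and we split the index set $[r]$ into three parts according to whether $\theta_i=0$, $\theta_i=\pi/2$, or $0<\theta_i<\pi/2$.

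\emph{Step 1: count the intermediate angles.} Since $W$ is exactly $k$-connected, it is in particular $k$-connected, so it has no $j$-separation for $j<k$. Hence $\lambda_W(I)\ge k-1$: otherwise, with $j=\lambda_W(I)+1\le k-1$, we would have $|I|,|I^c|\ge k>j$ and $\lambda_W(I)<j$, so $I$ would be a $j$-separation. On the other hand $I$ is a $k$-separation, so $\lambda_W(I)<k$. Thus $\lambda_W(I)=k-1$. By the preceding proposition, exactly $k-1$ of the angles lie in $(0,\pi/2)$, and after relabelling these are $\theta_1,\dots,\theta_{k-1}$.

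\emph{Step 2: identify the three pieces.} We already know that $W\cap\CCC^I=\langle Qz_i:\theta_i=0\rangle$ and $W\cap\CCC^{I^c}=\langle Qz_i:\theta_i=\pi/2\rangle$. For $i\le k-1$ both $\cos\theta_i$ and $\sin\theta_i$ are nonzero, so
\[
Qz_i=\begin{pmatrix}Q_1z_i\\ Q_2z_i\end{pmatrix}=\cos(\theta_i)u_i+\sin(\theta_i)v_i,
\]
with $u_i=Q_1z_i/\cos\theta_i\in\CCC^I$ and $v_i=Q_2z_i/\sin\theta_i\in\CCC^{I^c}$ unit vectors. Splitting the orthonormal basis into the three groups gives the claimed direct sum, which is in fact orthogonal.

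\emph{Step 3: orthogonality conditions.} This is the only step that needs a small computation. For $i\ne j$ we have $u_i^*u_j\propto z_i^*Q_1^*Q_1z_j=\cos^2(\theta_j)\,z_i^*z_j=0$, and likewise $v_i^*v_j=0$ using $Q_2^*Q_2$. So $u_1,\dots,u_{k-1}$ are orthonormal, and so are $v_1,\dots,v_{k-1}$. Next, take $w=Qz_j\in W\cap\CCC^I$ with $\theta_j=0$. Then $Q_2z_j=0$, so $w=Q_1z_j$ in the $\CCC^I$ coordinates, and $u_i^*Q_1z_j\propto z_i^*Q_1^*Q_1z_j=z_i^*z_j=0$ for $i\le k-1$. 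Hence $u_i\in(W\cap\CCC^I)^\perp\cap\CCC^I$. The symmetric computation with $Q_2$, using $\theta_j=\pi/2$, gives $v_i\in(W\cap\CCC^{I^c})^\perp\cap\CCC^{I^c}$.

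I expect no real obstacle here. The one point needing care is Step 1, pinning down $\lambda_W(I)=k-1$ exactly, which uses both halves of the definition of exact $k$-connectivity.
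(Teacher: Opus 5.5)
Your proposal is correct and takes essentially the paper's approach: the paper gives no separate proof and reads the corollary off the CSD construction along the partition $I,I^c$, using that exact $k$-connectivity forces $\lambda_W(I)=k-1$. You add the details the paper leaves implicit, namely the argument that $\lambda_W(I)=k-1$ and the checks that the $u_i$ and $v_i$ are orthonormal and lie in the stated orthogonal complements.
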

	
	The benefit of the decomposition above in the $k=2$ case from Section \ref{sec:Results-from-Lin-Alg} is that it allowed us to easily compute the determinantal polynomial of the orthogonal projection matrix associated to an exactly 2-connected subspace. However, doing the same in the $k>2$ case is not so straightforward for $k\ge 3$ and we will not attempt to do so in this paper.

 \subsection{The Structure of $\infty$-Connected Subspaces}\label{sec:structure-of-max-conn-subspaces}\label{subsec:infty-conn-subs}
 	Every subspace $W$ is $k$-connected for some $k\ge 1$ and we understand the structure of exactly $k$-connected subspaces along a $k$-separation. The last stratum to consider are the $\infty$-connected ones. These are very combinatorial and can only occur in very specific dimensions. The results below are well-known and can be found in \cite{Oxley2011}, but we include them here for completeness.

 	\begin{lem}
 		If $W$ is $\infty$-connected, then $\lfloor n/2\rfloor\le \dim W\le \lceil n/2 \rceil$.
 		\label{lem:dim-of-max-conn-subspace}
 	\end{lem}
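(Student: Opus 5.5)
We argue by contrapositive: if $\dim W$ lies outside $[\lfloor n/2\rfloor,\lceil n/2\rceil]$, we exhibit a $k$-separation of $W$ for some $k$, so $W$ is not $\infty$-connected. Write $r=\dim W$ and $m=\min\{r,n-r\}$. If $r<\lfloor n/2\rfloor$ or $r>\lceil n/2\rceil$, then $n-r<\lfloor n/2\rfloor$ in the second case, so in both cases $m<\lfloor n/2\rfloor$.

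The key step is the bound $\lambda_W(I)\le m$ for every $I\subset[n]$. Using $\lambda_W(I)=\dim\pi_I(W)+\dim\pi_{I^c}(W)-r$, we bound each projection in two ways:
\begin{align*}
\lambda_W(I) &\le r+r-r=r,\\
\lambda_W(I) &\le |I|+|I^c|-r=n-r.
\end{align*}
Hence $\lambda_W(I)\le m$. This is the same computation as in the proof of Theorem \ref{thm:open-stratum}, but it does not use Lemma \ref{lem:infty-conn-matroids-are-uniform}, so the argument is not circular.

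Now choose any $I$ with $|I|=m+1$. Since $m+1\le\lfloor n/2\rfloor$, we get $|I^c|=n-m-1\ge n-\lfloor n/2\rfloor\ge m+1$. Together with $\lambda_W(I)\le m<m+1$, this makes $I$ an $(m+1)$-separation. So $W$ is not $(m+2)$-connected, and in particular not $\infty$-connected.

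The only delicate point is the degenerate case $r=0$ (or $r=n$), where $m=0$. There $\lambda_W\equiv 0$, and any $I$ with $|I|=1$ is a $1$-separation once $n\ge 2$. For $n\le 1$ the claimed inequality holds trivially. Beyond this, the argument is a direct counting check with no real obstacle.
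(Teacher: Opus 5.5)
Your proof is correct. It is essentially the contrapositive of the paper's argument rather than a new idea.

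Both proofs rest on the same two ingredients:
\begin{itemize}
\item the identity $\lambda_W(I)=\dim\pi_I(W)+\dim\pi_{I^c}(W)-r$;
\item the trivial bound $\dim\pi_I(W)\le\min\{|I|,r\}$.
\end{itemize}

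The paper argues directly. $\infty$-connectivity forces $\lambda_W(I)\ge\min\{|I|,|I^c|\}$. For $|I|\le n/2$ the chain
$|I|\le\lambda_W(I)=\dim\pi_I(W)-\dim(W\cap\CCC^I)\le\dim\pi_I(W)\le|I|$
collapses. This gives $|I|=\dim\pi_I(W)\le r$, hence $r\ge\lfloor n/2\rfloor$. Substituting back gives $r=\dim\pi_{I^c}(W)\le|I^c|$, hence $r\le\lceil n/2\rceil$.

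You instead prove the global bound $\lambda_W\le\min\{r,n-r\}$ and exhibit an explicit $(m+1)$-separation. Your version is symmetric in $r$ and $n-r$ and names the separation witnessing failure. The paper's version has a by-product: $\dim\pi_I(W)=|I|$ and $W\cap\CCC^I=0$ whenever $|I|\le\lfloor n/2\rfloor$. The paper immediately reuses this to show that $\infty$-connected subspaces are uniform. With your route that fact would have to be derived separately.

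Your separate treatment of $r\in\{0,n\}$ is unnecessary. With $m=0$ the general argument already produces a $1$-separation whenever $n\ge 2$. For $n\le 1$ the condition $m<\lfloor n/2\rfloor$ can never hold.
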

 	\begin{proof}
 		Suppose $W$ is $\infty$-connected. Then $\lambda_W(I)\ge \min\{|I|,|I^c|\}$ since otherwise we would have a separation.
 		Let $I\subset [n]$ be such that $|I|\le n/2$. Then
 		\begin{align*}
	 		|I| &=\lambda_W(I)
	 		=\dim W-\dim(W\cap \CCC^{I^c})-\dim(W\cap \CCC^I)\\
	 		&=\dim \pi_I(W)-\dim(W\cap \CCC^I)\le \dim \pi_I(W)\le |I|
	 	\end{align*}
	 	Hence, 
	 	$
	 		|I|=\dim \pi_I(W)\le \dim W
	 	$
	 	for any $|I|\le n/2$, so
	 	\[
	 		\lfloor n/2\rfloor\le \dim W.
	 	\]
	 	For the same $I$, this also gives
	 	\begin{align*}
	 		|I| &=\lambda_W(I)=\dim W-\dim (W\cap \CCC^{I})-\dim(W\cap \CCC^{I^c})\\
	 		&=\dim \pi_I(W)+\dim \pi_{I^c}(W)-\dim W\\
	 		&=|I|+\dim \pi_{I^c}(W)-\dim W.
	 	\end{align*}
	 	It follows that $\dim W=\dim \pi_{I^c}(W)\le |I^c|$ for any $|I^c|\ge n/2$. Thus, $\dim W\le \lceil n/2\rceil$ concluding the proof.
 	\end{proof}

 	The proof of this lemma shows that for a $\infty$-connected subspace $W$, we must have $\dim \pi_I(W)=|I|$ for any $|I|\le \lfloor n/2\rfloor$. 
 	The bounds on the dimension gives us the following cases to consider for $\ell$ a positive integer:
 	 	
 	\vspace{0.25cm}
 	
 	\emph{Case 1: $n=2\ell$.} In this case, $\dim W=n/2=\ell$ and $|I|=\dim \pi_I(W)$ for all $|I|\le \ell$. If $|I|>\ell$, then by dimensionality we must have $\dim \pi_I(W)=\ell$. Hence, $W$ is a uniform subspace of dimension $\ell$ inside $\CCC^{2\ell}$.
 	
 	\vspace{0.25cm}
 	
 	\emph{Case 2: $n=2\ell+1$ and $\dim W= \ell$.} As before, $\dim \pi_I(W)=|I|$ for all $|I|\le \lfloor n/2\rfloor=\ell$ so $W$ is uniform of dimension $\ell$.
 	
 	\vspace{0.25cm}
 	
 	\emph{Case 3: $n=2\ell+1$ and $\dim W=\ell+1$.} We already know that $\dim \pi_I(W)=|I|$ for all $|I|\le \ell$. If $|I|=\ell+1$, then $|I^c|=\ell$ so
 	\[
 		\ell=\lambda_W(I^c)=\dim \pi_I(W)+\ell-(\ell+1)=\dim \pi_I(W)-1.
 	\]
 	So $\dim \pi_I(W)=\ell+1=|I|$ so once again, $W$ is uniform.
 	
 	The converse is also true:
 	
 	\begin{thm}
 		Let $W\subset \CCC^n$ be a subspace. Then $W$ is $\infty$-connected if and only if it is uniform and one of the following dimensionality conditions holds for $\ell\in \ZZZ_{\ge 0}$:
 		\begin{enumerate}
 			\item $n=2\ell$, $\dim W=\ell$;
 			\item $n=2\ell+1$, $\dim W=\ell$;
 			\item $n=2\ell+1$, $\dim W=\ell+1$.
 		\end{enumerate}
 		In all cases, if $I\subset [n]$ is such that $|I|\le |I^c|$, then $\pi_I(W)=\CCC^I$ and $W\cap \CCC^I=0$.
 		\label{thm:infty-conn-matroids-are-uniform}
 	\end{thm}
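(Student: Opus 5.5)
The forward direction is already established by the discussion preceding the statement. Lemma \ref{lem:dim-of-max-conn-subspace} restricts $\dim W$ to $\lfloor n/2\rfloor\le\dim W\le\lceil n/2\rceil$, which gives exactly the three dimensional cases. Cases 1--3 then show that $W$ is uniform in each. So the work lies in the converse and in the final sentence about $\pi_I(W)=\CCC^I$ and $W\cap\CCC^I=0$. I would prove the final sentence first, for uniform $W$ of the given dimensions, and then deduce $\infty$-connectivity from it.

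Let $W$ be uniform with $r=\dim W$ in one of the three cases, and take $I$ with $|I|\le|I^c|$, so $|I|\le\lfloor n/2\rfloor\le r$. Uniformity gives $\dim\pi_I(W)=|I|$, that is, $\pi_I(W)=\CCC^I$.

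For the intersection, use $\dim(W\cap\CCC^I)=r-\dim\pi_{I^c}(W)$, which follows from the kernel computation in the definition of $\lambda_W$. I need $\dim\pi_{I^c}(W)=r$. Here $|I^c|\ge\lceil n/2\rceil\ge r$. Pick $J\subset I^c$ with $|J|=r$. Uniformity gives $\dim\pi_J(W)=r$, hence $\dim\pi_{I^c}(W)\ge r$, and it is at most $r$. Therefore $W\cap\CCC^I=0$. The inequality $\lceil n/2\rceil\ge r$ is exactly where the dimension hypotheses enter. This is the only delicate point: it must be checked in each of the three cases, and in case 3 it holds with equality, $r=\ell+1=\lceil n/2\rceil$.

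Now compute $\lambda_W(I)$ for any $I$, assuming by symmetry of $\lambda_W$ that $|I|\le|I^c|$. Using $\lambda_W(I)=\dim\pi_I(W)+\dim\pi_{I^c}(W)-r$ gives
\[
\lambda_W(I)=|I|+r-r=|I|=\min\{|I|,|I^c|\}.
\]
If $I$ were a $k$-separation, we would have $|I|,|I^c|\ge k$ and $\lambda_W(I)<k$, which contradicts $\lambda_W(I)=\min\{|I|,|I^c|\}\ge k$. So $W$ has no $k$-separation for any $k$, and $W$ is $\infty$-connected.

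For completeness of the "in all cases" clause under the forward hypothesis, the same computation applies once uniformity and the dimension conditions are known, and the forward direction supplies both. The degenerate case $\ell=0$ is trivial.
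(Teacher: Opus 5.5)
Your proof is correct and follows essentially the paper's argument: both compute $\lambda_W(I)=\dim\pi_I(W)+\dim\pi_{I^c}(W)-r$ from uniformity. The paper obtains $\min\{r,n-r,|I|,|I^c|\}$, which in these three dimension cases reduces to your $\min\{|I|,|I^c|\}$. The differences are small but worthwhile: you prove the final clause $\pi_I(W)=\CCC^I$, $W\cap\CCC^I=0$ explicitly, which the paper's written proof leaves implicit, and your direct formula makes the appeal to Proposition \ref{prop:j-sep-implies-k-sep} unnecessary.
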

 	\begin{proof}
 		We have shown above that if $W$ is $\infty$-connected, then it must be of the form described.
 		To prove the converse, suppose $W$ is of the form above. We note that $W$ is uniform of dimension $r$ if and only if $\dim \pi_I(W)=\min\{|I|,r\}$ for all $I\subset [n]$. Then
 		\begin{align*}
 			\lambda_W(I) &= \dim \pi_I(W)+\dim \pi_{I^c}(W)-r\\
 			&=\min\{r,|I|\}+\min\{r,|I^c|\}-r\\
 			&=\min\{r,n-r,|I|,|I^c|\}.
 		\end{align*}
 		By Proposition \ref{prop:j-sep-implies-k-sep}, we only need to show there are no $k$-separations for $k\le n/2$. For $(r,n)=(\ell,2\ell),(\ell,2\ell+1),(\ell+1,2\ell+1)$ this means $k\le \ell$. Note that, for all such $(r,n)$, we have $\min\{r,n-r\}\ge \ell\ge k$. Hence, for any $I$ with $|I|,|I^c|\ge k$, we have
 		\[
 			\lambda_W(I)=\min\{r,n-r,|I|,|I^c|\}\ge k
 		\]
 		so that $I$ cannot be a $k$-separation. Therefore, $W$ is $\infty$-connected.
 	\end{proof}
 	
\bibliographystyle{plain}
\bibliography{references}

\end{document}